\documentclass[hidelinks,12pt]{amsart}
\usepackage{tikz,hyperref,stmaryrd,a4wide,amssymb,enumerate,amstext,vmargin,euscript,amssymb,amsfonts,amsmath,hyperref,faktor,bbold,graphicx,enumitem,upgreek}
\usetikzlibrary{arrows.meta}
\usetikzlibrary{calc} 
\usepackage{graphicx}
\usepackage{indentfirst}
\usepackage{faktor} 
\usepackage{dsfont}
\usepackage{listings}
\usepackage{xcolor}
\usepackage{color}
\usepackage[utf8]{inputenc}
\usepackage[all]{xy}
\usepackage[dvipsnames]{xcolor}
\usepackage[
backend=biber,
style=alphabetic,
sorting=nyt
]{biblatex}

\title[A Graph-Based Method for Invariant Densities]{A Graph-Based Method for Invariant Densities of Multidimensional Continued Fractions}

 \author[D.~Siukaev]{David Siukaev}

\date{\today}

\keywords{Continued fractions; invariant density; simplicial systems; win-lose induction; natural extension}
\subjclass[2010]{}

\newtheorem{lemma}{Lemma}[section]

\newtheorem{conjecture}[lemma]{Conjecture}
\newtheorem{proposition}[lemma]{Proposition}
\newtheorem{corollary}[lemma]{Corollary}
\theoremstyle{remark}

\newtheorem{remark}[lemma]{Remark}

\newtheorem{definition}[lemma]{Definition}

\numberwithin{equation}{section}

\DeclareMathOperator{\Id}{Id}
\DeclareMathOperator{\Leb}{Leb}
\DeclareMathOperator{\sgn}{sgn}

\address{Université Sorbonne Paris Nord, LAGA, CNRS, UMR 7539, F--93430, Villetaneuse, France}
\email{siukaev@math.univ-paris13.fr}

\begin{document} 

\begin{abstract}
     Multidimensional continued fraction algorithms generalize the classical continued fraction setting, which includes both the Euclidean algorithm and the Gauss  algorithm (its projectivized version). We propose a novel method for computing invariant densities of certain multidimensional continued fraction algorithms. Inspired by Rauzy induction, our approach builds on the formalism of simplicial systems developed by Fougeron. We introduce a win-lose induction on a graph that is conjugate to the original algorithm, and construct its natural extension by introducing the notion of a dual graph. This method explicitly reconstructs the complete dynamics of the algorithm, yielding a partition of the invariant domain of the natural extension into pieces that map to one another. We further study the ergodic properties of the algorithms within this framework; in particular, we prove that the Modified Triangle algorithm in any dimension admits a unique ergodic measure equivalent to the Lebesgue measure.
\end{abstract}

\maketitle
\tableofcontents

\section{Introduction}

To estimate the dynamics of a \textit{multidimensional continued fraction 
(MCF) algorithm}, it is crucial to establish the existence of an ergodic 
invariant measure absolutely continuous with respect to the Lebesgue measure 
and, when possible, to obtain an explicit expression for it; see Lagarias' conditions~\cite[$\mathsection$~4]{lagarias} and Oseledets~\cite{oseledets} for the precise role of such a measure in the study of Diophantine approximation and the convergence of MCF algorithms.

A \textit{linear} (homogeneous) $d$-dimensional MCF algorithm acts on the positive cone $\mathbb{R}^{d+1}_+$ (or a subcone thereof) and can be locally given as a map
$$F: \textbf{x} \mapsto M^{-1}\cdot \textbf{x},$$
where $M$ is a non-negative integer matrix that depends on the region of $\mathbb{R}^{d+1}_+$ containing $\textbf{x}$ (see Section~\ref{sec:mcf} for a general definition). The associated \textit{projective} (non-homogeneous) version
$$f: \textbf{x} \mapsto \frac{M^{-1}\cdot \textbf{x}}{\|M^{-1}\cdot \textbf{x}\|}$$
acts on the standard simplex $\Delta^d := \big\{\textbf{x} \in \mathbb{R}^{d+1}_+\;\big|\; \sum_{i=0}^dx_i=1\big\}$.

To find the invariant measure of a particular MCF algorithm, a classical and powerful approach consists in constructing its \textit{natural extension}, a notion introduced by Rohlin~\cite{rohlin}. Recall that the natural extension of a measurable dynamical system $(X, T)$ is the minimal invertible dynamical system $(\hat{X}, \hat{T})$ of which $(X, T)$ is a factor under a surjective measure-preserving map; it is unique up to metric isomorphism~\cite{rohlin}. For continued fraction maps, the natural extension often admits a geometric model as a skew-product map on a product of cones, which makes it possible to read off the invariant density explicitly; see~\cite[$\mathsection$~3]{arnouxlabbe}  and~\cite[$\mathsection$~4]{arnouxschmidt}.

This strategy was developed in the one-dimensional setting by Arnoux and Schmidt\allowbreak~\cite{arnouxschmidt}, who introduced a heuristic method based on an iterated function system to construct planar models of natural extensions for piecewise homographic continued fraction maps, recovering classical results such as the Gauss measure and its natural extension due to Nakada--Ito--Tanaka~\cite{nakadaItotanaka}, and computing the invariant densities for several other maps. Their approach shows that if a naturally associated two-dimensional system has positive Lebesgue measure, then it is the natural extension of the original map, and its invariant measure is the marginal of the two-dimensional system.

In the multidimensional setting, Arnoux and Nogueira~\cite{arnouxnogueira} proposed a geometric framework for constructing natural extensions of MCF algorithms. Their method was further developed and systematized by Arnoux and Labbé~\cite{arnouxlabbe}, who construct the natural extension $\widehat{F}$ of the map $F$ as a skew-product acting on a product of positive cones:
$$\widehat{F}: (\textbf{x},\textbf{a}) \mapsto (M^{-1} \cdot \textbf{x},\; M^{T} \cdot \textbf{a}).$$
They find an invariant domain $D_{\mathrm{AL}} \subset \mathbb{R}_+^{d+1}\times\mathbb{R}_+^{d+1}$ and a partition
\begin{equation}\tag{1}\label{eq:0}
    D_{\mathrm{AL}} = \mathbb{R}_+^{d+1}\times\bigg[\bigsqcup_i\Lambda_i^*\bigg] = \bigg[\bigsqcup_j\Lambda_j\bigg]\times\Lambda^*\quad\quad\big(\mathbb{R}_{+}^{d+1} = \bigsqcup_j\Lambda_j\big)
\end{equation}
such that the restriction $\widehat{F}\big|_{D_{\mathrm{AL}}}$ is a bijection and $\widehat{F}\big(\Lambda_j\times\Lambda^*\big) = \mathbb{R}_+^{d+1}\times\Lambda_i^*$. The invariant measure for the projective MCF algorithm $f$ is then obtained by projecting the Lebesgue measure on $\mathbb{R}_+^{d+1}\times\mathbb{R}_+^{d+1}$ to $\Delta^d$ via the flow $\varphi_t: (\textbf{x},\textbf{a})\mapsto(e^t\textbf{x},e^{-t}\textbf{a})$; see~\cite[$\mathsection$~3]{arnouxlabbe}. Using this approach, Arnoux and Labbé compute the invariant densities for the Brun, Reverse, and Cassaigne algorithms explicitly, and discuss the difficulties encountered for other algorithms such as Jacobi--Perron and Arnoux--Rauzy--Poincaré, for which the invariant domain appears to have a fractal structure. Since the natural extension is unique up to metric isomorphism, any alternative construction that is metrically isomorphic to $\widehat{F}$ yields the same invariant measure; the interest of finding a different model lies in the possibility of simplifying the problem of identifying the invariant domain.

While MCF algorithms are typically defined on the positive cone or, in the projective case, on a simplex, the Rauzy–Veech induction, originally defined for interval exchange transformations (see~\cite{Rauzy1979, Veech1982}), operates on multiple copies of these domains, each labeled by a combinatorial datum. This construction generalizes to the broader notion of a \textit{Rauzy induction type algorithm}; see~\cite{chaikanogueiraerg}. The key observation is that in this setting, each copy of the domain can be associated with a vertex of a graph, and the induction rule describes transitions along the edges. The formalism of \textit{simplicial systems}, developed in~\cite{fougeronpp}, provides a rigorous framework connecting MCF algorithms to this graph-theoretic structure: given an MCF algorithm $F$, one constructs a labeled directed graph — the simplicial system — together with an algorithm on the graph, called the \textit{win-lose induction}, which is conjugate to $F$. Throughout this paper, we also refer to our approach as the \textit{graph method}, since a simplicial system is precisely a labeled graph associated with an MCF algorithm; we adopt this terminology consistently, without repeating the identification each time.

An important feature of the simplicial systems formalism is that it allows one to study the ergodic properties of MCF algorithms from a combinatorial point of view. In particular, the graph structure encodes information about the invariant measure and the ergodic behavior of the algorithm, making it possible to reduce analytic questions to combinatorial ones. This combinatorial perspective is one of the principal interests of the approach developed in this paper.

We construct a natural extension for the win-lose induction by introducing a \textit{dual graph}. As we will see in Section~\ref{sec:measure}, this extension is a bijection on the entire domain. However, the constructed extension does not automatically preserve the standard scalar product on $\mathbb{R}_+^{d+1}$. Since the win-lose induction operates on multiple copies of the positive cone, each associated with a vertex of the graph, one has the freedom to assign an independent bilinear form matrix to each vertex. Our approach consists in finding a labeling of the dual graph and a collection of bilinear form matrices, one for each vertex, such that the modified bilinear form is preserved under the win-lose induction. The preserved bilinear form matrices then correspond, via the connection to $\widehat{F}$ established in Section~\ref{sec:measure}, to the pieces of the partition~\eqref{eq:0} of the invariant domain $D_{\mathrm{AL}}$, which are mapped to one another under the algorithm. Thus, the graph method reduces the problem of finding the invariant domain to that of finding a compatible dual labeling — a purely combinatorial datum — rather than directly searching for the invariant domain in the product of cones, as developed in~\cite{arnouxschmidt, arnouxlabbe}. This method explicitly reconstructs the complete dynamics of the algorithm and yields a partition of the invariant domain into pieces, each corresponding to a specific vertex of the simplicial system.

Let us briefly describe the contents of this paper. In Section~\ref{sec:definitions}, we introduce the notion of a simplicial system and the win-lose induction. In Section~\ref{sec:measure}, we construct a natural extension for the win-lose induction via a dual graph, establish its connection to the natural extension of~\cite{arnouxnogueira, arnouxlabbe}, and explain how to derive invariant measures for both the linear and projective win-lose inductions. In Section~\ref{sec:mcf}, we recall the general definition of (non-ordered) MCF algorithms. In Section~\ref{sec:examples}, we apply the method to the Farey, Gauss, Selmer, and Triangle algorithms: we construct the corresponding simplicial systems and compute their invariant measures. In Section~\ref{sec:extss}, we introduce the notion of an extended simplicial system, construct it for the Brun and Modified Triangle algorithms, and compute invariant measures for the Brun algorithm in dimensions $d = 2$ and $d = 3$. We conclude the section by remarking that the Modified Triangle algorithm exhibits a markedly more intricate structure. As discussed in Section~\ref{sec:fractdomain}, we conjecture that the natural invariant domain for this algorithm possesses a fractal structure, in contrast to the simplex domains arising in the classical Triangle and Selmer cases. An analysis of this fractal domain, together with a computation of the corresponding invariant measure, is deferred to an upcoming paper, which will examine cases where the 
graph method yields extended simplicial systems with fractal invariant domains. Finally, in Section~\ref{sec:ergodicity}, we address ergodicity of the invariant measures constructed for each algorithm. For the Triangle algorithm, we prove that the invariant measure constructed in the present paper is the unique ergodic invariant measure equivalent to the Lebesgue measure. We note that this invariant measure coincides with the one recently obtained independently by Garrity and Lehmann Duke~\cite[$\mathsection$~6]{garrityerg} via a substantially different approach based on transfer operators, and motivated primarily by connections to the theory of integer partitions. For the Modified Triangle algorithm, we establish ergodicity of the invariant measure in arbitrary dimension.

\subsection*{Acknowledgments}

The author is deeply grateful to Charles Fougeron, Valérie Berthé, and 
Julien Barral for numerous long and fruitful discussions, without which 
this work would not have been possible. The author also thanks Thomas 
Garrity for explaining his approach to the study of the Triangle algorithm.

This work was supported by the FMSP grant funded by Qube Reseach \& Technologies, and partly by the 
ERC grant DynAMiCs (101167561) of the European Research Council.

\section{Definitions}\label{sec:definitions}

We follow Fougeron's ideas \cite{fougeronpp} and construct for a given MCF algorithm a graph called a \textit{simplicial system}, and then associate the MCF algorithm with a random walk on this graph. The vertices of the graph are copies of the simplex $\Delta^d$ in the projective (non-homogeneous) case, and copies of $\mathbb{R}_+^{d+1}$ in the linear (homogeneous) case. Its edges are directed and labeled (usually by numbers from $0$ to $d$). Following a particular edge (we say that this edge \textit{loses}) means subtracting the corresponding coordinate from all the coordinates which correspond to the other outgoing edges of the same vertex (we say that these edges \textit{win}). 

\begin{figure}
    \begin{minipage}[ht]{0.49\linewidth}
        \center{\begin{tikzpicture}[node distance={40mm}, > = Stealth, very thick, main/.style = {draw, circle}] 
        \node[main] (1) {$v$}; 
        \node[main] (2) [right of=1] {$v'$};
        \node (3) [below of=1] {$ $};
        \node (4) [below of=2] {$ $};
        \node (5) [below left of=1] {$ $};
        \draw[Apricot, arrows={->[scale=1]}, Apricot] (1) -- node[midway, below] {$l(e_1)$} (2);
        \draw[arrows={->[scale=1]}] (1) -- node[midway, left] {$l(e_2)$} (3);
        \draw[arrows={->[scale=1]}] (1) -- node[midway, right] {$l(e_3)$} (4);
        \draw[arrows={->[scale=1]}] (1) -- node[midway, left] {$l(e_4)$} (5);
        \end{tikzpicture}}
    \end{minipage}
    \hfill
    \begin{minipage}[ht]{0.49\linewidth}
        $v_{\text{out}} = \{e_1, e_2, e_3, e_4\}$; \\
        $ $ \\
        Along the edge from $v$ to $v'$:\\
        $l(e_1)$ \textit{loses}, where $e_1: v \to v'$;\\
        $l(e_2), l(e_3)$ and $l(e_4)$ \textit{win}; \\
        $v'_{l(e_i)} = v_{l(e_i)}-v_{l(e_1)},\quad i=2,3,4$.
    \end{minipage}
    \caption{The step of the win-lose induction along the edge $e_1: v \to v'$.}
    \label{fig:examplewlind}
\end{figure}

The terminology of winners and losers between the edges gives us the name \textit{win-lose induction}, which is inspired, as is all this approach, by the notion of \textit{Rauzy-Veech induction} (see \cite{rauzy}). This induction is a type of an algorithm which can be considered as an action on a graph (called the Rauzy graph), quite similar to the graph we informally defined before: the vertices are irreducible permutations of $d+1$ elements, and the edges are labeled according to the algorithm by labels of the coordinates. Note that the Gauss algorithm is both a MCF and a Rauzy-Veech induction at the same time (see \cite[Fig.~1]{fougeronpp} and Section~\ref{ssec:gauss}).

Let $G = (V,E)$ be a graph with a finite number of vertices and edges, and let the function 
$$l: E \rightarrow \{0,\ldots,d\}$$ 
map the edges to numbers in $\{0,\ldots,d\}$ (in the $d$-dimensional case). We require that this function be injective on $v_{\text{out}}$, the set of outgoing edges of any vertex $v\in V$. Let $e \in v_{\text{out}}$. Consider the partition (up to a set of measure zero) of the simplex
$$\Updelta_e^{d}=\Big\{(x_i)_{i=0,\ldots, d}\in\Updelta^{d}\;|\; x_{l(e)} < x_j \;\;\forall j\in l(v_{\text{out}}), j \neq l(e)\Big\}.$$
Consider the transition matrix $M_e = \Id + \sum\limits_{\substack{j\in l(v_{\text{out}})\\j\neq l(e)}}E_{j,l(e)}$. Note that 
$$M^{-1}_e\cdot(\mathbb{R}_+\cdot\Delta^d_e) = \mathbb{R}_+^{d+1}.$$
Let
$$\widetilde{T}: V\times\mathbb{R}_+^{d+1} \rightarrow V\times\mathbb{R}_+^{d+1}$$ 
be the map such that
$$\displaystyle \widetilde{T}(v, \textbf{x})=(v', \widetilde{T}_e(\textbf{x}
)), \;\; \widetilde{T}_e: \begin{cases}
\mathbb{R}_+\cdot\Updelta_e^{d} \rightarrow \mathbb{R}_+^{d+1} \\ 
\textbf{x} \mapsto M^{-1}_e \cdot \textbf{x}
\end{cases} \text{for} \;\; \textbf{x}\in\mathbb{R}_+\cdot\Updelta_e^{d}, \; e: v \rightarrow v'.$$
If $|v_{\text{out}}|>1$, we say that the label $i \in \{0,\ldots,d\}$ \textit{loses} on the edge $e \in v_{\text{out}}$ if $l(e) = i$ and \textit{wins} on the edge $e$ if there exists $e'\in v_{\text{out}}$ \; such that \; $l(e) \neq l(e')=i$.
\begin{definition}\label{def:ss}
    \cite[$\mathsection$~2.1]{fougeronpp}
    The graph $G$ is called a \textit{simplicial system}, and the algorithm $\widehat{T}$ is called an associated \textit{linear} (or homogeneous) \textit{win-lose induction}. 
    
    We are also able to define a \textit{projective} (or non-homogeneous) win-lose induction $T: V\times\Updelta^d \rightarrow V\times\Updelta^d$ on the same graph:
    $$\displaystyle T(v, \textbf{x})=(v', T_e(\textbf{x})), \quad\quad T_e: \begin{cases}
    \Updelta_e^{d} \rightarrow \Updelta^d \\ 
    \textbf{x} \mapsto \frac{M^{-1}_e \cdot \textbf{x}}{\|M^{-1}_e \cdot \textbf{x}\|_1}
    \end{cases} \text{for} \;\; \textbf{x}\in\Updelta_e^{d}, \; e: v \rightarrow v'.$$
\end{definition}

\begin{remark}
    We use the notation $\Delta^d_{i(j_1,\ldots,j_k)} := \{\textbf{x}\in\Delta^d|\; x_i<x_{j_m},\;m=1,\ldots, k\}$.
\end{remark}

Figure~\ref{fig:wlind2out} illustrates the choice of the edge we follow depending on the position of the vector in the simplex. In this example, $d=2$ and we start from a vertex with $2$ outgoing edges.

\begin{figure}
    \begin{minipage}[ht]{0.49\linewidth}
        \center{\begin{tikzpicture}[node distance={40mm}, > = Stealth, very thick, main/.style = {draw, circle}] 
        \node[main][fill=lightgray] (1) {$\Delta^{d}$}; 
        \node[main][fill=lightgray] (2) [right of=1] {$\Delta^{d}$};
        \node[main][fill=lightgray] (3) [below of=1] {$\Delta^{d}$};
        \draw[SpringGreen, arrows={->[scale=1]}, SpringGreen] (1) -- node[midway, below] {$1$} (2);
        \draw[Apricot, arrows={->[scale=1]}, Apricot] (1) -- node[midway, left] {$2$} (3);
        \end{tikzpicture}}
    \end{minipage}
    \hfill
    \begin{minipage}[ht]{0.49\linewidth}
        \center{\begin{tikzpicture}[yscale=0.7,xscale=0.7]
        \fill[Apricot] (-2,0)--(0,0)--(0,3)--cycle;
        \fill[SpringGreen] (2,0)--(0,0)--(0,3)--cycle;
        \fill[lightgray] (4,0)--(6,3)--(8,0)--cycle;
        \node at (-1,0.3){$1>2$};
        \node at (1,0.3){$1<2$};
        \node at (-2,0)[below left]{$1$};
        \node at (2,0)[below right]{$2$};
        \node at (0,3)[above]{$3$};
        \draw(-2,0)--(0,3)--(2,0)--cycle;
        \draw(0,3)--(0,0);
        \draw(4,0)--(6,3)--(8,0)--cycle;
        \draw[->] (1.5,1.5)-- node[midway, above] {$T\big|_{\Delta^{d}}$} (4.5,1.5);
        \node at (2.5,3.5)[right]{$\Delta^{d}_{1(2)}$};
        \draw[->] (2.5,3.5) to [out=180, in=40, looseness=1] (0.5,1);
        \node at (-2.5,3.5)[left]{$\Delta^{d}_{2(1)}$};
        \draw[->] (-2.5,3.5) to [out=0, in=140, looseness=1] (-0.5,1);
        \node at (6,1){$\Delta^{d}$};
        \end{tikzpicture}   \\}
    \end{minipage}
    \caption{A step of the win-lose induction from a vertex with $2$ outgoing edges.}
    \label{fig:wlind2out}
\end{figure}

Figure~\ref{fig:wlind3out} is an example for $d=2$ again, but we start from a vertex with $3$ outgoing edges.

\begin{figure}
    \begin{minipage}[ht]{0.49\linewidth}
        \center{\begin{tikzpicture}[node distance={40mm}, > = Stealth, very thick, main/.style = {draw, circle}] 
        \node[main][fill=lightgray] (1) {$\Delta^{d}$}; 
        \node[main][fill=lightgray] (2) [right of=1] {$\Delta^{d}$};
        \node[main][fill=lightgray] (3) [below of=1] {$\Delta^{d}$};
        \node[main][fill=lightgray] (4) [below right of=1] {$\Delta^{d}$};
        \draw[SpringGreen, arrows={->[scale=1]}, SpringGreen] (1) -- node[midway, below] {$1$} (2);
        \draw[violet, arrows={->[scale=1]}, violet] (1) -- node[midway, left] {$2$} (3);
        \draw[Apricot, arrows={->[scale=1]}, Apricot] (1) -- node[midway, below left] {$3$} (4);
        \end{tikzpicture}}
    \end{minipage}
    \hfill
    \begin{minipage}[ht]{0.49\linewidth}
        \center{\begin{tikzpicture}[yscale=0.7,xscale=0.7]
        \fill[violet] (-2,0)--(0,1)--(0,3)--cycle;
        \fill[SpringGreen] (2,0)--(0,1)--(0,3)--cycle;
        \fill[Apricot] (-2,0)--(0,1)--(2,0)--cycle;
        \fill[lightgray] (4,0)--(6,3)--(8,0)--cycle;
        \node at (-2,0)[below left]{$1$};
        \node at (2,0)[below right]{$2$};
        \node at (0,3)[above]{$3$};
        \draw(-2,0)--(0,3)--(2,0)--cycle;
        \draw(0,3)--(0,1);
        \draw(-2,0)--(0,1);
        \draw(2,0)--(0,1);
        \draw(4,0)--(6,3)--(8,0)--cycle;
        \draw[->] (1.5,1.5)-- node[midway, above] {$T\big|_{\Delta^{d}}$} (4.5,1.5);
        \node at (2.5,3.5)[right]{$\Delta^{d}_{1(2,3)}$};
        \draw[->] (2.5,3.5) to [out=180, in=40, looseness=1] (0.5,1);
        \node at (-2.5,3.5)[left]{$\Delta^{d}_{2(1,3)}$};
        \draw[->] (0,-0.5) to (0,0.5);
        \node at (0,-0.5)[below]{$\Delta^{d}_{3(1,2)}$};
        \draw[->] (-2.5,3.5) to [out=0, in=140, looseness=1] (-0.5,1);
        \node at (6,1){$\Delta^{d}$};
        \end{tikzpicture}   \\}
    \end{minipage}
    \caption{A step of the win-lose induction from a vertex with $3$ outgoing edges.}
    \label{fig:wlind3out}
\end{figure}

\begin{remark}
    In the case of a linear win-lose induction, there is a copy of the cone $\mathbb{R}_+^{d+1}$ in each vertex instead of $\Updelta^d$ for the projective version.
\end{remark}

\section{Constructing the measure from the dual graph}\label{sec:measure}

We define an invariant measure and a density for algorithms on simplicial systems. For the simplicial system $G=(V,E)$ and the associated win-lose induction $T$ (linear or projective), we say that 
$$\mu: V \times \mathcal{B}\big(\Delta\big) \to \mathbb{R}_+,\quad \text{where $\Delta$ is a parameter space and $\mathcal{B}\big(\Delta\big) :=\{A|\;A\subset \Delta\}$},$$
is an \textit{invariant measure} if $\mu\big(T^{-1}(X)\big) = \mu (X)$ for any measurable set $X\subset V\times\Delta$ and is \textit{pseudo-homogeneous} if $\mu(\alpha X) = \mu(X)$ for any $\alpha\in\mathbb{R}_+$, where 
$$\alpha X = \big\{(v,\alpha \textbf{x}) \in V \times \Delta|\;\;(v,\textbf{x}) \in X\big\}.$$ 
If an invariant measure is absolutely continuous with respect to the Lebesgue measure, then its density function $h: V \times \Delta\to \mathbb{R}_+$ is called an \textit{invariant density}.
\subsection{Dual graph and natural extension}\label{sec:dualss}
Let us consider the simplicial system $G$ and the corresponding linear win-lose induction $\widetilde{T}$, i.e.,
$$\widetilde{T}(v, \textbf{x})=(v', \widetilde{T}_e(\textbf{x})).$$
This map is a priori not invertible, so we want to construct an invertible extension $\widehat{T}$ of $\widetilde{T}$.  We define the \textit{dual} (for $G$) simplicial system $S = (V,E')$ as a directed labeled graph whose vertices are the same as in $G$, whose edges $e' \in E'$ are inverse compared to $G$, but whose labeling does not depend on $G$ (our aim is to find, if it exists, a ''suitable'' labeling for $S$ using the corresponding ''inverse'' linear win-lose induction ''$\widetilde{T}^{-1}$''; see the detailed explanation below). Using this dual graph $S$, we define the map

\begin{center}
    $\displaystyle \widehat{T}: V\times\mathbb{R}_+^{d+1}\times\mathbb{R}_+^{d+1} \to V\times\mathbb{R}_+^{d+1}\times\mathbb{R}_+^{d+1}$, \\
    $ $ \\
    $\widehat{T}(v, \textbf{x}, \textbf{a})=(v', \widetilde{T}_e \textbf{x}, \widetilde{T}^{-1}_{e'} \textbf{a})$, 
\end{center}
where
\begin{center}
    $\displaystyle \widetilde{T}^{-1}_{e'} \textbf{a} : \begin{cases}
     \mathbb{R}_+^{d+1} \rightarrow \mathbb{R}_+\cdot\Updelta_{e'}^d, \\ 
    \textbf{a} \mapsto M_{e'} \textbf{a}
    \end{cases} \text{for } e': v' \rightarrow v$ in $E'$.
\end{center}

\begin{remark}
    We refer the reader to Section~\ref{sec:definitions} for the definition of $\widetilde{T}_e \textbf{x}$.
\end{remark}

The realization $\widehat{T}$ of the natural extension transforms the triple $(v, \textbf{x}, \textbf{a})$ into $(v', M^{-1}_e\textbf{x}, M_{e'}\textbf{a})$ for $\textbf{x} \in \mathbb{R}_+\cdot\Delta_e^d$ and is invertible on $V\times\mathbb{R}_+^{d+1}\times\mathbb{R}_+^{d+1}$. 

In Figure~\ref{fig:exampledualss}, we follow the edge $e:v\to v'$ on $G$ and the edge $e':v'\to v$ on $S$. The extension $\widehat{T}$ sends the triple $(v,\textbf{x},\textbf{a})$ to the triple $\Big(v',\Big[\begin{smallmatrix} 1 & 0 & 0\\ -1 & 1 & 0\\ 0 & 0 & 1 \end{smallmatrix}\Big]\textbf{x}, \Big[ \begin{smallmatrix} 1 & 0 & 0\\ 0 & 1 & 1\\ 0 & 0 & 1 \end{smallmatrix}\Big]\textbf{a}\Big)$.

\begin{figure}
    \centering
    \begin{tikzpicture}[yscale=0.7,xscale=0.7, > = Stealth, very thick]
        \node at (2,2){$G$};
        \draw[Apricot] (0,0) circle (0.5);
        \node[Apricot] at (0,0){$v$};
        \draw (4,0) circle (0.5);
        \node at (4,0.08){$v'$};
        \draw[Apricot, arrows={->[scale=2]}, Apricot] (0.5,0) -- node[midway, below] {$1$} (3.5,0);
        \draw[->] (0.35355,-0.35355) to node[midway, below left] {$2$} (1.5,-1.5);
        \draw[->] (-2,0.5) to [out=0, in=145, looseness=1] node[midway, above] {$3$}(-0.35355,0.35355);
        \draw[->] (-2,-2) to node[midway, above left] {$2$} (-0.35355,-0.35355);
        \draw[->] (4,0.5) to node[midway, right] {$1$} (4,1.8);
        \draw[->] (5.5,-1) to (4.35355,-0.35355);
        \node at (5.1,-0.4){$2$};
        \draw[->] (3.64644,-0.35355) to (2.5,-1.5);
        \node at (3.5,-1){$3$};
        \node at (12,2){$S$};
        \draw (10,0) circle (0.5);
        \node at (10,0){$v$};
        \draw[Apricot] (14,0) circle (0.5);
        \node[Apricot] at (14,0.08){$v'$};
        \draw[Apricot, arrows={->[scale=2]}, Apricot] (13.5,0) -- node[midway, above] {$3$} (10.5,0);
        \draw[->] (11.5,-1.5) to node[midway, below left] {$2$} (10.35355,-0.35355);
        \draw[->] (9.64644,0.35355) to [out=145, in=0, looseness=1] node[midway, above] {$3$}(8,0.5);
        \draw[->] (9.64644,-0.35355) to node[midway, above left] {$2$} (8,-2);
        \draw[->] (14,1.8) to node[midway, right] {$1$} (14,0.5);
        \draw[->] (14.35355,-0.35355) to (15.5,-1);
        \node at (15.1,-0.4){$2$};
        \draw[->] (12.5,-1.5) to (13.64644,-0.35355);
        \node at (13.5,-1){$3$};
    \end{tikzpicture} \\
    $ $ \\
    \begin{tikzpicture}[yscale=0.7,xscale=0.7]
        \fill[Apricot] (2,0)--(0,0)--(0,3)--cycle;
        \fill[lightgray] (4,0)--(6,3)--(8,0)--cycle;
        \node at (1,0.3){$1<2$};
        \node[below] at (-0.15,0){$x_3=0$};
        \node[above left, rotate=56.30993247402] at (-0.4,2.4){$x_2=0$};
        \node[above right, rotate=303.690067526] at (0.4,2.4){$x_1=0$};
        \draw(-2,0)--(0,3)--(2,0)--cycle;
        \draw(0,3)--(0,0);
        \draw(4,0)--(6,3)--(8,0)--cycle;
        \node at (3,1.5){$\times$};
        \node at (2.5,3.5)[right]{$\mathbb{R}_+\cdot\Delta^{d}_{1(2)}$};
        \draw[->] (2.5,3.5) to [out=180, in=110, looseness=1.5] (0.5,1);
        \node at (6,1){$\mathbb{R}_+\cdot\Delta^{d}$};
        \node[below] at (5.85,0){$a_3=0$};
        \node[above left, rotate=56.30993247402] at (5.6,2.4){$a_2=0$};
        \node[above right, rotate=303.690067526] at (6.4,2.4){$a_1=0$};
        \node at (-4.5,1.5){$\{v\}$};
        \node at (-3,1.5){$\times$};
        \end{tikzpicture} \\
        \begin{tikzpicture}[yscale=0.7,xscale=0.7]
        \draw[->] (0,0)-- node[midway, right] {$\widehat{T}$} (0,-2);
        \draw[white](-2.5,0)--(0,0);
    \end{tikzpicture} \\
    \begin{tikzpicture}[yscale=0.7,xscale=0.7]
        \fill[lightgray] (-2,0)--(2,0)--(0,3)--cycle;
        \fill[Apricot] (4,0)--(7,1.5)--(8,0)--cycle;
        \node[below] at (-0.15,0){$x_3=0$};
        \node[above left, rotate=56.30993247402] at (-0.4,2.4){$x_2=0$};
        \node[above right, rotate=303.690067526] at (0.4,2.4){$x_1=0$};
        \draw(-2,0)--(0,3)--(2,0)--cycle;
        \draw(4,0)--(6,3)--(8,0)--cycle;
        \node at (3,1.5){$\times$};
        \node at (0,1){$\mathbb{R}_+\cdot\Delta^{d}$};
        \node[below] at (5.85,0){$a_3=0$};
        \node[above left, rotate=56.30993247402] at (5.6,2.4){$a_2=0$};
        \node[above right, rotate=303.690067526] at (6.4,2.4){$a_1=0$};
        \draw(4,0)--(7,1.5);
        \node at (6,0.3){$2>3$};
        \node at (8.5,3.5)[right]{$\mathbb{R}_+\cdot\Delta^{d}_{3(2)}$};
        \draw[->] (8.5,3.5) to [out=250, in=330, looseness=1.5] (7,0.5);
        \draw[white](-8.3,0)--(-2,0);
        \node at (-4.5,1.5){$\{v'\}$};
        \node at (-3,1.5){$\times$};
    \end{tikzpicture}
    \caption{A step of the invertible extension of the linear win-lose induction.}
    \label{fig:exampledualss}
\end{figure}

Note that $\widehat{T}$ preserves the Lebesgue measure since it is a bijection 
with Jacobian equal to $1$; but $\widehat{T}$ does not preserve the standard scalar product $\langle \textbf{x},\textbf{a} \rangle = \textbf{x}^T\cdot \textbf{a}$, and the Lebesgue measure on $V\times\mathbb{R}_+^{d+1}\times\mathbb{R}_+^{d+1}$ is not very useful, because even its projection onto the first two coordinates has infinite density. Let us restrict the domain. To do this, we define the bilinear form associated with $\widehat{T}$ as
$$\displaystyle \langle \textbf{x},\textbf{a}\rangle_{v} := \textbf{x}^T\Omega_v\textbf{a}\quad\quad\text{if $\textbf{x}$ is in the vertex $v$},$$
where $\Omega_v$ is a non-negative (not necessarily symmetrical) matrix of size $(d+1)\times (d+1)$. This new bilinear form is preserved if and only if for any pair $v,v' \in V$ the following equality holds:
$$\displaystyle \textbf{x}^T\Omega_v\textbf{a} =  \textbf{x}^T\big(M^{-1}_e\big)^T\Omega_{v'}M_{e'}\textbf{a}.$$
Since it must hold for arbitrary vectors $\textbf{x}$ and $\textbf{a}$, one has
$$\displaystyle\Omega_v = \big(M^{-1}_e\big)^T\Omega_{v'}M_{e'},$$
\begin{equation}\tag{2}\label{eq:1}
    \displaystyle M^T_e\Omega_v = \Omega_{v'}M_{e'}\quad \text{for any pair } v,v' \in V.
\end{equation}

Note that the map $\widehat{T}$ is determined by a choice of solution $(\Omega_v)_{v\in V}$ to~(\ref{eq:1}); we therefore refer to it as the \textit{realization} of the natural extension. Let us understand the connection between $\widehat{T}$ and the natural extension $\widehat{T}_{\text{AL}}$, which was defined in \cite[$\mathsection$~3]{arnouxlabbe}:
\begin{center}
    $\displaystyle \widehat{T}_{AL}: V\times\mathbb{R}_+^{d+1}\times\mathbb{R}_+^{d+1} \to V\times\mathbb{R}_+^{d+1}\times\mathbb{R}_+^{d+1}$, \\
    $\widehat{T}_{AL}(v, \textbf{x}, \textbf{a})=(v', M^{-1}_e \textbf{x}, M_e^T \textbf{a})$.
\end{center}

The map $\widehat{T}_{\text{AL}}$ has Jacobian $1$ and preserves the Lebesgue measure. We recall that the main problem in \cite{arnouxlabbe} is to find a domain $D_{\text{AL}} \subset  V\times\mathbb{R}_+^{d+1}\times\mathbb{R}_+^{d+1}$ on which $\widehat{T}_{\text{AL}}$ is a bijection (see \cite[$\mathsection$~3, Remark~5]{arnouxlabbe}).

\begin{proposition}
    Assume that $\big(\Omega_{v}\big)_{v\in V}$ is a solution of the system of equations~(\ref{eq:1}), and that every matrix $\Omega_v$ has full rank. Then $\widehat{T}$ is a bijection on
    $$\displaystyle D_{\text{AL}} = V\times \mathbb{R}_+^{d+1}\times\bigsqcup_{v\in V}\Omega_v\mathbb{R}_+^{d+1}.$$
\end{proposition}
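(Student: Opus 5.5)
The plan is to read the statement as saying that the Arnoux--Labbé map $\widehat{T}_{\text{AL}}$ is a bijection of $D_{\text{AL}}$. Here the $\textbf{a}$-fibre over a vertex $v$ is the cone $\Omega_v\mathbb{R}_+^{d+1}$, so $D_{\text{AL}}=\bigsqcup_{v\in V}\{v\}\times\mathbb{R}_+^{d+1}\times\Omega_v\mathbb{R}_+^{d+1}$. I would prove this by conjugating $\widehat{T}_{\text{AL}}$ to our realization $\widehat{T}$, which is a bijection of $V\times\mathbb{R}_+^{d+1}\times\mathbb{R}_+^{d+1}$. The conjugacy is
$$\Phi: V\times\mathbb{R}_+^{d+1}\times\mathbb{R}_+^{d+1}\to D_{\text{AL}},\qquad \Phi(v,\textbf{x},\textbf{a})=(v,\textbf{x},\Omega_v\textbf{a}).$$

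First I check that $\Phi$ is a bijection onto $D_{\text{AL}}$. Surjectivity onto each fibre $\{v\}\times\mathbb{R}_+^{d+1}\times\Omega_v\mathbb{R}_+^{d+1}$ holds by definition. Injectivity follows because $\Omega_v$ has full rank, so $\textbf{a}\mapsto\Omega_v\textbf{a}$ is injective. The vertex coordinate keeps the fibres apart, which is what the disjoint union records, even if two cones $\Omega_v\mathbb{R}_+^{d+1}$ intersect as subsets of $\mathbb{R}^{d+1}$.

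Second, I verify the intertwining relation $\widehat{T}_{\text{AL}}\circ\Phi=\Phi\circ\widehat{T}$. Take $\textbf{x}\in\mathbb{R}_+\cdot\Delta^d_e$ with $e:v\to v'$, and let $e':v'\to v$ be the corresponding dual edge. Then
$$\widehat{T}_{\text{AL}}(v,\textbf{x},\Omega_v\textbf{a})=(v',M_e^{-1}\textbf{x},M_e^T\Omega_v\textbf{a})=(v',M_e^{-1}\textbf{x},\Omega_{v'}M_{e'}\textbf{a})=\Phi\big(\widehat{T}(v,\textbf{x},\textbf{a})\big),$$
where the middle equality is exactly~(\ref{eq:1}). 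In particular $\widehat{T}_{\text{AL}}$ maps $D_{\text{AL}}$ into itself, and $\widehat{T}_{\text{AL}}|_{D_{\text{AL}}}=\Phi\circ\widehat{T}\circ\Phi^{-1}$. Hence it is a bijection as soon as $\widehat{T}$ is.

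The remaining step, and the only real content, is to justify that $\widehat{T}$ is a bijection of $V\times\mathbb{R}_+^{d+1}\times\mathbb{R}_+^{d+1}$ (up to measure zero); the text asserts this but I would spell it out. The key hypothesis is that the dual labeling makes $S$ a simplicial system: labels are injective on the outgoing edges of each vertex of $S$, that is, on the reversed incoming edges of each $v'$ in $G$. With this, the cones $M_{e'}\mathbb{R}_+^{d+1}=\mathbb{R}_+\cdot\Delta^d_{e'}$, for $e'$ ranging over $v'_{\text{out}}$ in $S$, partition $\mathbb{R}_+^{d+1}$.

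Given a target point $(v',\textbf{y},\textbf{b})$, the inverse is constructed as follows:
\begin{enumerate}
\item Since these cones partition $\mathbb{R}_+^{d+1}$, the vector $\textbf{b}$ lies in a unique $\mathbb{R}_+\cdot\Delta^d_{e'}$ with $e'$ outgoing from $v'$ in $S$. This determines a unique edge $e:v\to v'$ of $G$.
\item Set $\textbf{x}=M_e\textbf{y}$; since $M_e^{-1}(\mathbb{R}_+\cdot\Delta^d_e)=\mathbb{R}_+^{d+1}$, this point lies in $\mathbb{R}_+\cdot\Delta^d_e$.
\item Set $\textbf{a}=M_{e'}^{-1}\textbf{b}\in\mathbb{R}_+^{d+1}$.
\end{enumerate}
This yields the unique preimage, giving both surjectivity and injectivity.

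I expect this last point to be the main obstacle. One must state precisely the compatibility required between the edges $e$ and $e'$: each $e$ must be paired with its reverse, and each vertex of $S$ needs a well-defined partition into its outgoing cones. One must also keep track of the measure-zero boundaries, on which the partitions are only defined almost everywhere.
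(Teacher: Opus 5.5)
Your proposal is correct and takes essentially the same route as the paper: the change of coordinates $\mathbf{a}\mapsto\Omega_v\mathbf{a}$ intertwines $\widehat{T}$ with $\widehat{T}_{\text{AL}}$ by~\eqref{eq:1}, so bijectivity transfers from $\widehat{T}$ to $\widehat{T}_{\text{AL}}$ on $D_{\text{AL}}$. Your additional argument that $\widehat{T}$ is itself an almost-everywhere bijection is sound, and the paper only asserts this step. It relies on two facts: the outgoing cones of each vertex of the dual system $S$ partition $\mathbb{R}_+^{d+1}$, and the edges of $S$ are paired bijectively with the reversed edges of $G$.
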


\begin{proof}
    Consider the change of coordinates $\textbf{b}(\textbf{a}) := \Omega_v\cdot \textbf{a}$. Then
    \begin{align}
        \widehat{T}_{AL}\big(v,\textbf{x},\textbf{b}(\textbf{a})\big) 
        &= \widehat{T}_{AL}(v,\textbf{x},\Omega_v\cdot \textbf{a}) 
        = (v', M^{-1}_e \textbf{x}, M_e^T\Omega_v\cdot \textbf{a}) \notag\\
        &= (v', M^{-1}_e \textbf{x}, \Omega_{v'}M_{e'}\cdot \textbf{a}) 
        = \big(v', M^{-1}_e \textbf{x}, \textbf{b}(\widehat{T}(\textbf{a}))\big).
    \end{align}
    This means that the diagram \\ 
    \begin{minipage}[c]{1\textwidth}
        \centering
        \begin{tikzpicture}[node distance={25mm}]
        \centering
            \node (1) {$V\times\mathbb{R}_+^d\times\mathbb{R}_+^d$};
            \node (2) [right of=1] {};
            \node (3) [below of=1] {$\displaystyle V\times \mathbb{R}_+^d\times\bigsqcup_{v\in V}\Omega_v\mathbb{R}_+^d$};
            \node (4) [right of=2] {$V\times\mathbb{R}_+^d\times\mathbb{R}_+^d$};
            \node (5) [below of=4] {$\displaystyle V\times \mathbb{R}_+^d\times\bigsqcup_{v\in V}\Omega_v\mathbb{R}_+^d$};
            \draw[->] (1) -- node[midway, above] {$\widehat{T}$} (4);
            \draw[->] (3) -- node[midway, above] {$\widehat{T}_{\text{AL}}$} (5);
            \draw[->] (1) -- node[midway, left] {$\Id\times \textbf{b}$} (3);
            \draw[->] (4) -- node[midway, right] {$\Id \times \textbf{b}$} (5);
        \end{tikzpicture}
    \end{minipage}\\ 
    is commutative, and $\widehat{T}_{\text{AL}}$ is a bijection on $V\times \mathbb{R}_+^{d+1}\times\bigsqcup_{v\in V}\Omega_v\mathbb{R}_+^{d+1}$, since both $\widehat{T}$ and $\textbf{b}$ are bijections.
\end{proof}

\begin{remark}
    Another possibility for constructing a natural extension is described by Mercat~\cite{mercat} for matrix graphs, which also include simplicial systems. In this case, the labels in the dual graph remain the same as in the initial one, and then the determinization of the dual graph gives the part of the invariant domain for the corresponding vertex (that coincides with our bilinear form matrices).
\end{remark}

\subsection{Invariant measure for the natural extension}
We start constructing invariant measures from the natural extension $\widehat{T}$.

\begin{lemma}
    Assume that there is a labeling $E'$ for which the system of equations~(\ref{eq:1}) has a non-zero solution $\big(\Omega_{v}\big)_{v\in V}$. Then the natural extension $\widehat{T}$ preserves the hypersurface 
    $$\displaystyle H:= \big\{(v,\textbf{x},\textbf{a})\in V\times\mathbb{R}_+^{d+1}\times\mathbb{R}_+^{d+1}\big|\;\;\langle \textbf{x},\textbf{a}\rangle_{v}=1\big\},$$
    i.e., $\widehat{T}(H) = H$, and there is a $\widehat{T}$-invariant measure $\mu_{\widehat{T}}$ on $H$ defined as
    $$\mu_{\widehat{T}}(A) := \text{Leb}(\text{C}_H A)\quad\text{for }A\subset H,$$
    where $\text{C}_{H} A := \big\{\{v\}\times(\alpha\textbf{x},\alpha\textbf{a})\in V\times\mathbb{R}_+^{d+1}\times\mathbb{R}_+^{d+1}\big|\;\;(v,\textbf{x},\textbf{a})\in A,\;\alpha\in[0,1]\big\}$ is the subcone of finite $\mathbb{R}^{2d+2}$-Lebesgue measure corresponding to $A$.
\end{lemma}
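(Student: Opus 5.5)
The plan has two parts: first show that $\widehat{T}$ maps $H$ onto $H$, then obtain invariance of $\mu_{\widehat{T}}$ from invariance of Lebesgue measure on the cones $\text{C}_H A$.

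\emph{Invariance of $H$.} Take $(v,\textbf{x},\textbf{a})$ with $\textbf{x}\in\mathbb{R}_+\cdot\Delta^d_e$, $e:v\to v'$, and let $e':v'\to v$ be the corresponding edge of $S$. By~(\ref{eq:1}),
$$\langle M_e^{-1}\textbf{x},M_{e'}\textbf{a}\rangle_{v'}=\textbf{x}^T(M_e^{-1})^T\Omega_{v'}M_{e'}\textbf{a}=\textbf{x}^T(M_e^{-1})^TM_e^T\Omega_v\textbf{a}=\langle\textbf{x},\textbf{a}\rangle_v .$$
So the form is preserved pointwise, and hence $\widehat{T}(H)\subset H$. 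For the reverse inclusion I would use that $\widehat{T}$ is invertible on $V\times\mathbb{R}_+^{d+1}\times\mathbb{R}_+^{d+1}$, as stated above. Given $(v',\textbf{y},\textbf{b})$, the dual-graph labeling determines the unique edge $e':v'\to v$ with $\textbf{b}\in\mathbb{R}_+\cdot\Delta^d_{e'}$. We then set $\textbf{x}=M_e\textbf{y}$ and $\textbf{a}=M_{e'}^{-1}\textbf{b}$, where $e$ is the $G$-edge paired with $e'$. The identity above, read backwards, shows that $\widehat{T}^{-1}(H)\subset H$, and therefore $\widehat{T}(H)=H$. Implicit here is the structural assumption that edges of $S$ are in bijection with edges of $G$. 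Throughout, all statements hold up to the measure-zero boundaries of the simplices $\Delta_e^d$.

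\emph{Invariance of $\mu_{\widehat{T}}$.} On each piece $\{v\}\times\mathbb{R}_+\cdot\Delta^d_e\times\mathbb{R}_+^{d+1}$, the map $\widehat{T}$ acts by the linear map $(\textbf{x},\textbf{a})\mapsto(M_e^{-1}\textbf{x},M_{e'}\textbf{a})$. This map has determinant $\det M_e^{-1}\cdot\det M_{e'}=1$, because $M_e$ and $M_{e'}$ are unipotent (identity plus off-diagonal elementary matrices in a single column). Linearity gives $\widehat{T}(\alpha\textbf{x},\alpha\textbf{a})=\alpha\widehat{T}(\textbf{x},\textbf{a})$. Moreover, the cone $\mathbb{R}_+\cdot\Delta_e^d$ is stable under scaling, so a whole ray $\{\alpha(\textbf{x},\textbf{a})\}$ stays in one piece. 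Hence $\widehat{T}(\text{C}_HA)=\text{C}_H(\widehat{T}A)$. Splitting $A$ along the countably many (in fact finitely many) pieces indexed by edges, and using that $\widehat{T}$ is injective with Jacobian $1$ on each piece, we get
$$\Leb(\text{C}_H\widehat{T}^{-1}A)=\Leb(\widehat{T}^{-1}\text{C}_HA)=\Leb(\text{C}_HA).$$
This is exactly $\mu_{\widehat{T}}(\widehat{T}^{-1}A)=\mu_{\widehat{T}}(A)$.

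\emph{Remaining points.} The sets $\text{C}_HA$ are contained in $\{\langle\textbf{x},\textbf{a}\rangle_v\le1\}$, so the finiteness of $\Leb(\text{C}_HA)$ only matters for bounded $A$; the lemma asserts finiteness and I would not prove more. Measurability of $\text{C}_HA$ follows because it is the image of $A\times[0,1]$ under a continuous map. The step I expect to need the most care is the surjectivity $\widehat{T}(H)\supset H$, which relies on the dual labeling making $\widehat{T}$ a genuine bijection. I would take this from the invertibility of $\widehat{T}$ asserted after its definition, rather than re-deriving it.
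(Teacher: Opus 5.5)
Your proposal is correct and follows the paper's proof: the same computation from~(\ref{eq:1}) gives $\widehat{T}(H)\subset H$, and the same chain $\Leb(\text{C}_H\widehat{T}^{-1}A)=\Leb(\widehat{T}^{-1}\text{C}_HA)=\Leb(\text{C}_HA)$ gives invariance. You spell out what the paper leaves implicit, namely the reverse inclusion via invertibility of $\widehat{T}$, the homogeneity identity $\widehat{T}(\text{C}_HA)=\text{C}_H(\widehat{T}A)$, and the unit Jacobian from unipotence; the paper in turn records that each ray meets $H$ exactly once, which is what makes $A\mapsto\Leb(\text{C}_HA)$ additive on disjoint sets, a point you use only implicitly.
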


\begin{proof}
    The hypersurface $H$ is $\widehat{T}$-invariant by the definition of the matrices $\big(\Omega_{v}\big)_{v\in V}$. Indeed, if $(v,\textbf{x},\textbf{a}) \in H$, then
    $$\big(M^{-1}_e\textbf{x}\big)^T\Omega_{v'}M_{e'}\textbf{a} = \textbf{x}^T\Omega_{v}\textbf{a} = 1,$$
    hence $\widehat{T}(v,\textbf{x},\textbf{a})\in H$. Note that $H$ is transverse to the cones in $V\times\mathbb{R}^{d+1}_+\times\mathbb{R}^{d+1}_+$, i.e.,
    $$\displaystyle \big|H\cap \big\{(v,\lambda\textbf{x},\lambda\textbf{a})|\;\lambda\geq0\big\}\big| = 1\quad\quad\text{for any }(v,\textbf{x},\textbf{a})\in V\times\mathbb{R}^{d+1}_+\times\mathbb{R}^{d+1}_+.$$
    Then the measure $\mu_{\widehat{T}}$ is $\widehat{T}$-invariant because $\widehat{T}$ is a bijection which preserves $H$ and preserves the Lebesgue measure: 
    $$\displaystyle \mu_{\widehat{T}}\big(\widehat{T}^{-1}A\big) = \text{Leb}\big(\text{C}_H\widehat{T}^{-1}A\big)=\text{Leb}\big(\widehat{T}^{-1}(\text{C}_HA)\big)=\text{Leb}(\text{C}_H A)=\mu_{\widehat{T}}(A)\quad\quad\text{for }A\subset H.$$
\end{proof}

\begin{figure}[ht]
    \center{\begin{tikzpicture}[>=Stealth, scale=1.5]
        \draw[->, thick] (0,0) -- (7,0) node[below] {$\mathbf{x}$};
        \draw[->, thick] (0,0) -- (0,5) node[left] {$\mathbf{a}$};
        \fill[green!30] (0.5,4) .. controls (0.3,2.4) and (2.5,1) .. (4.5,1.2)
                -- (5.8, 2.9) .. controls (5,2.7) and (3.5,4) .. (2,5.6)
                -- (0.5,4) -- cycle;
        \node at (1.5,4) {\Large H};
        \fill[blue!30, opacity=0.7] (0,0) -- (1,3.2) -- (2,2.7) -- cycle;
        \fill[blue!30, opacity=0.7] (0,0) -- (3.7,2.95) -- (4.3,2.2) -- cycle;
        \node[rotate=60] at (0.8,1.6) {\tiny$C_H \hat{T}^{-1}(\mathbf{A})$};
        \node[rotate=37] at (2.1,1.4) {\tiny$C_H \mathbf{A}$};
        \draw[black, thick,dashed] (0,0) -- (1,3.2);
        \draw[black, thick,dashed] (0,0) -- (2,2.7);
        \draw[black, thick,dashed] (0,0) -- (3.7,2.95);
        \draw[black, thick,dashed] (0,0) -- (4.3,2.2);
        \fill[orange!40] (1.0,3.2) .. controls (1.3,3.6) and (1.7,3.7) .. (2.0,3.5)
                 .. controls (2.2,3.3) and (2.2,2.9) .. (2.0,2.7)
                 .. controls (1.8,2.5) and (1.4,2.5) .. (1.2,2.7)
                 .. controls (1.0,2.9) and (0.9,3.0) .. (1.0,3.2);
        \draw[red, thick] (1.0,3.2) .. controls (1.3,3.6) and (1.7,3.7) .. (2.0,3.5)
                  .. controls (2.2,3.3) and (2.2,2.9) .. (2.0,2.7)
                  .. controls (1.8,2.5) and (1.4,2.5) .. (1.2,2.7)
                  .. controls (1.0,2.9) and (0.9,3.0) .. (1.0,3.2);
        \node[red] at (1.5,3.1) {$\hat{T}^{-1}(\mathbf{A})$};
        \fill[orange!40] (3.5,2.8) .. controls (3.8,3.2) and (4.2,3.3) .. (4.4,3.1)
                 .. controls (4.6,2.9) and (4.6,2.5) .. (4.4,2.3)
                 .. controls (4.2,2.1) and (3.8,2.2) .. (3.5,2.4)
                 .. controls (3.3,2.5) and (3.3,2.6) .. (3.5,2.8);
        \draw[red, thick] (3.5,2.8) .. controls (3.8,3.2) and (4.2,3.3) .. (4.4,3.1)
                  .. controls (4.6,2.9) and (4.6,2.5) .. (4.4,2.3)
                  .. controls (4.2,2.1) and (3.8,2.2) .. (3.5,2.4)
                  .. controls (3.3,2.5) and (3.3,2.6) .. (3.5,2.8);
        \node[red] at (4.1,2.7) {$\mathbf{A}$};
        \draw[->, very thick] (1.5,2.8) -- (3.7,2.6) node[midway, above] {$\hat{T}$};
        \draw[->, very thick] (1.1,2.2) -- (3.3,2.0) node[midway, above] {$\hat{T}$};
        \node[align=left] at (5.5,4.3) {$A \subset H$\\
                                 $\mu_{\hat{T}} := Leb(C_H A)$\\
                                 $\hat{T}_* \mu_{\hat{T}} = \mu_{\hat{T}}$};
    \end{tikzpicture}}
    \caption{The action of $\widehat{T}$ on cones preserves the hypersurface $H$.}
    \label{fig:actioncones}
\end{figure}

\subsection{Invariant measure for the win-lose induction}
To come back to the linear win-lose induction $\widetilde{T}$, we project the measure $\mu_{\widehat{T}}$, which is invariant for the natural extension, onto the first two coordinates and then construct a $\widetilde{T}$-invariant measure with finite density. 

\begin{lemma}
    Consider the projection $\widehat{\pi}$ defined as\\ \\
    \begin{minipage}[ht]{0.49\linewidth}
        \centering
        $\widehat{\pi}: H \to V\times\mathbb{R}_+^{d+1},$ \\
        $\widehat{\pi}(v,\textbf{x},\textbf{a})=(v,\textbf{x}),$
    \end{minipage}
    \hfill
    \begin{minipage}[ht]{0.49\linewidth}
        \begin{tikzpicture}[node distance={25mm}]
            \centering
            \node (1) {$H$}; 
            \node (2) [right of=1] {$H$};
            \node (3) [below of=1] {$V \times \mathbb{R}_+^{d+1}$};
            \node (4) [below of=2] {$V \times \mathbb{R}_+^{d+1}$};
            \draw[->] (1) -- node[midway, above] {$\widehat{T}\big|_H$} (2);
            \draw[->] (3) -- node[midway, above] {$\widetilde{T}$} (4);
            \draw[->] (1) -- node[midway, left] {$\widehat{\pi}$} (3);
            \draw[->] (2) -- node[midway, right] {$\widehat{\pi}$} (4);
        \end{tikzpicture}
    \end{minipage} \\ \\
    and the measure $\mu_{\widetilde{T}}$ on $V\times\mathbb{R}_+^{d+1}$ defined as 
    $$\mu_{\widetilde{T}}=\widehat{\pi}_*\mu_{\widehat{T}}(X) = \mu_{\widehat{T}}(\widehat{\pi}^{-1}X)\quad\text{for }X \in V\times\mathbb{R}_+^{d+1},$$
    where $\widehat{\pi}^{-1}X = \big\{(v,\textbf{x},\textbf{a})|\;(v,\textbf{x})\in X, \textbf{a}\in \mathbb{R}_+^{d+1},\;\langle \textbf{x},\textbf{a} \rangle_v=1\big\}$.\\
    The measure $\mu_{\widetilde{T}}$ is $\widetilde{T}$-invariant and pseudo-homogeneous.
\end{lemma}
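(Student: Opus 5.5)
Invariance is inherited from the previous lemma through the commutative diagram, and pseudo-homogeneity comes from a scaling symmetry of $H$ that is compatible with the cone construction $\text{C}_H$.

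\emph{Invariance.} By the definition of $\widehat{T}$ we have $\widehat{\pi}\circ\widehat{T}|_H=\widetilde{T}\circ\widehat{\pi}$, since the first two components of $\widehat{T}(v,\textbf{x},\textbf{a})$ are exactly $\widetilde{T}(v,\textbf{x})$. This gives $\widehat{T}^{-1}(\widehat{\pi}^{-1}X)=\widehat{\pi}^{-1}(\widetilde{T}^{-1}X)$ for every measurable $X\subset V\times\mathbb{R}_+^{d+1}$. Using the $\widehat{T}$-invariance of $\mu_{\widehat{T}}$ from the previous lemma,
\begin{align*}
\mu_{\widetilde{T}}(\widetilde{T}^{-1}X)&=\mu_{\widehat{T}}\big(\widehat{\pi}^{-1}\widetilde{T}^{-1}X\big)=\mu_{\widehat{T}}\big(\widehat{T}^{-1}\widehat{\pi}^{-1}X\big)\\
&=\mu_{\widehat{T}}(\widehat{\pi}^{-1}X)=\mu_{\widetilde{T}}(X).
\end{align*}
The only point to check is that $\widehat{T}$ maps $H$ into $H$ bijectively, which the previous lemma guarantees. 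Then preimages in $H$ commute with $\widehat{\pi}$, and no mass is lost or created.

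\emph{Pseudo-homogeneity.} For $\alpha>0$, the map $\Phi_\alpha(v,\textbf{x},\textbf{a})=(v,\alpha\textbf{x},\alpha^{-1}\textbf{a})$ preserves $H$, because $\langle\alpha\textbf{x},\alpha^{-1}\textbf{a}\rangle_v=\langle\textbf{x},\textbf{a}\rangle_v$. It also satisfies $\widehat{\pi}^{-1}(\alpha X)=\Phi_\alpha(\widehat{\pi}^{-1}X)$. Hence it suffices to show $\text{Leb}(\text{C}_H\Phi_\alpha A)=\text{Leb}(\text{C}_H A)$ for $A\subset H$.

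Observe that $\text{C}_H\Phi_\alpha A=\widetilde\Phi_\alpha(\text{C}_H A)$, where $\widetilde\Phi_\alpha$ is the linear map $(\textbf{x},\textbf{a})\mapsto(\alpha\textbf{x},\alpha^{-1}\textbf{a})$ on $\mathbb{R}^{2d+2}$. This holds because the radial scaling by $\lambda\in[0,1]$ commutes with $\widetilde\Phi_\alpha$. The determinant of $\widetilde\Phi_\alpha$ is $\alpha^{d+1}\alpha^{-(d+1)}=1$, so the Lebesgue measure is preserved and the claim follows.

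\emph{Main obstacle.} The delicate part is measurability and finiteness. One must check that $\widehat{\pi}^{-1}X$ is a measurable subset of $H$ and that the resulting measure is locally finite, i.e. has a finite density on compact subsets of the interior of the cone. That finiteness is only needed for the later density formula, not for invariance or homogeneity.

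For the density, I would compute it by disintegrating Lebesgue measure on $\text{C}_H\widehat{\pi}^{-1}X$ along the $\textbf{x}$-direction. The fiber over $\textbf{x}$ is a cone over $\{\textbf{a}:\langle\textbf{x},\textbf{a}\rangle_v=1\}\cap\text{(domain)}$, which contributes a factor like $\int_{\{\langle\textbf{x},\textbf{a}\rangle_v\le 1\}}d\textbf{a}$. This is finite as soon as $\Omega_v^T\textbf{x}$ has positive entries. For the present lemma, however, I would stay at the level of set identities, so that only invariance and pseudo-homogeneity as stated need to be proved.
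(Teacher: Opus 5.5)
Your proof is correct and follows the paper's. Invariance comes from pushing $\mu_{\widehat{T}}$ forward along $\widehat{\pi}\circ\widehat{T}|_H=\widetilde{T}\circ\widehat{\pi}$, and pseudo-homogeneity from the determinant-one rescaling $(\mathbf{x},\mathbf{a})\mapsto(\alpha\mathbf{x},\alpha^{-1}\mathbf{a})$, which is exactly the paper's pair of substitutions $\mathbf{y}=\alpha\mathbf{u}$, $\mathbf{b}=\alpha^{-1}\mathbf{c}$ with Jacobians $\alpha^{d+1}$ and $\alpha^{-(d+1)}$, written more cleanly as a linear map commuting with $\text{C}_H$ (your finiteness and density remarks are not needed for this lemma).
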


\begin{proof}
    The measure $\mu_{\widetilde{T}}=\widehat{\pi}_{*}\mu_{\widehat{T}}$ is $\widetilde{T}$-invariant because $\widetilde{T} = \widehat{\pi}\circ\widehat{T}$ and $\mu_{\widehat{T}}$ is $\widehat{T}$-invariant. It is also pseudo-homogeneous, i.e., $\mu_{\widetilde{T}}(\alpha X) = \mu_{\widetilde{T}}(X)$ for any $\alpha>0$ and $X\in V\times\mathbb{R}_+^{d+1}$: 
    \begin{align*}
        \mu_{\widetilde{T}}(\alpha X) 
        &= \int_{\{(v,\,\mathbf{y}=\lambda\alpha\mathbf{x},\,\mathbf{b}=\lambda\mathbf{a}):\;
           (v,\mathbf{x})\in X,\;
           \mathbf{a}\in \mathbb{R}_+^{d+1},\;
           \langle \alpha\mathbf{x},\mathbf{a} \rangle_v=1,\;
           \lambda>0\}}
           d\mathbf{b}\,d\mathbf{y} \\
        &= [\mathbf{y}=\alpha\mathbf{u}] \\
        &= \int_{\{(v,\,\mathbf{u}=\lambda\mathbf{x},\,\mathbf{b}=\lambda\mathbf{a}):\;
           (v,\mathbf{x})\in X,\;
           \mathbf{a}\in \mathbb{R}_+^{d+1},\;
           \alpha\langle \mathbf{x},\mathbf{a} \rangle_v=1,\;
           \lambda>0\}}
           \alpha^{d+1}\,d\mathbf{b}\,d\mathbf{u} \\
        &= [\mathbf{b}=\alpha^{-1}\mathbf{c}] \\
        &= \int_{\{(v,\,\mathbf{u}=\lambda\mathbf{x},\,\mathbf{c}=\lambda\mathbf{a}):\;
           (v,\mathbf{x})\in X,\;
           \mathbf{a}\in \mathbb{R}_+^{d+1},\;
           \langle \mathbf{x},\mathbf{a} \rangle_v=1,\;
           \lambda>0\}}
           d\mathbf{c}\,d\mathbf{u} \\
        &= \mu_{\widetilde{T}}(X).
    \end{align*}
\end{proof}
 
To calculate the invariant density $h_{\widetilde{T}}$, we integrate $\mu_{\widehat{T}}$ over all possible $\textbf{a}$ such that $\langle \textbf{x},\textbf{a}\rangle_{v} = 1$ for fixed $\textbf{x}$ and $v$:
\begin{align*}
    \mu_{\widetilde{T}}(X) 
    &= \int_{X} h_{\widetilde{T}}(\mathbf{x})\,d\mathbf{x} \\
    &= \int_{\{\mathbf{y}=\lambda\mathbf{x}:\, \mathbf{x}\in X,\,\lambda\in[0,1]\}}
       \int_{\{\mathbf{b}=\lambda\mathbf{a}:\,\mathbf{x}^T\Omega_v\mathbf{a} = 1,\,\lambda\in[0,1]\}}
       d\mathbf{b}\,d\mathbf{y} \\
    &= \int_{\{\mathbf{y}=\lambda\mathbf{x}:\, \mathbf{x}\in X,\,\lambda\in[0,1]\}}
       \frac{1}{(d+1)!}\prod_{i=0}^d\frac{1}{\bigl(\mathbf{x}^T\Omega_v\bigr)_i}\,d\mathbf{y} \\
    &= \int_X\int_0^1
       \frac{1}{(d+1)!}\prod_{i=0}^d\frac{1}{\bigl(\mathbf{x}^T\Omega_v\bigr)_i}\,d\lambda\,d\mathbf{x} \\
    &= \int_X
       \frac{1}{(d+1)!}\prod_{i=0}^d\frac{1}{\bigl(\mathbf{x}^T\Omega_v\bigr)_i}\,d\mathbf{x}.
\end{align*}
This implies
\begin{equation}\tag{3}\label{eq:2}
    \displaystyle h_{\widetilde{T}}(\textbf{x}) = \frac{1}{(d+1)!}\prod_{i=0}^d\frac{1}{\big(\textbf{x}^T\Omega_v\big)_i}.
\end{equation}

To define the invariant measure for the non-homogeneous win-lose induction $T$, we need to project the pseudo-homogeneous measure $\mu_{\widetilde{T}}$ onto a 
simplex. Recall that at the previous step, passing from the natural extension 
$\widehat{T}$ to the linear win-lose induction $\widetilde{T}$, the invariant 
measure was defined via the associated cone construction. However, the simplex $\Delta^d$  is a set of codimension $1$ in $\mathbb{R}_+^{d+1}$, and the same cone construction would yield a measure with infinite density, since $\mu_{\widetilde{T}}$ is pseudo-homogeneous. This reflects the fact that passing from the linear to the projective setting requires a different approach: instead of using the cone construction, we define the invariant measure on $\Delta^d$ by realizing it as a limit of sets of positive measure $\mu_{\widetilde{T}}$:
\begin{center}
    $\displaystyle \mu_T(X) := \lim_{\varepsilon\to0^+}\frac{\mu_{\widetilde{T}}\big(\big\{s\textbf{x}|\;\textbf{x}\in X, s\in[1,1+\varepsilon]\big\}\cap\mathbb{R}_+^{d+1}\big)}{(d+1)\varepsilon}$\quad for $X \subset \Delta^d$.
\end{center}
Let us denote the set in the numerator of this fraction as $X_{\varepsilon}$. Then, for $\varepsilon$ small enough, $h_{\widetilde{T}}(\textbf{x})$ is bounded away from zero for $\textbf{x} \in X_{\varepsilon}$, and hence, applying Fubini's theorem, we are able to compute the invariant density of the non-homogeneous map:
\begin{align*}
    \mu_T(X) &= \lim_{\varepsilon\to0^+}\frac{1}{(d+1)\varepsilon}\int_{X_{\varepsilon}}h_{\widetilde{T}}(\textbf{x})\,d\textbf{x} \\
    &= \int_{X}\lim_{\varepsilon\to0^+}\frac{1}{(d+1)\varepsilon}\int_1^{1+\varepsilon}h_{\widetilde{T}}(s\textbf{x})(d+1)s^d\,ds\,d\textbf{x} \\
    &= \int_X h_{\widetilde{T}}(\textbf{x})\,d\textbf{x} = \int_X h_T(\textbf{x})\,d\textbf{x}.
\end{align*}
This implies
$$\displaystyle h_{T}(\textbf{x}) = h_{\widetilde{T}}(1-x_1-\ldots-x_d,x_1,\ldots,x_d).$$

\begin{remark}
    The spaces $\Delta^d$ and $\mathbb{P}\Lambda^{d+1}_+$ (see 
    Section~\ref{sec:mcf}), on which MCF algorithms usually act, are 
    fundamental domains for $\widetilde{T}$, i.e., can be identified with 
    the sets of orbits of non-ordered and ordered $\widetilde{T}$ respectively. The difference 
    between these two domains is purely a matter of the choice of 
    normalization: $\Delta^d$ corresponds to the section 
    $\{\sum_{i=0}^d x_i = 1\}$, while $\mathbb{P}\Lambda^{d+1}_+$ 
    corresponds to the section $\{\max_i x_i = 1\}$. For the 
    non-homogeneous ordered map $T$ acting on $\mathbb{P}\Lambda^{d+1}_+$, 
    we obtain, using the same method, the invariant density 
    $h_T(\textbf{x}) = h_{\widetilde{T}}(1, x_1,\ldots,x_d)$.
\end{remark}

\subsection{Computational aspects}\label{ssec:calculations}

We now discuss how to solve Equations~(\ref{eq:1}). Let us suppose that $G$ is connected (otherwise, consider one of its connected components instead of $G$). Consider the set of cycles $\gamma = [v_0,v_1,\ldots,v_{\text{len}(\gamma)},v_0]$ from some fixed vertex $v_0$ that generates the group of cycles of the undirected copy of $G$. Then~(\ref{eq:1}) is equivalent to the system of equations
\begin{equation}\tag{4}\label{eq:3}
    \displaystyle \small\big(M^T_{e_{\text{len}(\gamma)}}\big)^{\sgn(e_{\text{len}(\gamma)})}\ldots\big(M^T_{e_1}\big)^{\sgn(e_1)} \cdot \Omega_{v_0} = \Omega_{v_0}\cdot \big(M_{e'_{\text{len}(\gamma)}}\big)^{\sgn(e'_{\text{len}(\gamma)})}\ldots\big(M_{e'_1}\big)^{\sgn(e'_1)},
\end{equation}
for all such $\gamma$, where $e_i$ ($e'_i$) is the edge between $v_{i-1}$ and $v_{i}$ in $G$ (in $S$) and 
$$\sgn(e_i) = \begin{cases}
1 & \text{if }e_i: v_{i-1} \to v_{i}\text{ in }G, \\
-1 & \text{otherwise};
\end{cases}\quad\quad \sgn(e'_i) = \begin{cases}
1 & \text{if }e'_i: v_{i} \to v_{i-1}\text{ in }S, \\
-1 & \text{otherwise}.
\end{cases}$$

Using the \fcolorbox{green}{white}{\href{https://gitlab.com/fougeroc/MCF/-/blob/master/simplicial.py?ref_type=heads}{Sage code}}, we check whether the system~(\ref{eq:3}) admits a solution, and, if so, express all the matrices $\big(\Omega_{v}\big)_{v\in V}$ in terms of $\Omega_{v_0}$. This highlights the computational efficiency of our approach.  

\section{General definition of multidimensional continued fraction algorithms}\label{sec:mcf}

Following \cite[$\mathsection$~1]{schweigera} and \cite[$\mathsection$~1]{schweiger}, we recall the definition of MCF algorithms. It is important to note that the algorithms under consideration, all except Gauss, act on a space of vectors with unordered coordinates (see \cite[$\mathsection$~2]{berthesteinerthuswaldner} for the definition of the ordered case; from the point of view of dynamics, ordered and unordered versions are the same). The definition given below is directly compatible with the formalism of  simplicial systems introduced in Section~\ref{sec:definitions} (see also~\cite{fougeronpp}). As we will see in Section~\ref{sec:examples}, the step matrices of an MCF algorithm correspond 
precisely to the step matrices of the win-lose induction on the associated 
simplicial system, and it is this correspondence that allows one to construct 
an explicit measurable conjugacy. 

The \textit{linear version} of a $d$-dimensional MCF algorithm acts on the real vector space $\mathbb{R}_+^{d+1}$, and its \textit{projective version} acts on the simplex:
$$\Delta^d := \big\{\textbf{x} \in \mathbb{R}^{d+1}_+\big|\;\; \sum_{i=0}^dx_i=1\big\}.$$ 
We also use the following notation for the subsets of ordered vectors: 
$$\Lambda^{d+1}_{\sigma} = \big\{\textbf{x}\in \mathbb{R}^{d+1}_+\big|\;\; x_{\sigma_0} > x_{\sigma_1} > \ldots > x_{\sigma_d}\big\}\quad\text{for }\sigma \in S_{d+1},$$
where $S_{d+1}$ is the group of permutations of $d+1$ elements. The sets $\Lambda^{d+1}_{\sigma}$ form a partition of $\mathbb{R}^{d+1}_+$ up to a set of measure zero (see~\ref{fig:lambda_partition}).
\begin{figure}
    \begin{minipage}[c]{1\textwidth}
    \center{\begin{tikzpicture}[node distance={25mm}]
        \draw(-2.5,0)--(0,4)--(2.5,0)--cycle;
        \draw(-2.5,0)--(1.25,2);
        \draw(2.5,0)--(-1.25,2);
        \draw(0,4)--(0,0);
        \node at (-0.7,0.4) {$\Lambda^2_{(132)}$};
        \node at (0.7,0.4) {$\Lambda^2_{(312)}$};
        \node at (-1,1.3) {$\Lambda^2_{(123)}$};
        \node at (1,1.3) {$\Lambda^2_{(321)}$};
        \node at (-0.6,2.2) {$\Lambda^2_{(213)}$};
        \node at (0.6,2.2){$\Lambda^2_{(231)}$};
        \node at (-2.5,0)[below left]{$1$};
        \node at (2.5,0)[below right]{$3$};
        \node at (0,4)[above]{$2$};
    \end{tikzpicture}}
    \end{minipage}
    \caption{Partition of $\mathbb{R}^{3}_+$ into sub-simplices $\Lambda^{3}_{\sigma}$ indexed by permutations $\sigma \in S_3$.}    \label{fig:lambda_partition}
\end{figure}
For the linear version, consider the matrix mapping
$$\displaystyle M: \Delta^d \to GL(d+1,\mathbb{Z}),$$
which depends on the algorithm. Then the mapping $M$ allows us to define the \textit{linear version}
$$\displaystyle F: \mathbb{R}^{d+1}_+\to\mathbb{R}^{d+1}_+,\quad\quad\textbf{x}\mapsto M\big(\rho(\textbf{x})\big)^{-1}\cdot\textbf{x}$$
of the corresponding $d$-dimensional MCF algorithm, where the mapping
$$\displaystyle \rho: \mathbb{R}^{d+1}_+ \to \Delta^d,\quad\quad (x_0,\ldots,x_d) \mapsto \|\textbf{x}\|^{-1}(x_0,x_1,\ldots,x_d),\quad\quad \|\textbf{x}\| := \sum_{i=0}^dx_i,$$
is a projection. The \textit{projective version} $f$ of the MCF algorithm  $$\displaystyle f: \Delta^{d}\to\Delta^d,\quad\quad\textbf{x}\mapsto\frac{F(\textbf{x})}{\|F(\textbf{x})\|}=\rho\big(M(\rho(\textbf{x}))^{-1}\cdot\textbf{x}\big)$$
is defined by the commutative diagram below.\\ \\
\begin{minipage}[c]{1\textwidth}
    \centering
    \begin{tikzpicture}[node distance={25mm}]
        \centering
        \node (1) {$\mathbb{R}^{d+1}_+$}; 
        \node (2) [right of=1] {$\mathbb{R}^{d+1}_+$};
        \node (3) [below of=1] {$\Delta^d$};
        \node (4) [below of=2] {$\Delta^d$};
        \draw[->] (1) -- node[midway, above] {$F$} (2);
        \draw[->] (3) -- node[midway, above] {$f$} (4);
        \draw[->] (1) -- node[midway, left] {$\rho$} (3);
        \draw[->] (2) -- node[midway, right] {$\rho$} (4);
    \end{tikzpicture}
\end{minipage}
\begin{remark}
    To simplify notation, we usually write $M(\textbf{x})$ instead of $M(\rho(\textbf{x}))$.
\end{remark}

\begin{remark}
    The matrix mapping $M$ above corresponds to the matrices we defined in Section~\ref{sec:definitions} for the win-lose induction. It connects a simplicial system with a corresponding MCF algorithm: multiplication of a vector by $M_e^{-1}$ (where $e$ is the edge we follow) is a step of the algorithm. The notion of win-lose induction formalizes this connection.
\end{remark}

\section{Examples}\label{sec:examples}

\subsection{Farey algorithm}\label{ssec:farey}
Consider the piecewise linear map $F$ (the additive Euclid algorithm) on the $2$-dimensional positive cone:
\begin{center}
    $\displaystyle F: \mathbb{R}_+^2 \to \mathbb{R}_+^2$, \\
    $ $ \\
    $\displaystyle F(x_0,x_1) = \begin{cases}
    (x_0,x_1-x_0) & \text{if } x_0<x_1, \\ 
    (x_0-x_1,x_1)&\text{otherwise}.
    \end{cases}$
\end{center}
The associated projective map $f$ is defined as:
\begin{center}
    $\displaystyle f: \Delta^1 \to \Delta^1$, \\
    $ $ \\
    $\displaystyle f(x,1-x) = \frac{F(x,1-x)}{\|F(x,1-x)\|} = \begin{cases}
    (\frac{x}{1-x},\frac{1-2x}{1-x}) & \text{if }x<\frac{1}{2}, \\ 
    (\frac{2x-1}{x},\frac{1-x}{x})&\text{otherwise}.
    \end{cases}$
\end{center}
The map $f$ is defined by the action on the first coordinate and is called the (non-ordered) \textit{Farey algorithm} (see \cite{Rauzy1981}, where it is first considered as a continued fraction algorithm). \\
Now let us construct the simplicial system $G$ for the Farey algorithm (see Fig.~\ref{fig:ss_farey}a) with the associated linear win-lose induction
$$\widetilde{T}: \{v\}\times\mathbb{R}_+^{2} \rightarrow \{v\}\times\mathbb{R}_+^{2}$$ 
$$\displaystyle \widetilde{T}(v, (x_0,x_1)=(v, M^{-1}_e\cdot\textbf{x}
), \;\; M_e = \begin{cases}
\big[\begin{smallmatrix} 1 & 1 \\ 0 & 1 \end{smallmatrix}\big] & \text{if }x_0 > x_1,\\ 
\big[\begin{smallmatrix} 1 & 0 \\ 1 & 1 \end{smallmatrix}\big] & \text{if }x_0 < x_1,
\end{cases}$$
and the projective win-lose induction
$$T: \{v\}\times\Updelta^1 \rightarrow \{v\}\times\Updelta^1,$$
$$\displaystyle T(v, (x_0, 1-x_0))=(v, T_e(\textbf{x})), \quad\quad T_e: \begin{cases}
\Updelta_e^{1} \rightarrow \Updelta^1 \\ 
\textbf{x} \mapsto \frac{M^{-1}_e \cdot \textbf{x}}{\|M^{-1}_e \cdot \textbf{x}\|_1}
\end{cases} \text{for} \;\; \textbf{x}\in\Updelta_e^{1}, \; e: v \rightarrow v,$$
where an edge partition of the simplex $\Updelta^1$ consists of two elements corresponding to edges with labels $0$ and $1$, i.e.
$$\{\Updelta^1_e\}_{e\in E} = \{\Updelta^1_0, \Updelta^1_1\} = \Big{\{}\{\textbf{x} \in \Updelta^1|\;x_0 < \frac{1}{2}\}, \{\textbf{x} \in \Updelta^1|\;x_0 > \frac{1}{2}\}\Big{\}}.$$
\begin{figure}[ht]
    \begin{minipage}[ht]{0.49\linewidth}
        \center{$G$ \\ 
        $ $ \\ 
        \begin{tikzpicture}[node distance={34mm}, > = Stealth, very thick, main/.style = {draw, circle}] 
            \node[main] (1) {$v$}; 
            \draw[->] (1) to [out=210, in=150, looseness=10] node[midway, left] {$1$} (1);
            \draw[->] (1) to [out=30, in=330, looseness=10] node[midway, right] {$0$} (1);
        \end{tikzpicture} \\ a}
    \end{minipage}
    \hfill
    \begin{minipage}[ht]{0.49\linewidth}
        \center{$S$ \\ 
        $ $ \\ 
        \begin{tikzpicture}[node distance={34mm}, > = Stealth, very thick, main/.style = {draw, circle}] 
            \node[main] (1) {$v$}; 
            \draw[->] (1) to [out=150, in=210, looseness=10] node[midway, left] {$s_1$} (1);
            \draw[->] (1) to [out=330, in=30, looseness=10] node[midway, right] {$s_2$} (1);
        \end{tikzpicture}\\ b}
    \end{minipage}
    \caption{Simplicial (a) and dual simplicial (b) systems for the Farey algorithm.}
    \label{fig:ss_farey}
\end{figure} \\
Then the non-homogeneous win-lose induction $T$ is conjugated to the Farey algorithm by the trivial bijection: \\
\begin{minipage}[h]{0.49\linewidth}
    \centering
    $\varphi_f: \{v\}\times\Delta^1 \to \Delta^1,$ \\
    $ $ \\
    $\varphi_f(v,x,1-x)=(x,1-x)$.
\end{minipage}
    \hfill
\begin{minipage}[ht]{0.49\linewidth}
    \begin{tikzpicture}[node distance={25mm}]
        \centering
        \node (1) {$\{v\}\times\Delta^1$}; 
        \node (2) [right of=1] {$\{v\}\times\Delta^1$};
        \node (3) [below of=1] {$\Delta^1$};
        \node (4) [below of=2] {$\Delta^1$};
        \draw[->] (1) -- node[midway, above] {$T$} (2);
        \draw[->] (3) -- node[midway, above] {$f$} (4);
        \draw[->] (1) -- node[midway, left] {$\varphi_f$} (3);
        \draw[->] (2) -- node[midway, right] {$\varphi_f$} (4);
    \end{tikzpicture}
\end{minipage} 
\medskip

Following the approach described in Section~\ref{sec:measure}, we construct the dual simplicial system (see Fig.~\ref{fig:ss_farey}b) and the realization $\widehat{T}$ of the natural extension for $\widetilde{T}$. A solution to the system of linear equations~(\ref{eq:1}) is $\Omega_{v}=\big[\begin{smallmatrix} 1 & 0 \\ 0 & 1 \end{smallmatrix}\big]$ for the labeling $(s_1,s_2) = (0,1)$, and the invariant densities for the linear and projective inductions are\\ \\
\begin{minipage}[ht]{0.49\linewidth}
    \centering
    $h_{\widetilde{T}}: \{v\}\times\mathbb{R}_+^2 \to \mathbb{R}_+$, \\
    $ $ \\
    $\displaystyle h_{\widetilde{T}}(v,\textbf{x}) = \frac{1}{2x_0x_1}$;
\end{minipage}
\hfill
\begin{minipage}[ht]{0.49\linewidth}
    \centering
    $h_{T}: \{v\}\times\Delta^1 \to \mathbb{R}_+$, \\
    $ $ \\
    $\displaystyle h_{T}(v,x,1-x) = \frac{1}{2x(1-x)}$.
\end{minipage} \\ \\ \\
Then the invariant density for the (non-ordered) Farey algorithm is 
$$\displaystyle h_{f}(x,1-x) = (\varphi_f)_*h_T(x,1-x) = h_{T}(v,x,1-x) = \frac{1}{2x(1-x)}.$$ 

\begin{remark}
    In Farey's case, the system~(\ref{eq:1}) admits another symmetric solution, 
    namely $\Omega_{v_0}=\big[\begin{smallmatrix} 0 & 1 \\ 1 & 0 \end{smallmatrix}\big]$ for the labeling $(s_1,s_2) = (1,0)$, which yields the same invariant density.
\end{remark}
 
\subsection{Gauss algorithm}\label{ssec:gauss}
Now the map $F$ is an ordered (see \cite[$\mathsection$~2]{berthesteinerthuswaldner} for the precise definition of the ordered case) Euclid algorithm on the cone $\Lambda^1_{\text{id}} := \big{\{}x\in\mathbb{R}_+^2|\;x_0 > x_1\big{\}}$:
\begin{center}
    $\displaystyle F: \Lambda^1_{\text{id}} \to \Lambda^1_{\text{id}}$, \\
    $ $ \\
    $ F(x_0,x_1) = (x_1, x_0-\big[\frac{x_0}{x_1}\big]x_1)$.
\end{center}
\medskip

The associated projective map $f$ is defined as:
\\ \\
\begin{minipage}[ht]{0.49\linewidth}
    \centering
    $f: \mathbb{P}\Lambda^1_{\text{id}} \to \mathbb{P}\Lambda^1_{\text{id}},$ \\
    \medskip
    $f(1:x)=\big(x:1-\big[\frac{1}{x}\big]x\big)=\big(1:\frac{1}{x}-\big[\frac{1}{x}\big]\big)$. \\
    $ $ \\
    $f$ is associated with $F$ by the projection \\
    $\pi: \Lambda^1_{\text{id}} \to \mathbb{P}\Lambda^1_{\text{id}}$, \\
    \medskip
    $\pi(x_0,x_1) = \big(1:\frac{x_1}{x_0}\big)$.
\end{minipage}
\hfill
\begin{minipage}[ht]{0.49\linewidth}
    \center{\begin{tikzpicture}[node distance={25mm}]
        \node (1) {$\Lambda^1_{\text{id}}$}; 
        \node (2) [right of=1] {$\Lambda^1_{\text{id}}$};
        \node (3) [below of=1] {$\mathbb{P}\Lambda^1_{\text{id}}$};
        \node (4) [below of=2] {$\mathbb{P}\Lambda^1_{\text{id}}$};
        \draw[->] (1) -- node[midway, above] {$F$} (2);
        \draw[->] (3) -- node[midway, above] {$f$} (4);
        \draw[->] (1) -- node[midway, left] {$\pi$} (3);
        \draw[->] (2) -- node[midway, right] {$\pi$} (4);
    \end{tikzpicture}}
\end{minipage} \\ \\
\medskip
The map $f$ is defined by the action on the second coordinate and is called the (ordered) \textit{Gauss algorithm} (see, e.g., \cite{Khinchin1935}). \\
\begin{figure}[ht]
    \begin{minipage}[ht]{0.49\linewidth}
        \center{$G$ \\ 
        $ $ \\ 
        \begin{tikzpicture}[node distance={34mm}, > = Stealth, very thick, main/.style = {draw, circle}] 
            \node[main] (1) {$v_0$}; 
            \node[main] (2) [right of=1] {$v_1$};
            \draw[->] (1) to [out=210, in=150, looseness=10] node[midway, left] {$0$} (1);
            \draw[->] (1) to [out=45, in=135, looseness=1] node[midway, above] {$1$} (2);
            \draw[->] (2) to [out=225, in=315, looseness=1] node[midway, below] {$0$} (1);
            \draw[->] (2) to [out=30, in=330, looseness=10] node[midway, right] {$1$} (2);
        \end{tikzpicture} \\ a}
    \end{minipage}
    \hfill
    \begin{minipage}[ht]{0.49\linewidth}
        \center{$S$ \\ 
        $ $ \\ 
        \begin{tikzpicture}[node distance={34mm}, > = Stealth, very thick, main/.style = {draw, circle}] 
            \node[main] (1) {$v_0$}; 
            \node[main] (2) [right of=1] {$v_1$};
            \draw[->] (1) to [out=150, in=210, looseness=10] node[midway, left] {$s_1$} (1);
            \draw[->] (1) to [out=315, in=225, looseness=1] node[midway, below] {$s_4$} (2);
            \draw[->] (2) to [out=135, in=45, looseness=1] node[midway, above] {$s_2$} (1);
            \draw[->] (2) to [out=330, in=30, looseness=10] node[midway, right] {$s_3$} (2);
        \end{tikzpicture}\\ b}
    \end{minipage}
    \caption{Simplicial (a) and dual simplicial (b) systems for the Gauss algorithm.}
    \label{fig:ss_gauss}
\end{figure} \\
Consider the simplicial system $G$ for the Gauss algorithm (see Fig.~\ref{fig:ss_gauss}). We need to define an acceleration of the win-lose induction on $G$. Recall that given 
a measurable map 
$$T: X \to X$$
and a measurable set $A \subset X$ of positive measure, the \textit{first return map}
$$T_A: A \to A$$
is defined as
\begin{align*}
    T_A(\textbf{x}) = T^{n(\textbf{x})}(\textbf{x}), \quad n(\textbf{x}) = \min\{n \geq 1 : T^n(\textbf{x}) \in A\}.
\end{align*}
Let
\begin{align*}
    D := \{v_0\}\times \Lambda_{\text{id}}^1 \sqcup \{v_1\}\times \Lambda^1_{(01)} = D_0\sqcup D_1,
    \quad\text{where }\Lambda^1_{(01)}= \big\{x\in\mathbb{R}_+^2\mid x_1>x_0\big\}.
\end{align*}
Then the first return map $\widetilde{T}_D$ of the linear win-lose induction $\widetilde{T}$ on the set $D$ is semiconjugated to $F$ by the projection $\pi_G$(not injective, unlike a conjugacy in Farey's case):\\ \\
\begin{minipage}[h]{0.49\linewidth}
    \centering
    $\pi_G: D \to \Lambda_{\text{id}}^1$, \\
    \medskip
    $\pi_G\big(\{v_0\}\times(x_0,x_1)\big) = (x_0,x_1)$,\\
    $\pi_G\big(\{v_1\}\times(x_0,x_1)\big) = (x_1,x_0)$.
\end{minipage}
\hfill
\begin{minipage}[h]{0.49\linewidth}
    \center{\begin{tikzpicture}[node distance={25mm}]
        \node (1) {$D$}; 
        \node (2) [right of=1] {$D$};
        \node (3) [below of=1] {$\Lambda_{\text{id}}^1$};
        \node (4) [below of=2] {$\Lambda_{\text{id}}^1$};
        \draw[->] (1) -- node[midway, above] {$\widetilde{T}_D$} (2);
        \draw[->] (3) -- node[midway, above] {$F$} (4);
        \draw[->] (1) -- node[midway, left] {$\pi_G$} (3);
        \draw[->] (2) -- node[midway, right] {$\pi_G$} (4);
    \end{tikzpicture}}
\end{minipage} \\ \\
We construct again a dual simplicial system (see Fig.~\ref{fig:ss_gauss}b) and a realization $\widehat{T}$ of the natural extension for $\widetilde{T}$. A solution to the system of linear equations~(\ref{eq:1}) is $\Omega_{v_0}=\big[\begin{smallmatrix} 1 & 1 \\ 1 & 0 \end{smallmatrix}\big], \Omega_{v_1}=\big[\begin{smallmatrix} 0 & 1\\ 1 & 1 \end{smallmatrix}\big]$ for the labeling $(s_1,s_2,s_3,s_4) = (0,0,1,1)$, and the invariant densities for the linear and projective inductions are:
\begin{center}
    \begin{minipage}[t]{0.49\linewidth}
        \centering
        $h_{\widetilde{T}_D}: D \to \mathbb{R}_+$, \\
        $ $ \\
        $\displaystyle h_{\widetilde{T}_D}(v_0,\textbf{x}) = \frac{1}{2(x_0+x_1)x_0}\cdot\delta_{\{\textbf{x}\in\Lambda_{\text{id}}^1\}}$, \\
        $\displaystyle h_{\widetilde{T}_D}(v_1,\textbf{x}) = \frac{1}{2x_1(x_0+x_1)}\cdot\delta_{\{\textbf{x}\in\Lambda_{(12)}^1\}}$;
    \end{minipage}
    \hfill
    \begin{minipage}[t]{0.49\linewidth}
        \centering
        $h_{F}: \Lambda_{\text{id}}^1 \to \mathbb{R}_+$, \\
        $ $ \\
        $ $ \\
        $\displaystyle h_{F}(\textbf{x}) = (\pi_G)_*h_{\widetilde{T}_D}(\textbf{x})=\frac{1}{(x_0+x_1)x_0}$. \\
        $\displaystyle\frac{}{} $
    \end{minipage}
\end{center}
Then the invariant density for the Gauss algorithm is $\displaystyle h_f(1:x) = \frac{1}{2(1+x)}$, thereby recovering the classical Gauss measure up to a normalization constant.
\begin{remark}
    In Gauss' case, the system~(\ref{eq:1}) admits another solution symmetric to the one 
    above, namely $\Omega_{v_0}=\big[\begin{smallmatrix} 1 & 1 \\ 0 & 1 \end{smallmatrix}\big], \Omega_{v_1}=\big[\begin{smallmatrix} 1 & 0\\ 1 & 1 \end{smallmatrix}\big]$ for the labeling $(s_1,s_2,s_3,s_4) = (1,1,0,0)$, which yields the same invariant density. 
\end{remark}

\subsection{Selmer algorithm}

We now consider a first multidimensional example where we apply the graph method to recover its invariant density, namely the Selmer algorithm. Its invariant density was first given in \cite[Theorem~22]{schweiger}.
\subsubsection{Definition}
The Selmer algorithm is defined by subtracting the smallest coordinate of a vector from the largest one, see \cite{selmer}. The \textit{centered} subset 
$$\Delta_{\text{center}}^d = \big\{x \in \Delta^{d}\big|\;\; x_{\sigma_0}\leq x_{\sigma_{d-1}}+x_{\sigma_d}\text{ for }\sigma\in S_d\text{ s.t. }\textbf{x}\in\Lambda^{d+1}_{\sigma}\big\}\subset\Delta^d$$ 
is invariant under the Selmer algorithm, and for almost every $\textbf{x}\in\Delta^d$ after finite time $N\geq0$, $f^N(\textbf{x})\in\Delta^d_{\text{center}}$ (see \cite[Theorem~22]{schweiger}).

\begin{figure}
    \begin{minipage}[c]{1\textwidth}
    \center{\begin{tikzpicture}[node distance={25mm}]
    \fill[lightgray] (0,0) -- (-1,1.5) -- (1,1.5) -- cycle;
    \draw(-2,0)--(0,3)--(2,0)--cycle;
    \draw(0,0) -- (-1,1.5) -- (1,1.5)--cycle;
    \node at (-2,0)[below left]{$(1,0,0)$};
    \node at (2,0)[below right]{$(0,0,1)$};
    \node at (0,3)[above]{$(0,1,0)$};
    \node at (0,0)[below]{$(\frac{1}{2},0,\frac{1}{2})$};
    \node at (2.5,3.5)[right]{$\Delta_{\text{center}}^2$};
    \node at (-1,1.5)[above left]{$(\frac{1}{2},\frac{1}{2},0)$};
    \node at (1,1.5)[above right]{$(0,\frac{1}{2},\frac{1}{2})$};
    \draw[->] (2.5,3.5) to [out=180, in=40, looseness=1] (0,1);
    \end{tikzpicture}}
    \end{minipage}
    \caption{Invariant set for the projective Selmer algorithm in dimension $d=2$.}
    \label{fig:invset_selmer}
\end{figure}

For the Selmer algorithm, the matrix map is 
$$M=\Id + E_{\sigma_0,\sigma_d}\quad\quad\text{for }x\in\big(\mathbb{R}_+\cdot\Delta^d_{\text{center}}\big)\cap\Lambda^{d+1}_{\sigma}.$$
Its linear version is: 
\begin{center}
    $\displaystyle F: \mathbb{R}_+\cdot\Delta^{d}_{\text{center}}\to\mathbb{R}_+\cdot\Delta^{d}_{\text{center}}$, \\
    $ $ \\
    $F(\textbf{x}) = (x_0,\ldots,x_{\sigma_0}-x_{\sigma_d},\ldots,x_d)$ for $\textbf{x}\in\big(\mathbb{R}_+\cdot\Delta^d_{\text{center}}\big)\cap\Lambda^{d+1}_{\sigma}$,
\end{center}
and its projective version is:
\begin{center}
    $\displaystyle f: \Delta^d_{\text{center}}\to\Delta^d_{\text{center}}$, \\
    $ $ \\
    $f(\textbf{x}) = \big(1-x_{\sigma_d}\big)^{-1}(x_0,\ldots,x_{\sigma_0}-x_{\sigma_d},\ldots,x_d)$ for $\textbf{x}\in\Delta^d_{\text{center}}\cap\Lambda_{\sigma}^{d+1}$.
\end{center}
\subsubsection{Constructing a simplicial system}
Following \cite[$\mathsection$~5.2.2]{fougeronpp}, we construct a simplicial system for the \textit{Selmer algorithm}. Consider the directed labeled graph $G$ with $\frac{(d+1)!}{2}$ vertices, labeled as $(v_{\sigma})_{\sigma\in S_{d+1}}$; each vertex $v_{\sigma}$ has two outgoing edges: one with label $\sigma_0$ to vertex $v_{\sigma\cdot(0\ldots d-1)}$ and one with label $\sigma_d$ to vertex $v_{\sigma\cdot(0\ldots d)}$ (see Fig.~\ref{fig:ss_selmer}a). \\
\begin{figure}[ht]
    \begin{minipage}[ht]{0.49\linewidth}
        \center{\begin{tikzpicture}[node distance={40mm}, > = Stealth, very thick, main/.style = {draw, circle}] 
        \node[main] (1) {$v_{\sigma}$}; 
        \node (2) [right of=1] {$v_{\sigma\cdot(0\ldots d-1)}$};
        \node (3) [below of=1] {$v_{\sigma\cdot(0\ldots d)}$};
        \draw[->] (1) -- node[midway, above] {$\sigma_0$} (2);
        \draw[->] (1) -- node[midway, left] {$\sigma_{d}$} (3);
        \end{tikzpicture}  \\ a}
    \end{minipage}
    \hfill
    \begin{minipage}[ht]{0.49\linewidth}
        \center{\begin{tikzpicture}[node distance={32mm}, > = Stealth, very thick, main/.style = {draw, circle}] 
        \node[main] (1) {$v_1$}; 
        \node[main] (2) [below left of=1] {$v_0$};
        \node[main] (3) [below right of=1] {$v_2$};
        \draw[->] (2) to [out=90, in=180, looseness=1] node[midway, above] {$1$} (1);
        \draw[->] (2) -- node[midway, below] {$2$} (3);
        \draw[->] (3) to [out=225, in=315, looseness=1] node[midway, below] {$0$} (2);
        \draw[->] (3) to [out=90, in=0, looseness=1] node[midway, above] {$1$} (1);
        \draw[->] (1) -- node[midway, below] {$0$} (2);
        \draw[->] (1) -- node[midway, below] {$2$} (3);
        \end{tikzpicture}  \\ $ $ \\ b}
    \end{minipage}
    \caption{Simplicial system for the Selmer algorithm in a) arbitrary dimension $d$ and b) $d=2$.}
    \label{fig:ss_selmer}
\end{figure} \\
Consider the dimension $d=2$. The linear win-lose induction $\widetilde{T}$ on $G$ is conjugated to the linear version of the Selmer algorithm $F$ by the bijection $\varphi^2_S$:\\ \\
\begin{minipage}[ht]{0.70\linewidth}
\centering
$\varphi^2_S: V\times\mathbb{R}_+^3\to \mathbb{R}_+\cdot\Delta^2_{\text{center}},$\\
\medskip
$\varphi^2_S(v,\textbf{x})=P_v\cdot\textbf{x}$, where \\
\bigskip
$P_{v_0} = \begin{bmatrix} 1 & 1 & 1 \\ 1 & 0 & 1 \\ 1 & 1 & 0 \end{bmatrix}, P_{v_1} = \begin{bmatrix} 0 & 1 & 1 \\ 1 & 1 & 1 \\ 1 & 1 & 0 \end{bmatrix}, P_{v_2} = \begin{bmatrix} 0 & 1 & 1 \\ 1 & 0 & 1 \\ 1 & 1 & 1 \end{bmatrix}$,
\end{minipage}
\hfill
\begin{minipage}[ht]{0.28\linewidth}
\center{\begin{tikzpicture}[node distance={25mm}]
\node (1) {$V\times\mathbb{R}_+^3$}; 
\node (2) [right of=1] {$V\times\mathbb{R}_+^3$};
\node (3) [below of=1] {$\mathbb{R}_+\cdot\Delta^2_{\text{center}}$};
\node (4) [below of=2] {$\mathbb{R}_+\cdot\Delta^2_{\text{center}}$};
\draw[->] (1) -- node[midway, above] {$\widetilde{T}$} (2);
\draw[->] (3) -- node[midway, above] {$F$} (4);
\draw[->] (1) -- node[midway, left] {$\varphi^2_S$} (3);
\draw[->] (2) -- node[midway, right] {$\varphi^2_S$} (4);
\end{tikzpicture}}
\end{minipage} \\ \\ \\
i.e., the bijection $\varphi^2_S$ sends the pair $(v_{\sigma_1},\textbf{x})$ to the vector $$\displaystyle P_{v_{\sigma_1}}\cdot\textbf{x}\in \big\{\textbf{y}\in\mathbb{R}_+^3|\; y_{\sigma_0}+y_{\sigma_2} > y_{\sigma_1}>y_{\sigma_0},y_{\sigma_2}\big\}.$$

\begin{figure}
    \centering
    \begin{tikzpicture}[yscale=0.7,xscale=0.7]
\node at (-4.5,1.5){$\{v_0\}$};
\node at (-3,1.5){$\times$};
\fill[Apricot] (-2,0)--(0,3)--(1,1.5)--cycle;
\draw(-2,0)--(0,3)--(2,0)--cycle;
\draw(-2,0)--(1,1.5);
\node at (0,1.7){$1<2$};
\node[below] at (-0.15,0){$x_2=0$};
\node[above left, rotate=56.30993247402] at (-0.4,2.4){$x_1=0$};
\node[above right, rotate=303.690067526] at (0.4,2.4){$x_0=0$};
\node at (-2.5,3.5)[left]{$\mathbb{R}_+\cdot\Delta^{2}_{1(2)}$};
\draw[->] (-2.5,3.5) to [out=0, in=80, looseness=1.5] (0,2.5);

\node[above] at (4.5,1.5){$\widetilde{T}$};
\draw[->] (3,1.5) to (6,1.5);

\node at (7,1.5){$\{v_1\}$};
\node at (8.5,1.5){$\times$};
\fill[lightgray] (9.5,0)--(11.5,3)--(13.5,0)--cycle;
\draw(9.5,0)--(11.5,3)--(13.5,0)--cycle;
\node at (11.5,1){$\mathbb{R}_+\cdot\Delta^{2}$};
\node[below] at (11.35,0){$x_2=0$};
\node[above left, rotate=56.30993247402] at (11.1,2.4){$x_1=0$};
\node[above right, rotate=303.690067526] at (11.9,2.4){$x_0=0$};

\node[left] at (0,-2.25){$\varphi^2_S$};
\draw[->] (0,-1) to (0,-3.5);

\fill[Apricot] (-0.5,-6.25)--(-1,-5.5)--(0,-6)--cycle;
\draw (0,-7)--(-1,-5.5)--(1,-5.5)--cycle;
\draw(-2,-7)--(0,-4)--(2,-7)--cycle;
\draw (0,-7)--(-1,-5.5)--(0,-6)--cycle;
\draw (-0.5,-6.25)--(1,-5.5);
\node at (-2,-3.5)[left]{$\Lambda_{(021)}^3\cap\mathbb{R}_+\cdot\Delta^{2}_{\text{center}}$};
\draw[->] (-2,-3.5) to [out=0, in=80, looseness=1.5] (-0.5,-6);

\node[above] at (5.75,-5.5){$F$};
\draw[->] (4.25,-5.5) to (7.25,-5.5);

\fill[lightgray] (10.5,-5.5)--(12.5,-5.5)--(11.5,-6)--cycle;
\draw (11.5,-7)--(10.5,-5.5)--(12.5,-5.5)--cycle;
\draw(9.5,-7)--(11.5,-4)--(13.5,-7)--cycle;
\draw (10.5,-5.5)--(12.5,-5.5)--(11.5,-6)--cycle;
\draw (11.5,-7)--(11.5,-6);
\node at (13,-3.5)[right]{$\mathbb{R}_+\cdot P_{v_1}\Delta^{2}_{\text{center}}$};
\draw[->] (13,-3.5) to [out=180, in=90, looseness=1.5] (11.5,-5.75);

\node[right] at (11.5,-2.25){$\varphi^2_S$};
\draw[->] (11.5,-1) to (11.5,-3.5);
\end{tikzpicture}
    \caption{The linear Selmer algorithm $F$ in dimension $d=2$ and the corresponding linear win-lose induction $\widetilde{T}$.}
    \label{fig:ss_selmer+alg}
\end{figure}

In arbitrary dimension, the matrices of $\varphi^d_S$ are:
\begin{center}
    $\displaystyle P_{v_{\sigma}} = \begin{bmatrix} P_{v_{\sigma}}^0 \\ \vdots \\ P_{v_{\sigma}}^d \end{bmatrix},\quad \big(P_{v_{\sigma}}^j\big)_k = \begin{cases}
    0 & \text{if }j=k=\sigma_{d-1}\text{ or }\sigma_d, \\
    1 & \text{if }j=\sigma_{m}, k=\sigma_n,\;\; 0<m<n\leq d, \\
    2 & \text{otherwise},
\end{cases}$
\end{center}
i.e., the bijection $\varphi^d_{S}$ sends the pair $(v_{\sigma},\textbf{x})$ to the vector in $\displaystyle \Lambda_{\sigma\cdot(0\ldots d-1)}^{d+1}\sqcup\Lambda_{\sigma\cdot(0\ldots d)}^{d+1}.$ 

\subsubsection{Invariant measure for $d=2$}

Following the approach described in Section~\ref{sec:measure}, we construct a dual simplicial system and a realization of the natural extension for $\widetilde{T}$. The system of linear equations~(\ref{eq:1}) is: 

\begin{figure}[ht]
    \center{ 
    \begin{tikzpicture}[node distance={26mm}, > = Stealth, very thick, main/.style = {draw, circle}] 
        \node[main] (1) {$v_1$}; 
        \node[main] (2) [below left of=1] {$v_0$};
        \node[main] (3) [below right of=1] {$v_2$};
        \draw[->] (1) to [out=180, in=90, looseness=1] node[midway, above] {$s_3$} (2);
        \draw[->] (3) -- node[midway, below] {$s_5$} (2);
        \draw[->] (2) to [out=315, in=225, looseness=1] node[midway, below] {$s_2$} (3);
        \draw[->] (1) to [out=0, in=90, looseness=1] node[midway, above] {$s_4$} (3);
        \draw[->] (2) -- node[midway, below] {$s_1$} (1);
        \draw[->] (3) -- node[midway, below] {$s_6$} (1);
    \end{tikzpicture}}
    \caption{Dual simplicial system for the Selmer algorithm in dimension $d=2$.}
    \label{fig:ssdual2_selmer}
\end{figure} 

\medskip
\begin{center}
    $\begin{cases}
        M_{1(2)}^T \cdot \Omega_{v_0} = \Omega_{v_1} \cdot M_{s_3(s_4)} \\
        M_{2(1)}^T \cdot \Omega_{v_0} = \Omega_{v_2} \cdot M_{s_5(s_6)} \\
        M_{0(2)}^T \cdot \Omega_{v_1} = \Omega_{v_0} \cdot M_{s_1(s_2)} \\
        M_{2(0)}^T \cdot \Omega_{v_1} = \Omega_{v_2} \cdot M_{s_6(s_5)} \\
        M_{0(1)}^T \cdot \Omega_{v_2} = \Omega_{v_0} \cdot M_{s_2(s_1)} \\
        M_{1(0)}^T \cdot \Omega_{v_2} = \Omega_{v_1} \cdot M_{s_4(s_3)} 
    \end{cases},$
\end{center}
and its solution is 
$$\Omega_{v_0}=\Big[\begin{smallmatrix} 1 & 1 & 1 \\ 1 & 0 & 1 \\ 1 & 1 & 0 \end{smallmatrix}\Big], \quad \Omega_{v_1}=\Big[\begin{smallmatrix} 0 & 1 & 1 \\ 1 & 1 & 1 \\ 1 & 1 & 0 \end{smallmatrix}\Big],\quad \Omega_{v_2}=\Big[\begin{smallmatrix} 0 & 1 & 1 \\ 1 & 0 & 1 \\ 1 & 1 & 1 \end{smallmatrix}\Big]$$ 
for the labeling $(s_1,s_2,s_3,s_4,s_5,s_6) = (1,2,0,2,0,1)$ (this solution is unique up to the permutation of columns of the matrices $\Omega_{v_i}$). The invariant densities for the linear and projective inductions are: 
\begin{center}
    \begin{minipage}[ht]{0.59\linewidth}
        \centering
        $h_{\widetilde{T}}: V\times\mathbb{R}_+^3 \to \mathbb{R}_+$, \\
        $ $ \\
        $\displaystyle h_{\widetilde{T}}(v_0,\textbf{x}) = \frac{1}{6(x_0+x_1+x_2)(x_0+x_1)(x_0+x_2)}$, \\
        $ $ \\
        $\displaystyle h_{\widetilde{T}}(v_1,\textbf{x}) = \frac{1}{6(x_1+x_2)(x_0+x_1+x_2)(x_0+x_1)}$, \\
        $ $ \\
        $\displaystyle h_{\widetilde{T}}(v_2,\textbf{x}) = \frac{1}{6(x_1+x_2)(x_0+x_2)(x_0+x_1+x_2)}$;
    \end{minipage}
    \hfill
    \begin{minipage}[ht]{0.39\linewidth}
        \centering
        $h_{T}: V\times\Delta^2 \to \mathbb{R}_+$, \\
        $ $ \\
        $\displaystyle h_{T}(v_0,\textbf{x}) = \frac{1}{6(1-x_2)(1-x_1)}$, \\
        $ $ \\
        $\displaystyle h_{T}(v_1,\textbf{x}) = \frac{1}{6(x_1+x_2)(1-x_2)}$, \\
        $ $ \\
        $\displaystyle h_{T}(v_2,\textbf{x}) = \frac{1}{6(x_1+x_2)(1-x_1)}$.
    \end{minipage}
\end{center}
Then the invariant density for the linear version of the Selmer algorithm is
\begin{align*}
    h_{F}(\mathbf{x}) 
    &= (\varphi^2_S)_*h_{\widetilde{T}}(\mathbf{x}) \\
    &= h_{\widetilde{T}}(v_0,P_{v_0}^{-1}\mathbf{x})\cdot\delta_{\mathbb{R}_+\cdot P_{v_0}\Delta^{2}_{\text{center}}} 
     + h_{\widetilde{T}}(v_1,P_{v_1}^{-1}\mathbf{x})\cdot\delta_{\mathbb{R}_+\cdot P_{v_1}\Delta^{2}_{\text{center}}} \\
    &\quad + h_{\widetilde{T}}(v_2,P_{v_2}^{-1}\mathbf{x})\cdot\delta_{\mathbb{R}_+\cdot P_{v_2}\Delta^{2}_{\text{center}}} \\
    &= \frac{1}{6x_0x_1x_2}\cdot\delta_{\mathbb{R}_+\cdot\Delta^{2}_{\text{center}}},
\end{align*}
and the invariant density for the projective version is 
$$\displaystyle h_{f}(\textbf{x}) = \frac{1}{6(1-x_1-x_2)x_1x_2} \quad\text{for }\textbf{x}\in \Delta^2_{\text{center}}.$$

\subsubsection{Higher dimensions}
\begin{proposition}
    There is an absolutely continuous (with respect to the Lebesgue measure) invariant measure with density $h_{S}: \mathbb{R}_+^{d+1} \to \mathbb{R}_+$ for the Selmer algorithm in dimension $d$, where
    \begin{center}
        $\displaystyle h_{S}(\textbf{x})=\frac{1}{x_0\ldots x_d}\cdot\delta_{\mathbb{R}_+\cdot\Delta^d_{\text{center}}}$.
    \end{center}
\end{proposition}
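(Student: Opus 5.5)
The plan is to verify invariance of $h_S$ directly through the transfer operator of the linear Selmer map $F$ on the cone $C:=\mathbb{R}_+\cdot\Delta^d_{\text{center}}$. This avoids having to solve the system~(\ref{eq:1}) for the full simplicial system in dimension $d$. The graph method would suggest $\Omega_{v_\sigma}=P_{v_\sigma}$, exactly as in the case $d=2$, but checking~(\ref{eq:1}) in general requires guessing the dual labeling. The direct check is shorter.

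Since every branch matrix $M=\Id+E_{\sigma_0,\sigma_d}$ has determinant $1$, it suffices to show, for almost every $\textbf{y}\in C$,
$$\sum_{\textbf{x}\in C,\;F(\textbf{x})=\textbf{y}} h_S(\textbf{x}) = h_S(\textbf{y}).$$
Invariance of the measure $h_S\,d\textbf{x}$ under $F$ then follows by the change of variables on each branch. The same argument applies to the projective map $f$ after projecting the pseudo-homogeneous measure as in Section~\ref{sec:measure}.

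\textbf{Step 1: describe the preimages.} A preimage of $\textbf{y}$ is determined by an ordered pair $(j,k)$ of distinct indices with $x_j=y_j+y_k$ and $x_i=y_i$ for $i\neq j$. Validity requires three conditions: in $\textbf{x}$ the index $j$ is the maximum, the index $k$ is the minimum, and $\textbf{x}\in C$, i.e.
$$x_{\max}\leq x_{\text{second smallest}}+x_{\min}.$$
Write $y_{\tau_0}>y_{\tau_1}>\dots>y_{\tau_d}$. I expect the claim to be that the valid pairs are exactly $(j,k)=(\tau_d,\tau_{d-1})$ and $(j,k)=(\tau_{d-1},\tau_d)$. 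In both cases $x_j=y_{\tau_{d-1}}+y_{\tau_d}\geq y_{\tau_0}$ because $\textbf{y}\in C$. Moreover $x_k$ is the minimum of the remaining coordinates, and the centering inequality for $\textbf{x}$ reduces to an obviously true inequality such as $y_{\tau_{d-1}}+y_{\tau_d}\leq y_{\tau_{d-2}}+y_{\tau_d}$. For any other pair $(j,k)$, either $k$ is not the minimum of $\textbf{x}$, or $x_j$ fails to be maximal, or the centering inequality fails; this mirrors the $d=2$ computation, where the pair $(\tau_0,\tau_2)$ violates centering since $y_{\tau_0}>y_{\tau_1}$.

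\textbf{Step 2: sum the densities.} For a valid pair,
$$h_S(\textbf{x})=h_S(\textbf{y})\,\frac{y_j}{y_j+y_k}.$$
Summing over the two preimages gives
$$h_S(\textbf{y})\Bigl(\frac{y_{\tau_d}}{y_{\tau_d}+y_{\tau_{d-1}}}+\frac{y_{\tau_{d-1}}}{y_{\tau_{d-1}}+y_{\tau_d}}\Bigr)=h_S(\textbf{y}),$$
which is the required identity.

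The main obstacle is Step 1: an exhaustive and careful case analysis showing that no other pair $(j,k)$ yields a preimage in $C$ (up to boundary sets of measure zero). In particular one must handle the possibility that the new coordinate $y_j$ drops below other coordinates, which changes which index plays the role of $\tau_{d-1}$ or $\tau_d$. I would organize this by the position of $j$ and of $k$ in the ordering $\tau$, using that $x_j\geq x_i$ for all $i$ forces $y_j+y_k\geq y_{\tau_0}$. Finally, absolute continuity is immediate, since $h_S$ is locally integrable on the interior of $C$ away from the coordinate hyperplanes.
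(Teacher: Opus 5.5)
Your argument is correct, and it takes a genuinely different route from the paper.

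\textbf{What the paper does.} The paper never checks a transfer-operator identity. It stays inside the graph method:
\begin{itemize}
\item it asserts a dual labeling of the Selmer simplicial system for which $\Omega_{v_{\text{id}}}=P_{v_{\text{id}}}^T$ solves $M_e^T\Omega_v=\Omega_{v'}M_{e'}$;
\item it reads off $h_{\widetilde{T}}(v,\textbf{x})=\frac{1}{(d+1)!}\prod_i(\textbf{x}^T\Omega_v)_i^{-1}$ from the natural extension;
\item it pushes this through the conjugacy $\textbf{x}\mapsto P_v\textbf{x}$. Since $(P_v^{-1})^T\Omega_v=\Id$, every piece yields $1/(x_0\cdots x_d)$.
\end{itemize}
That approach derives the density instead of presupposing it, and it produces an explicit invariant domain for the natural extension. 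However, in general dimension the crucial fact that the displayed labeling and matrix solve the whole system is stated rather than verified.

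\textbf{What your route buys.} Your Perron--Frobenius verification makes the opposite trade. It needs the candidate density as input and gives no natural extension. In exchange, every step is elementary and checkable for all $d$.

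\textbf{Closing Step 1.} The step you call the main obstacle takes only a few lines.
\begin{itemize}
\item Since $x_i=y_i$ for $i\neq j$ and $k$ must be the minimum of $\textbf{x}$, $k$ is the minimum of $\{y_i\}_{i\neq j}$. So $k=\tau_d$ unless $j=\tau_d$, and $j=\tau_d$ forces $k=\tau_{d-1}$.
\item If $k=\tau_d$ and $j\notin\{\tau_{d-1},\tau_d\}$, the second smallest coordinate of $\textbf{x}$ is $y_{\tau_{d-1}}<y_j$. The centering inequality $y_j+y_{\tau_d}\leq y_{\tau_{d-1}}+y_{\tau_d}$ then fails.
\end{itemize}
Equivalently, argue in the forward direction. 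For $\textbf{x}\in C$ one has $x_{\sigma_0}-x_{\sigma_d}\leq x_{\sigma_{d-1}}$. Hence the indices $\sigma_0,\sigma_d$ carry the two smallest values of $F(\textbf{x})$, and their sum $x_{\sigma_0}$ exceeds the new maximum $x_{\sigma_1}$. This also proves $F(C)\subseteq C$. You use that fact implicitly, since it is what lets you check the identity only for $\textbf{y}\in C$.

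Your two preimages also explain the structure of the paper's Selmer graph: it has $(d+1)!/2$ vertices, each with exactly two incoming edges. Each vertex records the ordering of the coordinates only up to swapping the two smallest.
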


\begin{proof}
    Consider the linear win-lose induction $\widetilde{T}$ on $G$, i.e., on the simplicial system for the Selmer algorithm in dimension $d$. The dual labeling for the dual simplicial system $S$ is given in Figure~\ref{fig:ssduald_selmer}.
    
    \begin{figure}
        \begin{tikzpicture}[node distance={40mm}, > = Stealth, very thick, main/.style = {draw, circle}] 
            \node (1) {$v_{\sigma}$}; 
            \node[main] (2) [right of=1] {$v_{\sigma\cdot(0\ldots d)}$};
            \node (3) [below of=1] {$ $};
            \node (4) [below of=2] {$ $};
            \draw[->] (2) -- node[midway, above] {$\sigma_1$} (1);
            \draw[->] (3) -- node[midway, left] {$\sigma_{1}$} (1);
            \draw[->] (2) -- node[midway, right] {$\sigma_{d}$} (4);
        \end{tikzpicture}
        \caption{Dual simplicial system for the Selmer algorithm in dimension $d$.}
        \label{fig:ssduald_selmer}
    \end{figure}
    
    As we mentioned in Section~\ref{ssec:calculations}, the solution to the system of equations~(\ref{eq:1}) is uniquely determined by the matrix $\Omega_{v_{\text{id}}}$. The matrix
    \begin{center}
        $\displaystyle \Omega_{v_{\text{id}}} = \begin{bmatrix}
            0&1&1&\ldots&1 \\ 1&1&\ldots&1&1 \\ 1&2&1&\vdots& 1\\1&2&2&1&\vdots \\1&\ldots&\ldots&1&0
        \end{bmatrix} = P_{v_{\text{id}}}^T$
    \end{center}
    \medskip
    is the solution to the system of equations~(\ref{eq:2}), and 
    $$\displaystyle h_{\widetilde{T}}(v_{\text{id}},\textbf{x}) = \frac{1}{(d+1)!}\prod_{i=0}^d\frac{1}{\big(\textbf{x}^T\Omega_{v_{\text{id}}}\big)_i}.$$
    Then the invariant density for the vectors $\textbf{x}\in\Big(\Lambda_{\sigma\cdot(01\ldots d-1)}^{d+1}\sqcup\Lambda_{\sigma\cdot(01\ldots d)}^{d+1}\Big)\cdot\delta_{\mathbb{R}_+\cdot\Delta^d_{\text{center}}}$ is
    $$\displaystyle h_S(\textbf{x}) = \frac{1}{(d+1)!}\prod_{i=0}^d\frac{1}{\Big(\textbf{x}^T\big(P_{v_{\text{id}}}^{-1}\big)^T\Omega_{v_{\text{id}}}\Big)_i} = \frac{1}{(d+1)!x_0x_1\ldots x_d}.$$
    For the other parts of the cone $\mathbb{R}^{d+1}_+$, the calculations are the same. Hence, we have found the required density (up to a constant). 
\end{proof}

\begin{remark}
    For the non-homogeneous ordered Selmer algorithm acting on $\mathbb{P}\Lambda^{d+1}_+$, we recover the invariant density $h_f(\textbf{x}) = \frac{1}{x_1\ldots x_d} \quad\text{for }\textbf{x}\in \Delta_{\text{center}}^d $, which is consistent with \cite[Theorem~22]{schweiger}.
\end{remark}

\subsection{Triangle algorithm}\label{ssec:triangle}

\subsubsection{Definition}

Let us define the following algorithm in its linear form: for a given vector, we subtract the second largest, and then the smallest coordinate, as many times as possible from the largest one. The version we consider below was recently introduced in \cite{garritynew}, where its construction was motivated by the connection to integer partitions. In \cite{garritynew}, the formula for the invariant ergodic measure of this version was also given, using transfer operators. We construct a simplicial system for this algorithm called the \textit{Triangle algorithm} and find its invariant measure using our graph method. The slow linear version of this algorithm is: 
$$\displaystyle F: \mathbb{R}^{d+1}_+ \to \mathbb{R}^{d+1}_+,$$
$$ x_{\sigma_0} \mapsto
    \begin{cases}
    x_{\sigma_0} - x_{\sigma_1} & \text{if }x_{\sigma_0} < 2x_{\sigma_d}, \\
    x_{\sigma_0} - x_{\sigma_d} & \text{otherwise}
\end{cases}\quad\quad\text{for }x \in \Lambda^{d+1}_{\sigma};$$
and its accelerated version is:
$$\displaystyle F_{\text{acc}}: \mathbb{R}^{d+1}_+ \to \mathbb{R}^{d+1}_+,$$
$$ x_{\sigma_0} \mapsto x_{\sigma_0} - x_{\sigma_1} - kx_{\sigma_d}\quad\quad \text{if } \displaystyle x_{\sigma_1}+kx_{\sigma_d} < x_{\sigma_0} < x_{\sigma_1}+(k+1)x_{\sigma_d}\quad\quad \text{ for }x \in \Lambda^{d+1}_{\sigma}.$$
The matrix map for the accelerated version is: 
$$\displaystyle M=\Id + E_{\sigma_0,\sigma_1} + kE_{\sigma_0,\sigma_d}\quad\quad\text{for }x\in\Lambda^{d+1}_{\sigma}.$$

\begin{remark}
    The slow and accelerated versions of the Triangle algorithm are related in the same way as the Farey and Gauss algorithms (see Sections~\ref{ssec:farey} and \ref{ssec:gauss}).
\end{remark} 

\subsubsection{Constructing a simplicial system}\label{ssec:ss_triangle}

Consider $d!$ cycles on $d+1$ vertices; we label these cycles as $\big\{C_{\sigma}|\;\sigma\in S_{d+1}, \sigma_0=0\big\}$. On each cycle $C_{\sigma}$, the vertices are labeled as $(v_{\sigma,\sigma_0},\ldots, v_{\sigma,\sigma_d})$, and the vertex $v_{\sigma,\sigma_i}$ has two outgoing edges: the loop with label $\sigma_i$ and the edge with label $\sigma_{i+1}$ to the vertex $v_{\sigma,\sigma_{i+1}}$, where the index $i$ is taken by $\text{mod }d+2$  (see Fig.~\ref{fig:ss_triangle}). \\
\begin{figure}[ht]
    \begin{minipage}[ht]{0.49\linewidth}
        \center{\begin{tikzpicture}[node distance={40mm}, > = Stealth, very thick] 
            \node (1) at (0,2) {$v_{\text{id},0}$};
            \node (2) at (-1.73,1) {$v_{\text{id},1}$};
            \node (3) at (-1.73,-1) {$v_{\text{id},2}$};
            \node (4) at (0,-2) {$v_{\text{id},3}$};
            \node (5) at (1.73,-1) {$ $};
            \node (6) at (1.73,1) {$v_{\text{id},d}$};
            \draw[->] (1) -- node[midway, above] {$1$} (2);
            \draw[->] (2) -- node[midway, left] {$2$} (3);
            \draw[->] (3) -- node[midway, below left] {$3$} (4);
            \draw[->][thick, dashed] (4) -- node[midway, below] {$4$} (5);
            \draw[->][thick, dashed] (5) -- node[midway, right] {$d$} (6);
            \draw[->] (6) -- node[midway, above right] {$0$} (1);
            \draw[->] (1) to [out=60, in=120, looseness=5] node[midway, above] {$0$} (1);
            \draw[->] (2) to [out=150, in=210, looseness=5] node[midway, left] {$1$} (2);
            \draw[->] (3) to [out=150, in=210, looseness=5] node[midway, left] {$2$} (3);
            \draw[->] (4) to [out=240, in=300, looseness=5] node[midway, below] {$3$} (4);
            \draw[->] (6) to [out=330, in=30, looseness=5] node[midway, right] {$d$} (6);
        \end{tikzpicture}  \\ a}
    \end{minipage}
    \hfill
    \begin{minipage}[ht]{0.49\linewidth}
        \center{\begin{tikzpicture}[node distance={32mm}, > = Stealth, very thick, main/.style = {draw, circle}] 
        \node (1) {$v_{\text{id},0}$}; 
        \node (2) [below left of=1] {$v_{\text{id},1}$};
        \node (3) [below right of=1] {$v_{\text{id},2}$};
        \draw[->] (2) to [out=150, in=210, looseness=5] node[midway, left] {$1$} (2);
        \draw[->] (2) -- node[midway, below] {$2$} (3);
        \draw[->] (3) to [out=330, in=30, looseness=5] node[midway, right] {$2$} (3);
        \draw[->] (1) to [out=60, in=120, looseness=5] node[midway, above] {$0$} (1);
        \draw[->] (1) -- node[midway, above left] {$1$} (2);
        \draw[->] (3) -- node[midway, above right] {$0$} (1);
        \end{tikzpicture}  \\ $ $ \\$ $ \\$ $ \\$ $ \\ b}
    \end{minipage}
    \caption{The cycle $C_{\text{id}}$ of the simplicial system for the Triangle algorithm in a) arbitrary dimension $d$ and b) $d=2$.}
    \label{fig:ss_triangle}
\end{figure}\\
Consider the dimension $d=2$. The linear win-lose induction $\widetilde{T}$ on $G$ is conjugated to the slow version $F$ by the bijection $\varphi^2_{\text{Tr}}$:\\ \\
\begin{minipage}[h]{0.70\linewidth}
    \centering
    $\varphi^2_{\text{Tr}}: V\times\mathbb{R}_+^3\to \mathbb{R}_+^3,$ \\
    \medskip
    $\varphi^2_{\text{Tr}}(v_{\sigma,\sigma_i},\textbf{x})=P_{v_{\sigma,\sigma_i}}\cdot\textbf{x}$, where \\
    $ $ \\
    $P_{v_{(012),0}} = \begin{bmatrix} 1 & 0 & 0 \\ 1 & 1 & 1 \\ 1 & 0 & 1 \end{bmatrix}, P_{v_{(012),1}} = \begin{bmatrix}     1 & 1 & 0 \\ 0 & 1 & 0 \\ 1 & 1 & 1 \end{bmatrix},$\\
    $ $ \\
    $ $ \\
    $P_{v_{(012),2}} = \begin{bmatrix} 1 & 1 & 1 \\ 0 & 1 & 1 \\ 0 & 0 & 1 \end{bmatrix}, P_{v_{(021),0}} = \begin{bmatrix} 1 & 0 & 0 \\ 1 & 1 & 0 \\ 1 & 1 & 1 \end{bmatrix}$,
    \\
    $ $ \\
    $ $ \\
    $P_{v_{(021),2}} = \begin{bmatrix} 1 & 0 & 1 \\ 1 & 1 & 1 \\ 0 & 0 & 1 \end{bmatrix}, P_{v_{(021),1}} = \begin{bmatrix} 1 & 1 & 1 \\ 0 & 1 & 0 \\ 0 & 1 & 1 \end{bmatrix}$,
\end{minipage}
\hfill
\begin{minipage}[ht]{0.28\linewidth}
    \center{\begin{tikzpicture}[node distance={25mm}]
        \node (1) {$V\times\mathbb{R}_+^3$}; 
        \node (2) [right of=1] {$V\times\mathbb{R}_+^3$};
        \node (3) [below of=1] {$\mathbb{R}_+^3$};
        \node (4) [below of=2] {$\mathbb{R}_+^3$};
        \draw[->] (1) -- node[midway, above] {$\widetilde{T}$} (2);
        \draw[->] (3) -- node[midway, above] {$F$} (4);
        \draw[->] (1) -- node[midway, left] {$\varphi^2_{\text{Tr}}$} (3);
        \draw[->] (2) -- node[midway, right] {$\varphi^2_{\text{Tr}}$} (4);
    \end{tikzpicture}}
\end{minipage} \\ \\ \\
i.e., the bijection $\varphi^2_{\text{Tr}}$ sends the pair $(v_{\sigma,\sigma_i},\textbf{x})$ to the vector $\displaystyle P_{v_{\sigma,\sigma_i}}\cdot\textbf{x}\in \Lambda_{\sigma\cdot(\sigma_{i-2}\sigma_{i-1}\sigma_{i})}^{3}.$ \\ \\
In arbitrary dimension, the matrices of $\varphi^d_{\text{Tr}}$ are:
\medskip
\begin{center}
    $P_{v_{\sigma,\sigma_i}} = \begin{bmatrix} P_{v_{\sigma,\sigma_i}}^0 \\ \vdots \\ P_{v_{\sigma,\sigma_i}}^d \end{bmatrix},\quad \big(P_{v_{\sigma,\sigma_i}}^j\big)_k = \begin{cases}
    0 & \text{if }\sigma'_k > \sigma'_j, \\
    1 & \text{otherwise},
    \end{cases}$\\
    $ $ \\
    $ $ \\
    $\text{where }\sigma' = \sigma\cdot(\sigma_{i-d}\ldots\sigma_{i-1}\sigma_{i})$,
\end{center}
i.e., the projection $\varphi^d_{\text{Tr}}$ sends the pair $(v_{\sigma,\sigma_i},\textbf{x})$ to the vector $\displaystyle P_{v_{\sigma,\sigma_i}}\cdot\textbf{x}\in \Lambda_{\sigma\cdot(\sigma_{i-d}\ldots\sigma_{i-1}\sigma_{i})}^{d+1}.$ \\ \\
To construct the projection for the accelerated version $F_{\text{acc}}$, we need (as in Gauss' case) to consider an acceleration of the win-lose induction on $G$. For the cycle $C_{\sigma}$, let 
$$D_{\sigma} := \bigsqcup_{i=0}^d\{v_{\sigma,\sigma_i}\}\times \big(
\mathbb{R}_+\cdot\Delta^d_{\sigma_{i+1}(\sigma_i)}\big)= \bigsqcup_{i=0}^d D_{\sigma,\sigma_i}.$$
The first return map $\widetilde{T}_D$ of the linear win-lose induction $\widetilde{T}$ to $D$ is conjugated to the accelerated version $F_{\text{acc}}$ by the bijection

\begin{center}
    $\pi^d_{\text{Tr}_{\text{acc}}}: D\to \mathbb{R}_+^{d+1},$\\
    \medskip
    $\pi^d_{\text{Tr}_{\text{acc}}}(v_{\sigma,\sigma_i},\textbf{x})=\pi^d_{\text{Tr}}\big({v_{\sigma,\;\sigma_i}},M^{-1}_{\sigma_{i+1}\sigma_i}\textbf{x}\big)$.
\end{center}

\begin{figure}
    \centering
    \begin{tikzpicture}[yscale=0.7,xscale=0.7]
        \fill[lightgray] (-2,0)--(0,0)--(0,3)--cycle;
        \fill[lightgray] (7,1.5)--(8,3)--(8,1)--cycle;
        \node at (-0.9,0.3){$0>1$};
        \node[below] at (-0.15,0){$x_2=0$};
        \node[above left, rotate=56.30993247402] at (-0.4,2.4){$x_1=0$};
        \node[above right, rotate=303.690067526] at (0.4,2.4){$x_0=0$};
        \draw(-2,0)--(0,3)--(2,0)--cycle;
        \draw(0,3)--(0,0);
        \draw(6,0)--(8,3)--(10,0)--cycle;
        \draw(8,3)--(8,0);
        \draw(9,1.5)--(6,0);
        \draw(7,1.5)--(10,0);
        \node[above] at (4,1.5) {${\pi^2_{C}}_{|_{v_{\text{Id},0}}}$};
        \draw[->] (3,1.5) to (5,1.5);
        \node at (-2.5,3.5)[left]{$\mathbb{R}_+\cdot\Delta^{2}_{1(0)}$};
        \draw[->] (-2.5,3.5) to [out=0, in=70, looseness=1.5] (-0.5,1);
        \node at (-4.5,1.5){$\{v_{\text{Id},0}\}$};
        \node at (-3,1.5){$\times$};
        \node[below] at (7.85,0){$x_2=0$};
        \node[above left, rotate=56.30993247402] at (7.6,2.4){$x_1=0$};
        \node[above right, rotate=303.690067526] at (8.4,2.4){$x_0=0$};
        \node at (10.5,3.5)[right]{$\Lambda^3_{(201)}$};
        \draw[->] (10.5,3.5) to [out=180, in=110, looseness=1.5] (7.6,1.7);
    \end{tikzpicture}
    \caption{The projection of the win-lose induction for the Triangle algorithm in dimension $d=2$.}
    \label{fig:proj2_triangle}
\end{figure}

\subsubsection{Invariant measure for $d=2$}
The solution to the system of linear equations~(\ref{eq:1})
\begin{figure}[ht]
    \begin{minipage}[ht]{0.49\linewidth}
        \center{\begin{tikzpicture}[node distance={26mm}, > = Stealth, very thick, main/.style = {draw, circle}] 
            \node (1) {$v_{\text{id},0}$}; 
            \node (2) [below left of=1] {$v_{\text{id},1}$};
            \node (3) [below right of=1] {$v_{\text{id},2}$};
            \draw[->] (2) to [out=210, in=150, looseness=5] node[midway, left] {$s_5$} (2);
            \draw[->] (3) -- node[midway, below] {$s_4$} (2);
            \draw[->] (3) to [out=30, in=330, looseness=5] node[midway, right] {$s_3$} (3);
            \draw[->] (1) to [out=120, in=60, looseness=5] node[midway, above] {$s_1$} (1);
            \draw[->] (2) -- node[midway, above left] {$s_6$} (1);
            \draw[->] (1) -- node[midway, above right] {$s_2$} (3);
        \end{tikzpicture}}
    \end{minipage}
    \hfill
    \begin{minipage}[ht]{0.49\linewidth}
        \center{\begin{tikzpicture}[node distance={26mm}, > = Stealth, very thick, main/.style = {draw, circle}] 
            \node (1) {$v_{(021),0}$}; 
            \node (2) [below left of=1] {$v_{(021),2}$};
            \node (3) [below right of=1] {$v_{(021),1}$};
            \draw[->] (2) to [out=210, in=150, looseness=5] node[midway, left] {$s_{11}$} (2);
            \draw[->] (3) -- node[midway, below] {$s_{10}$} (2);
            \draw[->] (3) to [out=30, in=330, looseness=5] node[midway, right] {$s_9$} (3);
            \draw[->] (1) to [out=120, in=60, looseness=5] node[midway, above] {$s_7$} (1);
            \draw[->] (2) -- node[midway, above left] {$s_{12}$} (1);
            \draw[->] (1) -- node[midway, above right] {$s_8$} (3);
        \end{tikzpicture}}
    \end{minipage}
    \caption{Dual simplicial system for the Triangle algorithm in dimension $d=2$.}
    \label{fig:ssdual2_triangle}
\end{figure} 
is given by the labeling $(s_1,\ldots,s_{12}) = (0,2,2,1,1,0,0,1,1,2,2,0)$ of the dual simplicial system (see Fig.~\ref{fig:ssdual2_triangle}). This solution is unique up to the permutation of columns of the matrices $\Omega_{v}$: 
\begin{center}
    $\Omega_{v_{(012),0}} = \begin{bmatrix} 1 & 1 & 1 \\ 1 & 0 & 0 \\ 1 & 1 & 0 \end{bmatrix}, \;\;\Omega_{v_{(012),1}} = \begin{bmatrix}     0 & 1 & 1 \\ 1 & 1 & 1 \\ 0 & 1 & 0 \end{bmatrix},\;\; \Omega_{v_{(012),2}} = \begin{bmatrix} 0 & 0 & 1 \\ 1 & 0 & 1 \\ 1 & 1 & 1 \end{bmatrix}$,\\
    $ $ \\
    $ $ \\
    $\Omega_{v_{(021),0}} = \begin{bmatrix} 1 & 1 & 1 \\ 1 & 0 & 1 \\ 1 & 0 & 0 \end{bmatrix},\;\; \Omega_{v_{(021),2}} = \begin{bmatrix} 0 & 1 & 1 \\ 0 & 0 & 1 \\ 1 & 1 & 1 \end{bmatrix},\;\; \Omega_{v_{(021),1}} = \begin{bmatrix} 0 & 1 & 0 \\ 1 & 1 & 1 \\ 1 & 1 & 0 \end{bmatrix}$.
\end{center}
\bigskip
The invariant density for the slow version of the $2$-dimensional Triangle algorithm is:

\medskip
\begin{center}
    $h^2_{\text{Tr}}: \mathbb{R}_+^3 \to \mathbb{R}_+$,
\end{center}
\begin{align*}
    h^2_{\text{Tr}}(\mathbf{x}) 
    &= (\varphi^2_{\text{Tr}})_*h_{\widetilde{T}}(\mathbf{x}) \\
    &= \sum_{\sigma,i}h_{\widetilde{T}}(v_{\sigma,\sigma_i},P_{v_{\sigma,\sigma_i}}^{-1}\mathbf{x})
       \cdot\delta_{\Lambda_{\sigma\cdot(\sigma_{i-2}\sigma_{i-1}\sigma_{i})}^3} \\
    &= \sum_{\sigma,i}\frac{1}{6}\prod_{j=0}^2
       \frac{1}{\Bigl(\mathbf{x}^T\bigl(P_{v_{\sigma,\sigma_i}}^{-1}\bigr)^T\Omega_{v_{\sigma,\sigma_i}}\Bigr)_j}
       \cdot\delta_{\Lambda_{\sigma\cdot(\sigma_{i-2}\sigma_{i-1}\sigma_{i})}^3} \\
    &= \frac{1}{6x_0x_1x_2}.
\end{align*}
\bigskip
For the $2$-dimensional accelerated version, the invariant density is:
\begin{center}
    $h^2_{\text{Tr}_{\text{acc}}}: \mathbb{R}_+^3 \to \mathbb{R}_+$,
\end{center}
\begin{align*}
    h^2_{\text{Tr}_{\text{acc}}}(\mathbf{x}) 
    &= (\varphi^2_{\text{Tr}_{\text{acc}}})_*h_{\widetilde{T}}(\mathbf{x}) \\
    &= \sum_{\sigma,i}h_{\widetilde{T}}\bigl(v_{\sigma,\sigma_i},
       M_{\sigma_{i+1}\sigma_i}P_{v_{\sigma,\sigma_{i+1}}}^{-1}\mathbf{x}\bigr)
       \cdot\delta_{\Lambda_{(\sigma_{i+2}\sigma_{i}\sigma_{i+1})}^3} \\
    &= \sum_{\sigma,i}\frac{1}{6}\prod_{j=0}^2
       \frac{1}{\Bigl(\mathbf{x}^T\bigl(M_{\sigma_{i+1}\sigma_i}
       P_{v_{\sigma,\sigma_{i+1}}}^{-1}\bigr)^T\Omega_{v_{\sigma,\sigma_i}}\Bigr)_j}
       \cdot\delta_{\Lambda_{(\sigma_{i+2}\sigma_{i}\sigma_{i+1})}^3} \\
    &= \sum_{\sigma,i}\frac{1}{6(x_{\sigma_{i+1}}+x_{\sigma_{i+2}})x_{\sigma_{i+2}}x_{\sigma_{i}}}
       \cdot\delta_{\Lambda_{(\sigma_{i+2}\sigma_{i}\sigma_{i+1})}^3} \\
    &= \frac{1}{6}\sum_{\sigma}
       \frac{1}{(x_{\sigma_0}+x_{\sigma_2})x_{\sigma_0}x_{\sigma_1}}
       \cdot\delta_{\Lambda^3_{\sigma}}.
\end{align*}

\subsubsection{Higher dimensions}

\begin{proposition}\label{prop:desnity_triangle}
    There is an absolutely continuous (with respect to the Lebesgue measure) invariant measure with density $h_{\text{Tr}}: \mathbb{R}_+^{d+1} \to \mathbb{R}_+$ for the slow Triangle algorithm in dimension $d$, where
\begin{center}
    $\displaystyle h_{\text{Tr}}(\textbf{x})=\frac{1}{x_0\ldots x_d}$,
\end{center}
and an absolutely continuous (with respect to the Lebesgue measure) invariant measure with the density $h_{\text{Tr}_{\text{acc}}}: \mathbb{R}_+^{d+1} \to \mathbb{R}_+$ for the accelerated Triangle algorithm in dimension $d$:
\begin{center}
    $\displaystyle h_{\text{Tr}_{\text{acc}}}(\textbf{x})=\frac{1}{(x_{\sigma_0}+x_{\sigma_d})x_{\sigma_0}\ldots x_{\sigma_{d-1}}}\quad\text{for }\textbf{x}\in\Lambda^{d+1}_{\sigma}$.
\end{center}
\end{proposition}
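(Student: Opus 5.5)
Following the Selmer case, I will produce an explicit solution $(\Omega_v)_{v\in V}$ of the system~(\ref{eq:1}) on the simplicial system of Section~\ref{ssec:ss_triangle}, with a suitable labeling of the dual graph $S$. Then I apply formula~(\ref{eq:2}) and push the density forward through $\varphi^d_{\text{Tr}}$ and $\pi^d_{\text{Tr}_{\text{acc}}}$.

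\emph{Choice of $\Omega_v$.} The $d=2$ data suggest the ansatz $\Omega_{v_{\sigma,\sigma_i}}=P_{v_{\sigma,\sigma_i}}^T$ up to a column permutation. In dimension $2$ one checks $\Omega_{v_{(012),0}}=P_{v_{(012),0}}^T$ after permuting columns, as in the Selmer proof, where $\Omega_{v_{\text{id}}}=P_{v_{\text{id}}}^T$. I take $\Omega_v=P_v^T\Pi_v$, with $\Pi_v$ a permutation matrix to be fixed.

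\emph{Verifying~(\ref{eq:1}).} The conjugacy $\varphi^d_{\text{Tr}}$ gives $P_{v'}M_e^{-1}=A_eP_v$, where $A_e^{-1}$ is the elementary matrix of the slow step of $F$ on the cone $P_v\mathbb{R}^{d+1}_+$. With the ansatz, (\ref{eq:1}) becomes
\[
M_e^TP_v^T\Pi_v=P_{v'}^T\Pi_{v'}M_{e'},
\]
that is,
\[
P_{v'}^{-T}M_e^TP_v^T=\Pi_{v'}M_{e'}\Pi_v^{-1}.
\]
The left-hand side equals $(P_vM_eP_{v'}^{-1})^T=(A_e^{-1})^T$, the transpose of a unipotent elementary matrix $\Id+E_{\sigma_0,\sigma_1}$ or $\Id+E_{\sigma_0,\sigma_d}$ in the sorted coordinates. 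It remains to choose the dual labels $l(e')$ and the permutations $\Pi_v$ so that $\Pi_{v'}M_{e'}\Pi_v^{-1}$ equals this transposed elementary matrix. Since every vertex of $G$ has in-degree $2$, namely a loop and a cycle edge, each vertex of $S$ has out-degree $2$, so $M_{e'}$ is $\Id+E_{j,l(e')}$, which is precisely a single elementary matrix. This matches. I will take $\Pi$ constant along each cycle $C_\sigma$ up to the cyclic shift induced by $\sigma$. I will then read off the labels from the $d=2$ labeling $(0,2,2,1,1,0,\ldots)$ and generalize it: the loop at $v_{\sigma,\sigma_i}$ in $S$ gets label $\sigma_{i+1}$, and the dual cycle edge gets label $\sigma_i$. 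Checking the two edge types, the loop and the cycle step, is a finite index computation with the explicit $0/1$ form of $P_v$. This is the main obstacle: getting the indices mod $d+1$ and the permutation $\Pi_v$ consistent around the whole cycle. I would first verify it for $C_{\text{id}}$ and then conjugate by $\sigma$.

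\emph{Densities.} By~(\ref{eq:2}), the vertex density is $\prod_i\bigl(\mathbf{x}^T\Omega_v\bigr)_i^{-1}$ up to a constant. Pushing forward by $\mathbf{y}=P_v\mathbf{x}$ (Jacobian $\det P_v=1$) gives
\[
\prod_i\bigl(\mathbf{y}^TP_v^{-T}P_v^T\Pi_v\bigr)_i^{-1}=\prod_i y_i^{-1}
\]
on the cone $\Lambda_{\sigma'}^{d+1}$. Since these cones for all $v$ partition $\mathbb{R}^{d+1}_+$, the total density is $h_{\text{Tr}}=1/(x_0\cdots x_d)$. For the accelerated map, the first return map $\widetilde{T}_D$ preserves the restriction of $\mu_{\widetilde{T}}$ to $D$. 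Its pushforward under $\pi^d_{\text{Tr}_{\text{acc}}}$, which uses the matrix $P_{v_{\sigma,\sigma_{i+1}}}M_{\sigma_{i+1}\sigma_i}^{-1}$, gives on each cone the density
\[
\prod_j\Bigl(\mathbf{x}^T\bigl(M_{\sigma_{i+1}\sigma_i}P^{-1}_{v_{\sigma,\sigma_{i+1}}}\bigr)^T\Omega_{v_{\sigma,\sigma_i}}\Bigr)_j^{-1}.
\]
Since $\Omega_{v_{\sigma,\sigma_i}}=M_{\ldots}^{-T}$ applied to the neighbour's transposed $P$, exactly one factor $x_{\sigma_d}$ is replaced by $x_{\sigma_0}+x_{\sigma_d}$. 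This yields the stated formula, exactly as in the $d=2$ computation.
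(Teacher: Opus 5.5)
Your proposal is correct and follows the paper's proof. The paper exhibits $\Omega_{v_{\text{id},0}}$ for the natural dual labeling, observes that $\big(P_{v_{\text{id},0}}^{-1}\big)^T\Omega_{v_{\text{id},0}}$ is a cyclic permutation matrix (exactly your ansatz $\Omega_v=P_v^T\Pi_v$), and gets both densities from~(\ref{eq:2}) by pushing forward through $P_v$ (resp.\ $P_{v'}M_e^{-1}$ for the accelerated map). Your reduction of~(\ref{eq:1}) to $\Pi_{v'}M_{e'}\Pi_v^{-1}=(P_vM_eP_{v'}^{-1})^T$ supplies the check the paper only asserts: it forces $\Pi_{v'}=\Pi_v$ along every edge, and with your labeling (loop $\sigma_{i+1}$, dual cycle edge $\sigma_i$ --- a global shift of the paper's labeling, not a literal reading of $(0,2,2,1,1,0)$) one may take $\Pi_v=\Id$, so that $\Omega_v=P_v^T$ exactly as in the Selmer case.
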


\begin{proof}
    Consider the cycle $C_{\text{id}}$ in $G$ and the corresponding cycle in $S$. For the dual simplicial system $S$, we choose the natural labeling, see Fig.~\ref{fig:ssduald_triangle_labelling}.
    \begin{figure}
        \centering
        \begin{tikzpicture}[node distance={15mm}, > = Stealth, very thick] 
            \node (1) at (0,2) {$v_{\text{id},0}$};
            \node (2) at (-1.73,1) {$v_{\text{id},1}$};
            \node (3) at (-1.73,-1) {$v_{\text{id},2}$};
            \node (4) at (0,-2) {$v_{\text{id},3}$};
            \node (5) at (1.73,-1) {$ $};
            \node (6) at (1.73,1) {$v_{\text{id},d}$};
            \draw[->] (2) -- node[midway, above left] {$0$} (1);
            \draw[->] (3) -- node[midway, left] {$1$} (2);
            \draw[->] (4) -- node[midway, below left] {$2$} (3);
            \draw[->][thick, dashed] (5) -- node[midway, below] {$3$} (4);
            \draw[->][thick, dashed] (6) -- node[midway, right] {$d-1$} (5);
            \draw[->] (1) -- node[midway, above] {$d$} (6);
            \draw[->] (1) to [out=120, in=60, looseness=5] node[midway, above] {$0$} (1);
            \draw[->] (2) to [out=210, in=150, looseness=5] node[midway, left] {$1$} (2);
            \draw[->] (3) to [out=210, in=150, looseness=5] node[midway, left] {$2$} (3);
            \draw[->] (4) to [out=300, in=240, looseness=5] node[midway, below] {$3$} (4);
            \draw[->] (6) to [out=30, in=330, looseness=5] node[midway, right] {$d$} (6);
        \end{tikzpicture}
        \caption{The natural dual labeling for $C_{\text{id}}$.}
        \label{fig:ssduald_triangle_labelling}
    \end{figure}
    The solution to the system of equations~(\ref{eq:1}) for $C_{\text{id}}$, as for a strongly connected component of $G$, is uniquely determined by the matrix $\Omega_{\text{id},0}$. The matrix
    \begin{center}
        $\displaystyle \Omega_{v_{\text{id},0}} = \begin{bmatrix}
        1&1&1&\ldots&1 \\ 1&0&0&0&0 \\ \vdots&1&0&\vdots&\vdots\\1&\ldots&1&\ddots&\vdots \\1&\ldots&\ldots&1&0
        \end{bmatrix}$
    \end{center}
    is the solution to the system of equations~(\ref{eq:1}) for $C_{\text{id}}$, and 
    $$\displaystyle h_{\widetilde{T}}(v_{\text{id},0},\textbf{x}) = \frac{1}{(d+1)!}\prod_{i=0}^d\frac{1}{\big(\textbf{x}^T\Omega_{v_{\text{id},0}}\big)_i}.$$
    The matrix of the projection $\pi^d_{\text{Tr}}$ at this vertex is:
    \medskip
    \begin{center}
        $\displaystyle P_{v_{\text{id},0}} = \begin{bmatrix}
        1&0&0&\ldots&0 \\ 1&1&1&\ldots&1 \\ 1&0&1&\ldots&1\\\vdots&0&0&\ddots&\vdots \\1&0&\ldots&0&1
        \end{bmatrix}$,\quad\quad and since $\displaystyle \big(P_{v_{\text{id},0}}^{-1})^T\cdot\Omega_{v_{\text{id},0}} = \begin{bmatrix}
        0&0&\ldots&0&1 \\ 1&0&0&\ldots&0 \\ 0&1&0&\ldots&0\\\vdots&0&\ddots&\ddots&\vdots \\0&0&\ldots&1&0
        \end{bmatrix}$,
    \end{center}
    \medskip
    the invariant density (for the slow version) for the vectors $\textbf{x}\in\Lambda_{\sigma\cdot(12\ldots d0)}^{d+1}$ is
    $$\displaystyle h_{\text{Tr}}(\textbf{x}) = \frac{1}{(d+1)!}\prod_{i=0}^d\frac{1}{\Big(\textbf{x}^T\big(P_{v_{\text{id},0}}^{-1}\big)^T\Omega_{v_{\text{id},0}}\Big)_i} = \frac{1}{(d+1)!x_0x_1\ldots x_d}.$$
    \bigskip
    By changing the projection from $h_{\text{Tr}}$ to $h_{\text{Tr}_{\text{acc}}}$, we also easily get the invariant density for the accelerated version:
    $$\displaystyle  h_{\text{Tr}_{\text{acc}}}(\textbf{x}) = \frac{1}{(d+1)!}\prod_{i=0}^d\frac{1}{\Big(\textbf{x}^T\big(M_{1,0}P_{v_{\text{id},0}}^{-1}\big)^T\Omega_{v_{\text{id},0}}\Big)_i} = \frac{1}{(d+1)!(x_1+x_2)x_2\ldots x_dx_0}.$$
    \medskip
    For the other parts of the cone $\mathbb{R}^{d+1}_+$, the calculations are the same. Hence, we have found the required densities. 
\end{proof}

\begin{remark}
    The measures found for the slow and accelerated Triangle algorithms in this section coincide with the results of \cite[Thm. 7.1, Prop. 8.3]{garrityerg}.
\end{remark}

\section{Extended simplicial systems}\label{sec:extss}

A significant limitation of our method at this stage is that it allows us to obtain invariant densities only in the form of fractions with a numerator of $1$, according to Formula~(\ref{eq:2}). In other words, for a particular $d$-dimensional algorithm, it allows us to check as invariant domains only convex polyhedra with $d+1$ vertices. 

\subsection{Defining extended systems}
The idea of expanding the range of natural extension domains motivates the following definition.
\begin{definition}\label{def:extss}
    For a given simplicial system $G = (V,E)$, the \textit{$n$-extension} is a directed labeled graph $G^{(n)} = (V^{(n)}, E^{(n)})$ where:
    \begin{itemize}
        \item the vertex set $V^{(n)} = V \times \{1, \ldots, n\}$ consists of $n$ copies of each original vertex;
        \item edges in $E^{(n)}$ connect vertices within and between copies $V\times\{j\}$ ($j = 1,\ldots, n$) according to the initial rule in the graph $G$.
    \end{itemize}
    The corresponding projective and linear win-lose inductions $T^{(n)}$ and $\widehat{T}^{(n)}$ on $G^{(n)}$ are semiconjugated to $T$ and $\widehat{T}$, respectively, with the natural projection:
    $$\displaystyle \pi_{\text{ext}}: V^{(n)}\times\mathbb{R}_+^{d+1} \to V\times\mathbb{R}_+^{d+1},$$
    $$\displaystyle \pi_{\text{ext}}(v,j,\textbf{x}) = (v,\textbf{x})\quad\text{for }j = 1,\ldots, n.$$
\end{definition}
Let us apply the measure construction method described in Section~\ref{sec:measure} to the graph $G^{(n)}$. In the dual graph $S^{(n)}$, edges may connect vertices both within and between copies $V\times\{j\}$ as well. Now, assuming 
$$\big(\Omega_{v^{(j)}}\big)_{(v,j)\in V\times\{1,\ldots, n\}}$$ 
to be a solution to the system of equations~(\ref{eq:1}) for the pair of graphs $(G^{(n)}, S^{(n)})$ and that  $\mu_{\text{ext}}$ is an invariant measure for $\widehat{T}^{(n)}$ with the density
$$h_{\text{ext}} (v,j,\textbf{x}) = \frac{1}{(d+1)!}\prod_{i=0}^d \frac{1}{\big(\textbf{x}^T\Omega_{v^{(j)}}\big)_i},$$
the invariant measure $\mu_{\widehat{T}}$ for the initial win-lose induction $\widehat{T}$ is found as 
$$\mu_{\widehat{T}} = (\pi_{\text{ext}})_*\mu_{\text{ext}},$$ 
and has the density
$$h_{\widehat{T}} (v,\textbf{x}) = \sum_{j=1}^n\frac{1}{(d+1)!}\prod_{i=0}^d \frac{1}{\big(\textbf{x}^T\Omega_{v^{(j)}}\big)_i}.$$
Thus, the construction of the $n$-extension of the simplicial system allows us to search for arbitrary convex polyhedra as invariant domains by splitting them into polyhedra with $d+1$ vertices passing into each other, which correspond to the terms of the sum in $h_{\widehat{T}}$. An example of the use of this construction is the Brun algorithm for $d\geq 3$ (see \cite[$\mathsection$~8.6]{arnouxlabbe} and Section~\ref{ssec:brun}).

\subsection{Dual graphs for first return maps}
Another generalization of our method concerns the construction of a dual system. This generalization is motivated by the fact that first return maps of win-lose inductions arise naturally in the study of MCF algorithms: in particular, the Brun algorithm and the Modified Triangle algorithm are most naturally described as first return maps of simpler win-lose inductions onto a subset of vertices, as described in Sections~\ref{ssec:brun} and~\ref{ssec:modtr}. Indeed, the construction of a simplicial system reflects the ordering process of the coordinates at each step of an MCF algorithm, which often requires several intermediate steps before returning to a fully ordered configuration. As a consequence, the original MCF algorithm is typically conjugated not to the win-lose induction itself, but to its first return map onto the subset of vertices corresponding to fully ordered states. Thus, in order to apply the graph method to these algorithms directly, one needs a notion of a dual graph adapted to the first return map setting. We note that this generalization also opens the way to finding invariant domains of positive codimension for natural extensions of skew product MCF algorithms, a direction that will be pursued in a forthcoming paper.

Suppose we need to find an invariant measure for the first return map of the linear win-lose induction $\widehat{T}$ onto some subset 
$$\displaystyle W\times\mathbb{R}_+^{d+1} \subsetneq V\times\mathbb{R}_+^{d+1}.$$
Then, in the construction of the dual graph in Section~\ref{sec:dualss}, the condition that all edges in $E'$ are the corresponding inverted ones from $E$ is too restrictive: it suffices that the paths between vertices in $W \subsetneq V$ in the dual graph are in one-to-one correspondence with the same paths in the original graph. This means that in this case we have more possibilities for defining the dual graph, since the number of intermediate vertices may take various values. This motivates the following definition.
\begin{definition}\label{def:ftmdual}
    For a given simplicial system $G = (V,E)$ and $W \subsetneq V$, consider the first return map
    $$\widetilde{T}_W: W\times\mathbb{R}_+^{d+1} \to W\times\mathbb{R}_+^{d+1}$$
    of the corresponding linear win-lose induction $\widehat{T}$ to $W\times\mathbb{R}_+^{d+1}$. Then the \textit{$W$-dual simplicial system} $S_W$ for $G$ is a directed labeled graph $S_W = (V', E')$ such that:
    \begin{itemize}
        \item $W \subseteq V'$;
        \item for any two vertices $v,w\in W$, there is a bijection
        $$\displaystyle \psi_{vw}: \Gamma_G(v,w) \to \Gamma_{S_W}(v,w),$$
        where
        $$\displaystyle\Gamma_G(v,w) = \bigcup_{n\geq1}\big\{\gamma = \gamma_1\ldots\gamma_n \in V^n\text{-- path in }G|\; \gamma_1=v, \gamma_n=w, \gamma_j \notin W, \;\;j=2,\ldots,n-1\big\},$$
        $$\displaystyle\Gamma_{S_W}(v,w) = \bigcup_{n\geq1}\big\{\gamma' = \gamma'_1\ldots\gamma'_n \in (V')^n\text{-- path in }S_W|\; \gamma'_1=w, \gamma_n=v, \gamma'_j \notin W, \;\;j=2,\ldots,n-1\big\}.$$
    \end{itemize}
    Then applying our method to the pair $(G,S_W)$ means, instead of solving~(\ref{eq:1}), solving the system of equations:
    \begin{equation}\tag{5}\label{eq:4}
        \displaystyle M^T_{\gamma}\Omega_v = \Omega_{w}M_{\psi_{vw}(\gamma)}\quad \text{for any pair }v,w \in W\text{ and any path }\gamma \in \Gamma_{G}(v,w).
    \end{equation}
\end{definition}

\begin{conjecture}
    Assume that $\Leb(D_{\text{AL}}) > 0$, where $D_{\text{AL}}$ is the invariant domain described in~(\ref{eq:0}). Then there exists a solution $\big(\Omega_{v}\big)_{v\in V}$ to the system of equations~(\ref{eq:4}) such that every matrix $\Omega_v$ has full rank.
\end{conjecture}
This conjecture is supported by the following intuition: when the invariant domain $D_{\text{AL}}$ is a polytope, it can be decomposed into simplices, and each simplex could be associated with a copy of the corresponding vertex and matrix $\Omega_v$. In the general case, one may need to consider infinite covers of simplicial systems, which provide a more flexible framework for constructing the required solution.

\subsection{Brun algorithm}\label{ssec:brun}

Using the Brun algorithm, we illustrate the versatility of extended 
simplicial systems as a tool for computing invariant measures.
In the following two subsections, we compute the invariant densities in dimensions $d=2$ 
and $d=3$, recovering the result of~\cite{arnouxlabbe}.
\subsubsection{Definition}
The Brun algorithm is defined by subtracting the second largest coordinate of a vector from the largest one, see \cite{brun1}, \cite{brun2}. The matrix map is 
$$M=\Id + E_{\sigma_0,\sigma_1}\quad\quad\text{for }x\in\Lambda^{d+1}_{\sigma}.$$
Its linear version is: 
\begin{center}
    $\displaystyle F: \mathbb{R}^{d+1}_+\to\mathbb{R}^{d+1}_+$, \\
    $ $ \\
    $F(\textbf{x}) = (x_0,\ldots,x_{\sigma_0}-x_{\sigma_1},\ldots,x_d)$ for $\textbf{x}\in\Lambda^{d+1}_{\sigma}$;
\end{center}
and its projective version is:
\begin{center}
    $\displaystyle f: \Delta^d\to\Delta^d$, \\
    $ $ \\
    $f(\textbf{x}) = \big(1-x_{\sigma_1}\big)^{-1}(x_0,\ldots,x_{\sigma_0}-x_{\sigma_1},\ldots,x_d)$ for $\textbf{x}\in\Delta^d\cap\Lambda_{\sigma}^{d+1}$.
\end{center}
\subsubsection{Constructing a simplicial system}
Following \cite[$\mathsection$~5.2.1]{fougeronpp}, we construct a simplicial system for the \textit{Brun algorithm}. Consider the directed labeled graph $G$ with $d\cdot(d+1)!$ vertices:
$$V := \big\{v_{\sigma},\text{int}_{\sigma}^{1},\text{int}_{\sigma}^{2},\ldots,\text{int}_{\sigma}^{d-1}|\; \sigma\in S_{d+1}\big\}.$$
Each vertex $v_{\sigma}$ has two outgoing edges: one with label $\sigma_0$ to vertex $v_{\sigma\cdot(0\ldots d)}$ and one with label $\sigma_d$ to the \textit{intermediate} vertex $\text{int}_{\sigma}^{d-1}$. Each \textit{intermediate} vertex $\text{int}_{\sigma}^{i}$ has two outgoing edges: one with label $\sigma_0$ to vertex $v_{\sigma\cdot(0\ldots i)}$ and one with label $\sigma_i$ to the intermediate vertex $\text{int}_{\sigma}^{i-1}$ if $i > 1$, or to the vertex $v_{\sigma}$ if $i=1$  (see Fig.~\ref{fig:ss_brun}). The structure of $G$ reflects the successive ordering of coordinates performed by the Brun algorithm: starting from a permutation $\sigma$, the algorithm progressively reorders the coordinates of the input vector, and each intermediate vertex $\text{int}_{\sigma}^{i}$ represents an intermediate stage in this reordering, where the first $d-i$ coordinates have already been placed in the correct order. We also define the subset
$$\displaystyle V_{\text{perm}} := \big\{v_{\sigma}|\; \sigma \in S_{d+1}\big\}$$
of the set of all vertices $V$, then the construction above makes explicit the conjugacy between the first return map on 
$$V_{\text{perm}}\times\mathbb{R}_{+}^{d+1}$$ 
of the win-lose induction $\widehat{T}$ on $G$ and the Brun algorithm.
\begin{figure}[ht]
    \begin{minipage}[ht]{0.99\linewidth}
        \center{\begin{tikzpicture}[
        > = Stealth,
        node distance = 4cm,
        state/.style = {circle, draw, very thick, minimum size=1.2cm, font=\small},
        inter/.style = {circle, fill=black, inner sep=2pt}
        ]
            \node[state, dashed] (sigma) at (0,0) {$v_{\sigma}$};
            \node[state, dashed] (sigma0) at (12,0) {$v_{\sigma}$};
            \node (sigman) at (0,-3) {$v_{\sigma\cdot(01\ldots d)}$};
            \node (sigman1) at (3,-3) {$v_{\sigma\cdot(01\ldots d-1)}$};
            \node (sigma2) at (9,-3) {$v_{\sigma\cdot(01)}$};
            \node (inter) at (6, 0) {$\ldots$};
            \node[inter] (1) at (3, 0) {};
            \node[inter] (2) at (9, 0) {};
            \draw[->, very thick] (sigma) to[] node[above, pos=0.5]{$\sigma_d$} (1);
            \draw[->, very thick] (1) to[] node[above, pos=0.5]{$\sigma_{d-1}$} (inter);
            \draw[->, very thick] (inter) to[] node[above, pos=0.5]{$\sigma_2$} (2);
            \draw[->, very thick] (2) to[] node[above, pos=0.5]{$\sigma_1$} (sigma0);
            \draw[->, very thick] (sigma) to[] node[left, pos=0.5]{$\sigma_0$} (sigman);
            \draw[->, very thick] (1) to[] node[left, pos=0.5]{$\sigma_0$} (sigman1);
            \draw[->, very thick] (2) to[] node[left, pos=0.5]{$\sigma_0$} (sigma2);
        \end{tikzpicture} }
    \end{minipage}
    \caption{Simplicial system for the Brun algorithm in dimension $d$.}
    \label{fig:ss_brun}
\end{figure} \\
Consider the dimension $d=2$. The first return map $\widetilde{T}_{V_{\text{perm}}}$ on $G$ is conjugated to the map $F$ by the bijection $\varphi^2_{\text{B}}$ defined in a similar way to $\varphi^2_{\text{Tr}}$ (see Section~\ref{ssec:ss_triangle}):
\begin{minipage}[ht]{0.66\linewidth}
    \centering
    $\varphi^2_{\text{B}}: V_{\text{perm}}\times\mathbb{R}_+^3\to \mathbb{R}_+^3,$ \\
    \medskip
    $\varphi^2_{\text{B}}(v_{\sigma},\textbf{x})=P_{v_{\sigma}}\cdot\textbf{x}$, where \\
    $ $ \\
    $P_{v_{(012)}} = \begin{bmatrix} 1 & 0 & 0 \\ 1 & 1 & 0 \\ 1 & 1 & 1 \end{bmatrix}, P_{v_{(021)}} = \begin{bmatrix}     1 & 0 & 0 \\ 1 & 1 & 1 \\ 1 & 0 & 1 \end{bmatrix},$\\
    $ $ \\
    $ $ \\
    $P_{v_{(102)}} = \begin{bmatrix} 1 & 1 & 0 \\ 0 & 1 & 0 \\ 1 & 1 & 1 \end{bmatrix}, P_{v_{(120)}} = \begin{bmatrix} 1 & 1 & 1 \\ 0 & 1 & 0 \\ 0 & 1 & 1 \end{bmatrix}$,
    \\
    $ $ \\
    $ $ \\
    $P_{v_{(201)}} = \begin{bmatrix} 1 & 0 & 1 \\ 1 & 1 & 1 \\ 0 & 0 & 1 \end{bmatrix}, P_{v_{(210)}} = \begin{bmatrix} 1 & 1 & 1 \\ 0 & 1 & 1 \\ 0 & 0 & 1 \end{bmatrix}$,
\end{minipage}
\hfill
\begin{minipage}[ht]{0.32\linewidth}
    \center{\begin{tikzpicture}[node distance={35mm}]
    \node (1) {$V_{\text{perm}}\times\mathbb{R}_+^3$}; 
    \node (2) [right of=1] {$V_{\text{perm}}\times\mathbb{R}_+^3$};
    \node (3) [below of=1] {$\mathbb{R}_+^3$};
    \node (4) [below of=2] {$\mathbb{R}_+^3$};
    \draw[->] (1) -- node[midway, above] {$\widetilde{T}_{V_{\text{perm}}}$} (2);
    \draw[->] (3) -- node[midway, above] {$F$} (4);
    \draw[->] (1) -- node[midway, left] {$\varphi^2_{\text{B}}$} (3);
    \draw[->] (2) -- node[midway, right] {$\varphi^2_{\text{B}}$} (4);
\end{tikzpicture}}
\end{minipage} \\ \\ \\
i.e., the bijection $\varphi^2_{\text{B}}$ sends the pair $(v_{\sigma},\textbf{x})$ to the vector $\displaystyle P_{v_{\sigma}}\cdot\textbf{x}\in \Lambda_{\sigma}^{3}.$ \\ \\
In arbitrary dimension, the projection matrices are:
\begin{center}
    $\displaystyle P_{v_{\sigma}} = C_{\sigma}\cdot\begin{bmatrix} 1 & 0 & \ldots & 0 \\ 1 & 1 & \ldots & 0  \\ \vdots & \vdots &\ddots & \vdots \\ 1 & 1 & \ldots & 1 \end{bmatrix} \cdot C^{-1}_{\sigma},\quad\text{where }C_{\sigma}\text{ is the matrix of }\sigma \in S_{d+1}$,
\end{center}
i.e., the projection $\varphi^d_{B}$ sends the pair $(v_{\sigma},\textbf{x})$ to the vector in $\displaystyle \Lambda_{\sigma}^{d+1}.$

\subsubsection{Invariant measure for $d=2$}

Note that we can identify the intermediate vertices $\text{int}_{\sigma}^{1}$ and $\text{int}_{(\sigma_0\sigma_1)\sigma}^{1}$ for any $\sigma \in S_{d+1}$, since the pairs of outgoing edges from these two vertices coincide (the edges have the same ending vertices and labels, respectively). In the case $d=2$, this gives the property that in the new graph, any vertex has $2$ incoming edges (and $2$ outgoing, by the initial construction), and the graph becomes even more symmetric; see Fig.~\ref{fig:ss2_brun}. 

\begin{figure}[ht]
\center{ 
    \begin{tikzpicture}[scale=0.7,
        > = Stealth,
        node distance = 4cm,
        state/.style = {circle, draw, very thick, minimum size=0.8cm, font=\normalsize},
        inter/.style = {circle, fill=black, inner sep=2pt}
        ]
        \node[state] (123) at (0,0) {$123$};
        \node[state] (312) at (3,-3) {$312$};
        \node[state] (231) at (-3, -3) {$231$};
        \node[state] (213) at (0,5) {$213$};
        \node[state] (132) at (7,-6) {$132$};
        \node[state] (321) at (-7, -6) {$321$};
        \node[inter] (1) at (0, 2.5) {};
        \node[inter] (2) at (5, -4.5) {};
        \node[inter] (3) at (-5, -4.5) {};
        \draw[->, very thick] (123) to[out=200, in=70, looseness=1] node[above left, pos=0.5]{$1$} (231);
        \draw[->, very thick] (231) to[out=340, in=200, looseness=1] node[below, pos=0.5]{$2$} (312);
        \draw[->, very thick] (312) to[out=110, in=340, looseness=1] node[above right, pos=0.5]{$3$} (123);
        \draw[->, very thick] (213) to[out=250, in=120, looseness=1] node[left, pos=0.5]{$3$} (1);
        \draw[->, very thick] (1) to[out=50, in=290, looseness=1] node[right, pos=0.5]{$1$} (213);
        \draw[->, very thick] (123) to[out=70, in=300, looseness=1] node[right, pos=0.5]{$3$} (1);
        \draw[->, very thick] (1) to[out=230, in=110, looseness=1] node[left, pos=0.5]{$1$} (123);
        \draw[->, very thick] (132) to[out=120, in=350, looseness=1] node[above right, pos=0.5]{$2$} (2);
        \draw[->, very thick] (2) to[out=270, in=160, looseness=1] node[below left, pos=0.5]{$3$} (132);
        \draw[->, very thick] (312) to[out=300, in=170, looseness=1] node[below left, pos=0.5]{$2$} (2);
        \draw[->, very thick] (2) to[out=100, in=340, looseness=1] node[above right, pos=0.5]{$1$} (312);
        \draw[->, very thick] (321) to[out=60, in=190, looseness=1] node[above left, pos=0.5]{$1$} (3);
        \draw[->, very thick] (3) to[out=270, in=20, looseness=1] node[below right, pos=0.5]{$2$} (321);
        \draw[->, very thick] (231) to[out=240, in=10, looseness=1] node[below right, pos=0.5]{$1$} (3);
        \draw[->, very thick] (3) to[out=80, in=200, looseness=1] node[above left, pos=0.5]{$3$} (231);
        \draw[->, very thick] (321) to[out=110, in=190, looseness=1] node[above left, pos=0.5]{$3$} (213);
        \draw[->, very thick] (213) to[out=350, in=70, looseness=1] node[above right, pos=0.5]{$2$} (132);
        \draw[->, very thick] (132) to[out=230, in=310, looseness=1] node[below, pos=0.5]{$1$} (321);
    \end{tikzpicture}}
\caption{Simplicial system for the Brun algorithm in dimension $d=2$.}
\label{fig:ss2_brun}
\end{figure} 

The solution to the system of linear equations~(\ref{eq:4}) is given by the labeling of the $V_{\text{perm}}$-dual simplicial system $S_{V_{\text{perm}}}$ in Fig.~\ref{fig:ssdual2_brun}. This solution is unique up to the permutation of columns of the matrices $\Omega_{v}$:
\begin{center}
    $\Omega_{v_{(012)}} = \begin{bmatrix} 1 & 0 & 1 \\ 1 & 1 & 1 \\ 1 & 1 & 2 \end{bmatrix}, \;\;\Omega_{v_{(021)}} = \begin{bmatrix} 1 & 1 & 0 \\ 1 & 2 & 1 \\ 1 & 1 & 1 \end{bmatrix},\;\; \Omega_{v_{(102)}} = \begin{bmatrix} 1 & 1 & 1 \\ 0 & 1 & 1 \\ 1 & 1 & 2 \end{bmatrix}$,\\
    $ $ \\
    $ $ \\
    $\Omega_{v_{(120)}} = \begin{bmatrix} 2 & 1 & 1 \\ 1 & 1 & 0 \\ 1 & 1 & 1 \end{bmatrix},\;\; \Omega_{v_{(201)}} = \begin{bmatrix} 1 & 1 & 1 \\ 1 & 2 & 1 \\ 0 & 1 & 1 \end{bmatrix},\;\; \Omega_{v_{(210)}} = \begin{bmatrix} 2 & 1 & 1 \\ 1 & 1 & 1 \\ 1 & 0 & 1 \end{bmatrix}$.
\end{center}

\begin{figure}[ht]
    \center{\begin{tikzpicture}[scale=0.7,
    > = Stealth,
    node distance = 4cm,
    state/.style = {circle, draw, very thick, minimum size=0.8cm, font=\normalsize},
    inter/.style = {circle, fill=black, inner sep=2pt}
    ]
        \node[state] (123) at (0,0) {$123$};
        \node[state] (312) at (3,-3) {$312$};
        \node[state] (231) at (-3, -3) {$231$};
        \node[state] (213) at (0,5) {$213$};
        \node[state] (132) at (7,-6) {$132$};
        \node[state] (321) at (-7, -6) {$321$};
        \node[inter] (1) at (0, 2.5) {};
        \node[inter] (2) at (5, -4.5) {};
        \node[inter] (3) at (-5, -4.5) {};
        \draw[->, very thick] (231) to[out=70, in=200, looseness=1] node[above left, pos=0.5]{$3$} (123);
        \draw[->, very thick] (312) to[out=200, in=340, looseness=1] node[below, pos=0.5]{$1$} (231);
        \draw[->, very thick] (123) to[out=340, in=110, looseness=1] node[above right, pos=0.5]{$2$} (312);
        \draw[->, very thick] (1) to[out=120, in=250, looseness=1] node[left, pos=0.5]{$2$} (213);
        \draw[->, very thick] (213) to[out=290, in=50, looseness=1] node[right, pos=0.5]{$3$} (1);
        \draw[->, very thick] (1) to[out=300, in=70, looseness=1] node[right, pos=0.5]{$1$} (123);
        \draw[->, very thick] (123) to[out=110, in=230, looseness=1] node[left, pos=0.5]{$3$} (1);
        \draw[->, very thick] (2) to[out=350, in=120, looseness=1] node[above right, pos=0.5]{$1$} (132);
        \draw[->, very thick] (132) to[out=160, in=270, looseness=1] node[below left, pos=0.5]{$2$} (2);
        \draw[->, very thick] (2) to[out=170, in=300, looseness=1] node[below left, pos=0.5]{$3$} (312);
        \draw[->, very thick] (312) to[out=340, in=100, looseness=1] node[above right, pos=0.5]{$2$} (2);
        \draw[->, very thick] (3) to[out=190, in=60, looseness=1] node[above left, pos=0.5]{$3$} (321);
        \draw[->, very thick] (321) to[out=20, in=270, looseness=1] node[below right, pos=0.5]{$1$} (3);
        \draw[->, very thick] (3) to[out=10, in=240, looseness=1] node[below right, pos=0.5]{$2$} (231);
        \draw[->, very thick] (231) to[out=200, in=80, looseness=1] node[above left, pos=0.5]{$1$} (3);
        \draw[->, very thick] (213) to[out=190, in=110, looseness=1] node[above left, pos=0.5]{$1$} (321);
        \draw[->, very thick] (132) to[out=70, in=350, looseness=1] node[above right, pos=0.5]{$3$} (213);
        \draw[->, very thick] (321) to[out=310, in=230, looseness=1] node[below, pos=0.5]{$2$} (132);
    \end{tikzpicture} 
    }
    \caption{$V_{\text{perm}}$-dual simplicial system for the Brun algorithm in dimension $d=2$.}
    \label{fig:ssdual2_brun}
\end{figure}

The invariant density for the $2$-dimensional Brun algorithm is then:

\medskip
\begin{center}
    $h^2_{\text{B}}: \mathbb{R}_+^3 \to \mathbb{R}_+$,
\end{center}
\begin{align*}
h^2_{\text{B}}(\mathbf{x}) 
&= (\varphi^2_{\text{B}})_*h_{\widetilde{T}_{V_{\text{perm}}}}(\mathbf{x}) \\
&= \sum_{\sigma}h_{\widetilde{T}_{V_{\text{perm}}}}\bigl(v_{\sigma},
   P_{v_{\sigma}}^{-1}\mathbf{x}\bigr)
   \cdot\delta_{\Lambda_{\sigma}^3} \\
&= \sum_{\sigma}\frac{1}{6}\prod_{j=0}^2
   \frac{1}{\Bigl(\mathbf{x}^T\bigl(P_{v_{\sigma}}^{-1}\bigr)^T
   \Omega_{v_{\sigma}}\Bigr)_j}
   \cdot\delta_{\Lambda_{\sigma}^3} \\
&= \frac{1}{6x_{\sigma_0}x_{\sigma_1}(x_{\sigma_0}+x_{\sigma_2})}
   \quad\text{for } \mathbf{x} \in \Lambda^3_{\sigma}.
\end{align*}
\subsubsection{Invariant measure for $d=3$}
In dimension $d=3$, we cannot find any solution to the system of equations~(\ref{eq:4}) for the first return map $\widehat{T}_{\text{perm}}$ of the win-lose induction conjugated to the Brun algorithm (the construction of the bijection $\varphi_B^3$ is described in the previous Section). Numerical experiments show that the invariant domain for this induction could be the union of two disjoint tetrahedrons with a common face; it has motivated the construction of the $2$-extension $G^{(2)} = (V^{(n)}, E^{(n)})$ of the simplicial system $G = (V,E)$. If we define the rule of changing the copies of vertices $V\times\{1,2\}$ as shown in Fig.~\ref{fig:ss_brun4}a, we find the solutions to the \textit{extended} system of equations~(\ref{eq:4}):
\begin{center}
    $\displaystyle \Omega_{v^{(1)}_{\sigma}} = C_{\sigma}\cdot\begin{bmatrix} 1 & 0 & 1 & 1 \\ 1 & 1 & 1 & 1  \\ 1 & 1 & 2 & 2 \\ 1 & 1 & 2 & 3 \end{bmatrix} \cdot C^{-1}_{\sigma},\quad \Omega_{v^{(2)}_{\sigma}} = C_{\sigma}\cdot\begin{bmatrix} 1 & 0 & 1 & 1 \\ 1 & 1 & 1 & 1  \\ 1 & 1 & 2 & 1 \\ 1 & 1 & 3 & 2 \end{bmatrix} \cdot C^{-1}_{\sigma}$.
\end{center}
The corresponding dual labeling is given in Fig.~\ref{fig:ss_brun4}b.

The invariant density for the $3$-dimensional Brun algorithm is then:

\medskip
\begin{center}
    $h^3_{\text{B}}: \mathbb{R}_+^4 \to \mathbb{R}_+$,
\end{center}
\begin{align*}
h^3_{\text{B}}(\mathbf{x}) 
&= (\varphi^3_{\text{B}}\circ\pi_{\text{ext}})_*h_{\widetilde{T}^{(2)}_{V_{\text{perm}}}}(\mathbf{x}) \\
&= \sum_{\sigma,\,i=1,2}h_{\widetilde{T}_{V_{\text{perm}}}}\bigl(v^{(i)}_{\sigma},
   P_{v^{(i)}_{\sigma}}^{-1}\mathbf{x}\bigr)
   \cdot\delta_{\Lambda_{\sigma}^4} \\
&= \sum_{\sigma,\,i=1,2}\frac{1}{6}\prod_{j=0}^3
   \frac{1}{\Bigl(\mathbf{x}^T\bigl(P_{v^{(i)}_{\sigma}}^{-1}\bigr)^T
   \Omega_{v^{(i)}_{\sigma}}\Bigr)_j}
   \cdot\delta_{\Lambda_{\sigma}^4} \\
&= \frac{1}{6x_{\sigma_0}x_{\sigma_1}(x_{\sigma_0}+x_{\sigma_2}+x_{\sigma_3})}
   \cdot\Biggl[\frac{1}{x_{\sigma_0}+x_{\sigma_2}}
   +\frac{1}{x_{\sigma_0}+x_{\sigma_3}}\Biggr]
   \quad\text{for } \mathbf{x} \in \Lambda^4_{\sigma}.
\end{align*}
\begin{figure}[ht]
    \begin{minipage}[ht]{0.49\linewidth}
        \center{
        \begin{tikzpicture}[
        > = Stealth,
        node distance = 4cm,
        state/.style = {circle, draw, very thick, minimum size=1cm, font=\small},
        inter/.style = {circle, fill=black, inner sep=2pt}
        ]
            \node[state, dashed] (sigma) at (0,0) {$v^{(1)}_{\sigma}$};
            \node[state, dashed] (sigma0) at (6,0) {$v^{(1)}_{\sigma}$};
            \node (sigman) at (0,-2) {$v^{(2)}_{\sigma\cdot(0123)}$};
            \node (sigman1) at (2,-2) {$v^{(1)}_{\sigma\cdot(012)}$};
            \node (sigma2) at (4,-2) {$v^{(1)}_{\sigma\cdot(01)}$};
            \node[inter] (1) at (2, 0) {};
            \node[inter] (2) at (4, 0) {};
            \draw[->, very thick] (sigma) to[] node[above, pos=0.5]{$\sigma_3$} (1);
            \draw[->, very thick] (1) to[] node[above, pos=0.5]{$\sigma_2$} (2);
            \draw[->, very thick] (2) to[] node[above, pos=0.5]{$\sigma_1$} (sigma0);
            \draw[->, very thick] (sigma) to[] node[left, pos=0.5]{$\sigma_0$} (sigman);
            \draw[->, very thick] (1) to[] node[left, pos=0.5]{$\sigma_0$} (sigman1);
            \draw[->, very thick] (2) to[] node[left, pos=0.5]{$\sigma_0$} (sigma2);
            \node[state, dashed] (sigmac) at (0,-4) {$v^{(2)}_{\sigma}$};
            \node[state, dashed] (sigma0c) at (6,-4) {$v^{(2)}_{\sigma}$};
            \node (sigmanc) at (0,-6) {$v^{(2)}_{\sigma\cdot(0123)}$};
            \node (sigman1c) at (2,-6) {$v^{(1)}_{\sigma\cdot(012)}$};
            \node (sigma2c) at (4,-6) {$v^{(2)}_{\sigma\cdot(01)}$};
            \node[inter] (3) at (2, -4) {};
            \node[inter] (4) at (4, -4) {};
            \draw[->, very thick] (sigmac) to[] node[above, pos=0.5]{$\sigma_3$} (3);
            \draw[->, very thick] (3) to[] node[above, pos=0.5]{$\sigma_2$} (4);
            \draw[->, very thick] (4) to[] node[above, pos=0.5]{$\sigma_1$} (sigma0c);
            \draw[->, very thick] (sigmac) to[] node[left, pos=0.5]{$\sigma_0$} (sigmanc);
            \draw[->, very thick] (3) to[] node[left, pos=0.5]{$\sigma_0$} (sigman1c);
            \draw[->, very thick] (4) to[] node[left, pos=0.5]{$\sigma_0$} (sigma2c);
        \end{tikzpicture} \\ a}
    \end{minipage}
    \hfill
    \begin{minipage}[ht]{0.49\linewidth}
        \center{
        \begin{tikzpicture}[
        > = Stealth,
        node distance = 4cm,
        state/.style = {circle, draw, very thick, minimum size=1cm, font=\small},
        inter/.style = {circle, fill=black, inner sep=2pt}
        ]
            \node[state, dashed] (sigma) at (0,0) {$v^{(1)}_{\sigma}$};
            \node[state, dashed] (sigma0) at (6,0) {$v^{(1)}_{\sigma}$};
            \node (sigman) at (0,-2) {$v^{(2)}_{\sigma\cdot(012)^{-1}}$};
            \node (sigman1) at (2,-2) {$v^{(1)}_{\sigma\cdot(012)^{-1}}$};
            \node (sigma2) at (4,-2) {$v^{(1)}_{\sigma\cdot(01)^{-1}}$};
            \node[inter] (1) at (2, 0) {};
            \node[inter] (2) at (4, 0) {};
            \draw[->, very thick] (sigma) to[] node[above, pos=0.5]{$\sigma_3$} (1);
            \draw[->, very thick] (1) to[] node[above, pos=0.5]{$\sigma_2$} (2);
            \draw[->, very thick] (2) to[] node[above, pos=0.5]{$\sigma_0$} (sigma0);
            \draw[->, very thick] (sigma) to[] node[left, pos=0.5]{$\sigma_1$} (sigman);
            \draw[->, very thick] (1) to[] node[left, pos=0.5]{$\sigma_1$} (sigman1);
            \draw[->, very thick] (2) to[] node[left, pos=0.5]{$\sigma_1$} (sigma2);
            \node[state, dashed] (sigmac) at (0,-4) {$v^{(2)}_{\sigma}$};
            \node[state, dashed] (sigma0c) at (6,-4) {$v^{(2)}_{\sigma}$};
            \node (sigmanc) at (0,-6) {$v^{(2)}_{\sigma\cdot(0123)^{-1}}$};
            \node (sigman1c) at (2,-6) {$v^{(1)}_{\sigma\cdot(0123)^{-1}}$};
            \node (sigma2c) at (4,-6) {$v^{(2)}_{\sigma\cdot(01)^{-1}}$};
            \node[inter] (3) at (2, -4) {};
            \node[inter] (4) at (4, -4) {};
            \draw[->, very thick] (sigmac) to[] node[above, pos=0.5]{$\sigma_2$} (3);
            \draw[->, very thick] (3) to[] node[above, pos=0.5]{$\sigma_3$} (4);
            \draw[->, very thick] (4) to[] node[above, pos=0.5]{$\sigma_0$} (sigma0c);
            \draw[->, very thick] (sigmac) to[] node[left, pos=0.5]{$\sigma_1$} (sigmanc);
            \draw[->, very thick] (3) to[] node[left, pos=0.5]{$\sigma_1$} (sigman1c);
            \draw[->, very thick] (4) to[] node[left, pos=0.5]{$\sigma_1$} (sigma2c);
        \end{tikzpicture} \\ b}
    \end{minipage}
    \caption{(a) $2$-extension of the simplicial system for the Brun algorithm in dimension $d=3$ and (b) its $V_{\text{perm}}$-dual.}
    \label{fig:ss_brun4}
\end{figure}

\subsection{Modified Triangle algorithm}\label{ssec:modtr}
\subsubsection{Definition}\label{sssec:triangledef}
The Modified Triangle algorithm is defined by subtracting the smallest
coefficient from the largest one as many times as possible when all other
coefficients have already been subtracted. Its linear version is: 
$$\displaystyle F: \mathbb{R}^{d+1}_+ \to \mathbb{R}^{d+1}_+,$$
\footnotesize
$$ x_{\sigma_0} \mapsto\begin{cases}
    x_{\sigma_0} - \sum_{i=1}^{j} x_{\sigma_i} & \text{if }\displaystyle x_{\sigma_0} < \sum_{i=1}^{j+1}x_{\sigma_i},\;\;j<d-1, \\
    x_{\sigma_0} - \sum_{i=1}^{d-1} x_{\sigma_i}-kx_{\sigma_d} & \text{if }\displaystyle \sum_{i=1}^{d-1} x_{\sigma_i}+kx_{\sigma_d} < x_{\sigma_0} < \sum_{i=1}^{d-1} x_{\sigma_i}+(k+1)x_{\sigma_d},\;\;k\geq0.
\end{cases}$$
\normalsize
For the vectors from the second line, the matrix map is 
$$\displaystyle M=\Id + \sum_{i=1}^{d-1}E_{\sigma_0,\sigma_i} + kE_{\sigma_0,\sigma_d}\quad\quad\text{for }x\in\Lambda^{d+1}_{\sigma}.$$

This version of the Triangle algorithm was proposed by Garrity \cite{garrity}, before the version we described in Section~\ref{ssec:triangle}. However, we call this version modified because its construction, and (as we will see later) the question of the invariant domain and measure, is more complex than for the other version. Note that estimates of Lyapunov exponents for this version are given in \cite{berthesteinerthuswaldner}.

\begin{figure}[ht]
    \begin{minipage}[ht]{0.99\linewidth}
        \center{\begin{tikzpicture}[
        > = Stealth,
        node distance = 4cm,
        state/.style = {circle, draw, very thick, minimum size=1cm, font=\small},
        inter/.style = {circle, fill=black, inner sep=2pt}
        ]
            \node[state] (sigma) at (0,0) {$v_{\sigma}$};
            \node[state, dashed] (sigman0) at (12,-3) {$v_{\sigma\cdot(01\ldots d)}$};
            \node[state, dashed] (sigman) at (0,-3) {$v_{\sigma\cdot(01\ldots d)}$};
            \node (sigman1) at (3,-3) {$v_{\sigma\cdot(01\ldots d-1)}$};
            \node (sigma2) at (9,-3) {$v_{\sigma\cdot(012)}$};
            \node (inter) at (6, 0) {$\ldots$};
            \node[inter] (1) at (3, 0) {};
            \node[inter] (2) at (9, 0) {};
            \node[inter] (3) at (12, 0) {};
            \draw[->, very thick] (sigma) to[] node[above, pos=0.5]{$\sigma_d$} (1);
            \draw[->, very thick] (1) to[] node[above, pos=0.5]{$\sigma_{d-1}$} (inter);
            \draw[->, very thick] (inter) to[] node[above, pos=0.5]{$\sigma_3$} (2);
            \draw[->, very thick] (2) to[] node[above, pos=0.5]{$\sigma_1$} (3);
            \draw[->, very thick] (1) to[] node[left, pos=0.5]{$\sigma_0$} (sigman1);
            \draw[->, very thick] (2) to[] node[left, pos=0.5]{$\sigma_0$} (sigma2);
            \draw[->, very thick] (sigma) to[] node[left, pos=0.5]{$\sigma_0$} (sigman);
            \draw[->, very thick] (3) to[] node[left, pos=0.5]{$\sigma_0$} (sigman0);
            \draw[->, very thick] (3) edge[loop right, min distance=15mm, looseness=8] node[right]{$\sigma_d$} (3);
        \end{tikzpicture}  }
    \end{minipage}
    \caption{Simplicial system for the Modified Triangle algorithm in dimension $d$.}
    \label{fig:ss_mtr}
\end{figure} 

\subsubsection{Constructing  a  simplicial  system and an invariant domain}\label{sec:fractdomain}

As for the Brun algorithm, the simplicial system $G$ for the Modified Triangle algorithm is defined with $d\cdot(d+1)!$ vertices:
$$V := \big\{v_{\sigma},\text{int}_{\sigma}^{1},\text{int}_{\sigma}^{2},\ldots,\text{int}_{\sigma}^{d-1}|\; \sigma\in S_{d+1}\big\}.$$
Each vertex $v_{\sigma}$ has two outgoing edges, one with label $\sigma_0$ to vertex $v_{\sigma\cdot(0\ldots d)}$, and one with label $\sigma_d$ to the \textit{intermediate} vertex $\text{int}_{\sigma}^{d-1}$. Each \textit{intermediate} vertex $\text{int}_{\sigma}^{i}$ has two outgoing edges: with label $\sigma_0$ to vertex $v_{\sigma\cdot(0\ldots i)}$ and with label $\sigma_i$ to the intermediate vertex $\text{int}_{\sigma}^{i-1}$ if $i > 1$. The intermediate vertex $\text{int}_{\sigma}^{1}$ has an outgoing loop labeled by $\sigma_d$ and an outgoing edge to $v_{\sigma}$ labeled by $\sigma_0$ (see Fig.~\ref{fig:ss_mtr}). Then the first return map $\widehat{T}_{V_{\text{perm}}}$ is conjugated to the Modified Triangle algorithm in the same way as in Brun's case, with the same matrices for the bijection $\varphi^d_{\text{MTr}}$. Note that the corresponding first return map is constructed according to the fact that we have not defined the slow version of the algorithm; so the number of loops in the intermediate vertex $\text{int}^1_{\sigma}$ that precede the return to $V_{\text{perm}}$ corresponds to the number of subtractions of the smallest coordinate $x_{\sigma_d}$.

\begin{figure}[ht]
    \begin{minipage}[ht]{0.49\linewidth}
        \center{
        \begin{tikzpicture}[
        > = Stealth,
        node distance = 4cm,
        state/.style = {circle, draw, very thick, minimum size=1cm, font=\small},
        inter/.style = {circle, fill=black, inner sep=2pt}
        ]
            \node[state] (sigma) at (0,0) {$v_{\sigma}$};
            \node (sigma2) at (0,-3) {$v_{\sigma\cdot(012)}$};
            \node[inter] (1) at (3, 0) {};
            \draw[->, very thick] (sigma) to[] node[above, pos=0.5]{$\sigma_2$} (1);
            \draw[->, very thick] (sigma) to[] node[left, pos=0.5]{$\sigma_0$} (sigma2);
            \draw[->, very thick] (1) to[] node[above left, pos=0.5]{$\sigma_0$} (sigma2);
            \draw[->, very thick] (1) edge[loop right, min distance=15mm, looseness=8] node[right]{$\sigma_2$} (1);
        \end{tikzpicture} \\ $ $ \\ $ $ \\ a}
    \end{minipage}
    \hfill
    \begin{minipage}[ht]{0.49\linewidth}
        \center{
        \begin{tikzpicture}[
        > = Stealth,
        node distance = 4cm,
        state/.style = {circle, draw, very thick, minimum size=1cm, font=\small},
        inter/.style = {circle, fill=black, inner sep=2pt}
        ]
            \node[state] (sigma) at (0,0) {$v_{\sigma}$};
            \node (sigma3) at (0,-4) {$v_{\sigma\cdot(0123)}$};
            \node (sigma2) at (6,0) {$v_{\sigma\cdot(012)}$};
            \node[inter] (1) at (3, 0) {};
            \node[inter] (2) at (3, -3) {};
            \draw[->, very thick] (sigma) to[] node[above, pos=0.5]{$\sigma_3$} (1);
            \draw[->, very thick] (sigma) to[] node[left, pos=0.5]{$\sigma_0$} (sigma3);
            \draw[->, very thick] (1) to[] node[above, pos=0.5]{$\sigma_0$} (sigma2);
            \draw[->, very thick] (1) to[] node[right, pos=0.5]{$\sigma_2$} (2);
            \draw[->, very thick] (2) edge[loop right, min distance=15mm, looseness=8] node[right]{$\sigma_3$} (2);
            \draw[->, very thick] (2) to[] node[above, pos=0.5]{$\sigma_0$} (sigma3);
        \end{tikzpicture}  \\ b}
    \end{minipage}
    \caption{Simplicial system for the Modified Triangle algorithm in dimension (a) $d=2$ and (b) $d=3$.}
    \label{fig:ss_mtr23}
\end{figure}

According to numerical experiments, it appears that in dimension $d=3$ 
there is no finite $n$-extension for the simplicial system $G$ for which 
a solution to the system~(\ref{eq:4}) exists. This 
suggests that the invariant domain for the natural extension of the Triangle algorithm in this 
dimension is essentially more complex than in the case $d=2$: namely, we 
conjecture that it is nonconvex and fractal. More precisely, numerical 
experiments indicate that the invariant domain for the associated win-lose 
induction is not a finite union of rational polyhedral cones, but rather 
decomposes into an infinite hierarchy of subcones. The following conjecture 
provides an explicit description of the first level of this decomposition.
\begin{conjecture}
    The invariant domain for the Triangle algorithm in dimension $d=3$ is nonconvex and fractal. For the associated win-lose induction, the invariant domain is contained in
    $$\bigsqcup_{\sigma\in S_4,i}\{v_{\sigma,\sigma_i}\}\times\Big(\Omega^1_{\sigma,\sigma_i}\mathbb{R}^4_+\sqcup \Omega^2_{\sigma,\sigma_i}\mathbb{R}^4_+\sqcup \Omega^3_{\sigma,\sigma_i}\mathbb{R}^4_+\Big),$$
    where 
    \begin{center}
        $\displaystyle \Omega^1_{\sigma,\sigma_i} = C_{\sigma'}\cdot\begin{bmatrix} 1 & 0 & 1 & 1 \\ 1 & 1 & 1 & 1  \\ 1 & 1 & 2 & 1 \\ 1 & 1 & 3 & 2 \end{bmatrix} \cdot C^{-1}_{\sigma'}, \quad\Omega^2_{\sigma,\sigma_i} = C_{\sigma'}\cdot\begin{bmatrix} 1 & 0 & 1 & 1 \\ 1 & 1 & 1 & 1  \\ 1 & 1 & 2 & 2 \\ 1 & 1 & 2 & 3 \end{bmatrix} \cdot C^{-1}_{\sigma'},$ \\
        $ $ \\
        $\displaystyle\Omega^3_{\sigma,\sigma_i} = C_{\sigma'}\cdot\begin{bmatrix} 1 & 0 & 0 & 1 \\ 1 & 1 & 0 & 2  \\ 2 & 1 & 1 & 3 \\ 2 & 1 & 1 & 4 \end{bmatrix} \cdot C^{-1}_{\sigma'},$ \\
        $ $ \\
    \end{center}
    $\sigma' = \sigma\cdot(\sigma_{i-d}\ldots\sigma_{i-1}\sigma_{i})$ and $C_{\sigma'}$ is the matrix for the permutation $\sigma'$.
\end{conjecture}
The search for the invariant domain and measure for $d \geq 3$ will be discussed in an upcoming paper.

\begin{figure}[ht]
\center{\includegraphics[width=0.8\linewidth]{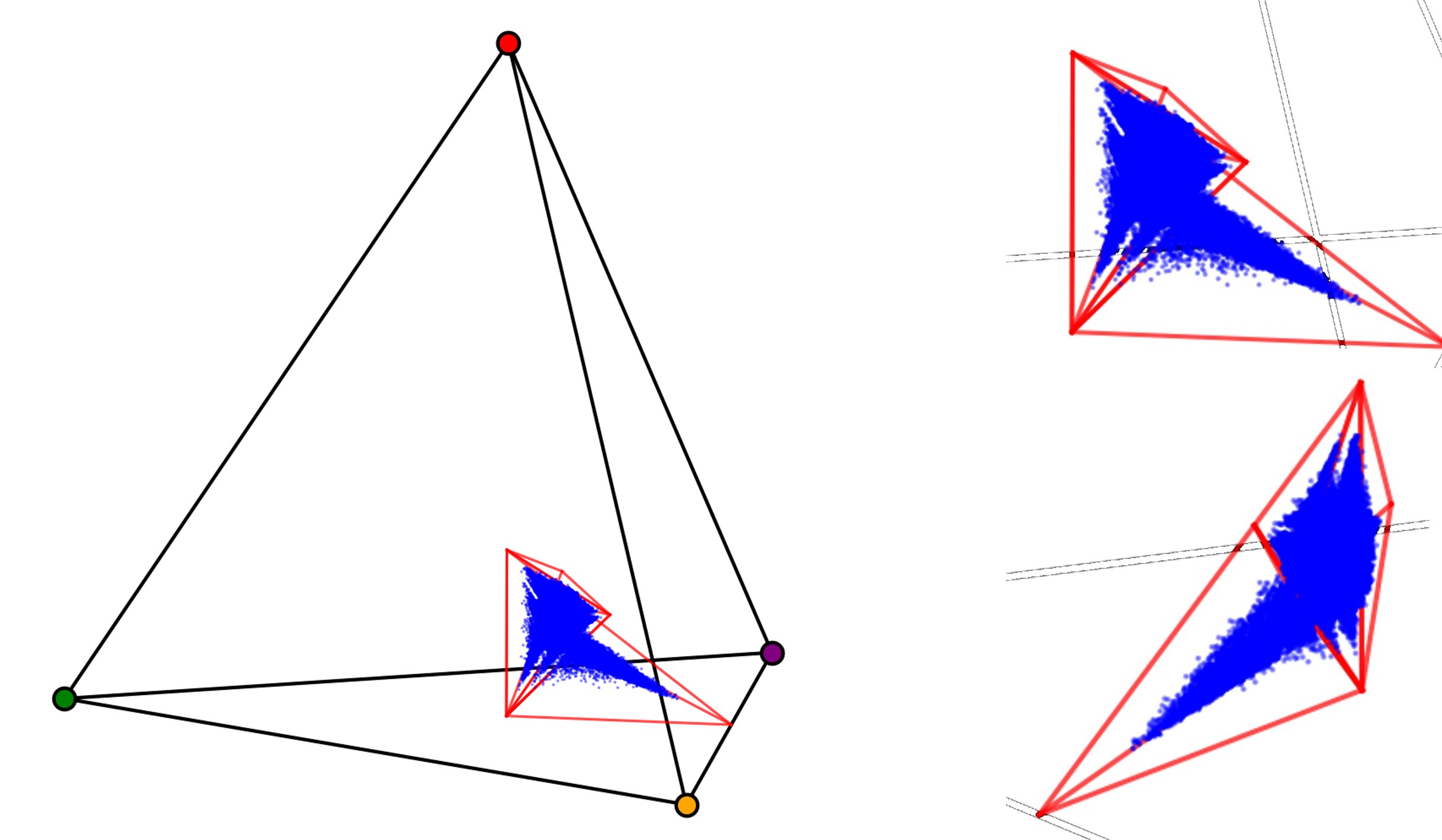}}
\caption{Sage approximations for the $v_{\text{id},0}$-part of the invariant domain of the win-lose induction conjugated to the Modified Triangle algorithm in dimension $d=3$.}
\label{fig:mtr4_approx}
\end{figure}

\section{Ergodicity}\label{sec:ergodicity}

The invariant measures for the algorithms under consideration have been 
constructed explicitly in the preceding sections. It remains to prove that 
these measures are the unique ergodic invariant measures equivalent to the 
Lebesgue measure. We do this by verifying a non-degeneracy condition for 
the win-lose induction, which, by a criterion of Fougeron, implies both 
uniqueness and ergodicity. We note that Fougeron's criterion 
\cite[Theorem~1.1]{fougeronpp} establishes the existence of such a measure 
without providing its explicit form; the present paper complements this 
result by constructing the measure explicitly via the graph method and then 
verifying the non-degeneracy condition to conclude ergodicity. We stress that the proofs in this section are combinatorial, relying only 
on the structure of the underlying simplicial system and requiring no 
analytic input beyond Fougeron's criterion.

For the Selmer algorithm, ergodicity was already established 
in~\cite{fougeronpp}, and we refer the reader to that work. For the 
Triangle algorithm, we prove ergodicity in arbitrary dimension, that is, the invariant measure constructed in the present paper is the unique 
ergodic invariant measure equivalent to the Lebesgue measure; this recovers 
and gives an independent combinatorial proof of a recent result of Garrity 
and Lehmann Duke~\cite[$\mathsection$~6]{garrityerg}, who obtained the same 
measure via an approach based on transfer operators. For the Modified 
Triangle algorithm, we establish ergodicity of the invariant measure in 
arbitrary dimension; we note that in this case an explicit formula for the 
invariant density is not at our disposal.

\begin{definition}\cite[$\mathsection$~3.2.2]{fougeronpp}
    Let $\mathcal{L}\subsetneq \{0,\ldots,d\}$. We  denote by $G_{\mathcal{L}}$ the subgraph of $G$ with the same set of vertices $V$ and a set of edges defined as follows: for any $v\in V$,\\
    \indent $\bullet\;$ if $|l(v_{\text{out}}\cap\mathcal{L})|\neq0$, then the set of outgoing edges for $v$ in $G_{\mathcal{L}}$ is
    $$\displaystyle v^{\mathcal{L}}_{\text{out}} = \{e\in v_{\text{out}}|\;l(e)\in\mathcal{L}\},$$
    \indent $\bullet\;$ otherwise, $\displaystyle v^{\mathcal{L}}_{\text{out}} = v_{\text{out}}.$
\end{definition}

\begin{definition}\label{def:nondeg}\cite[$\mathsection$~3.2.3]{fougeronpp}
    A simplicial system is \textit{non-degenerating} if: \\
    $1)$ from every vertex there exists a path along which each label in $\{0,\ldots,d\}$ appears, \\
    $2)$ for all $\emptyset\subsetneq\mathcal{L}\subsetneq \{0,\ldots,d\}$ and all $\mathcal{C}$ that are strongly connected components of $G_{\mathcal{L}}$,
    one of the following is true: \\
    \indent (a) for all vertices $v$ in $\mathcal{C}$, the cardinality $|l(v_{\text{out}}\cap\mathcal{L})|\leq1$, \\
    \indent (b) from every vertex in $\mathcal{C}$, there is a path in $G$ labeled in $\mathcal{L}$ leaving $\mathcal{C}$.
\end{definition}

\begin{remark}
    The first condition means that every symbol loses infinitely often; the second condition means that every symbol in a degenerated subgraph always loses eventually to a symbol in the complementary set.
\end{remark}

In terms of convergence, the non-degenerating property implies weak convergence of an algorithm. If a simplicial system is non-degenerating, then, during the application of an algorithm, any finite path appears infinitely many times (see [Fou$20$,\;Prop.$3.2$]). Then it is enough to consider a finite path which corresponds to a strictly positive matrix and an acceleration of the algorithm as a first return map on a corresponding subsimplex. In terms of ergodicity, non-degenerating property prove that the win-lose induction admits a unique ergodic measure equivalent to the Lebesgue measure \cite[Thm~1.1, 1.2.]{fougeronpp}.

\begin{proposition}\cite[Prop.~5.10]{fougeronppnew} The simplicial system for the Selmer algorithm in dimension $d$ is non-degenerating.
\end{proposition}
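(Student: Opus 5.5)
We will verify the two conditions of Definition~\ref{def:nondeg} directly on the Selmer simplicial system $G$. It has vertices $v_\sigma$, and each $v_\sigma$ has exactly two outgoing edges: $e^{\mathrm{top}}_\sigma$ labeled $\sigma_0$ going to $v_{\sigma\cdot(0\ldots d-1)}$, and $e^{\mathrm{bot}}_\sigma$ labeled $\sigma_d$ going to $v_{\sigma\cdot(0\ldots d)}$. The first step is to record the combinatorics of these two moves on the permutation $\sigma$, viewed as a word $\sigma_0\sigma_1\ldots\sigma_d$. Composing with the cycle $(0\ldots d)$ rotates the whole word cyclically. Composing with $(0\ldots d-1)$ rotates the first $d$ letters and leaves $\sigma_d$ fixed. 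From this we read off which symbols occupy positions $0$ and $d$ after each move. These are the only positions that ever carry labels, so all subsequent arguments reduce to tracking the orbit of the pair (first letter, last letter) under the two generators.

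For condition~1), we note that iterating the full rotation $(0\ldots d)$, i.e. always taking the bottom edge, cycles the last letter through every symbol. After $d+1$ steps the labels $\sigma_d,\sigma_{d-1},\ldots,\sigma_0$ have all appeared. Hence from every vertex the path of $d+1$ bottom edges sees every label, and condition~1) holds.

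For condition~2), fix $\emptyset\subsetneq\mathcal{L}\subsetneq\{0,\ldots,d\}$ and a strongly connected component $\mathcal{C}$ of $G_{\mathcal{L}}$. Since every vertex has only two outgoing edges, alternative~(a) fails at $v_\sigma$ exactly when both $\sigma_0$ and $\sigma_d$ lie in $\mathcal{L}$. In $G_{\mathcal{L}}$ the admissible moves are:
\begin{itemize}
\item both moves, if $\sigma_0,\sigma_d\in\mathcal{L}$ or $\sigma_0,\sigma_d\notin\mathcal{L}$;
\item only the move whose label lies in $\mathcal{L}$, if exactly one of $\sigma_0,\sigma_d$ lies in $\mathcal{L}$.
\end{itemize}

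Suppose $\mathcal{C}$ contains a vertex with $\sigma_0,\sigma_d\in\mathcal{L}$, so that (a) fails. We must then show (b): from every vertex of $\mathcal{C}$ there is a path in $G$ with all labels in $\mathcal{L}$ that leaves $\mathcal{C}$. The plan is to use the fact that $\mathcal{L}^c\neq\emptyset$. Following edges labeled in $\mathcal{L}$, and mixing top and bottom moves, we will bring a symbol of $\mathcal{L}^c$ to the last position while the first letter stays in $\mathcal{L}$. The resulting vertex has exactly one label in $\mathcal{L}$. We then show that the forced move there leads to a vertex from which one cannot return into $\mathcal{C}$ inside $G_{\mathcal{L}}$, or alternatively that the $\mathcal{L}$-labeled edge from a vertex of $\mathcal{C}$ exits $\mathcal{C}$. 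Concretely, the top move with $\sigma_0\in\mathcal{L}$ brings $\sigma_1$ to the front, and the bottom move with $\sigma_d\in\mathcal{L}$ brings $\sigma_d$ to the front and $\sigma_{d-1}$ to the end. We expect to argue via an invariant of the strongly connected component: the relative cyclic order of the $\mathcal{L}$-letters among positions $0,\ldots,d-1$ is preserved by moves that stay inside $\mathcal{C}$. An $\mathcal{L}$-labeled move that changes this invariant must therefore exit $\mathcal{C}$.

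The main obstacle is exactly this step: identifying the strongly connected components of $G_{\mathcal{L}}$ for arbitrary $\mathcal{L}$ and proving that the exiting path exists from \emph{every} vertex of $\mathcal{C}$, not just from some vertex. If the invariant-based argument becomes unwieldy, the fallback is to invoke \cite[Prop.~5.10]{fougeronppnew} directly, since the statement is quoted from there. Our proof would then amount to checking that the graph described above coincides with Fougeron's Selmer simplicial system, restricted to the centered region $\mathbb{R}_+\cdot\Delta^d_{\text{center}}$ via $\varphi^d_S$.
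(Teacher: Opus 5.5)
\textbf{Condition 2) is not proved.} Your argument for condition~1) is fine. Condition~2), the only nontrivial part, is left open. The invariant you propose is asserted only ``for moves that stay inside $\mathcal{C}$'', which presupposes the description of $\mathcal{C}$ you are trying to obtain. It also fails for general moves of $G_{\mathcal{L}}$. For example, at a vertex with $\sigma_0\notin\mathcal{L}$ and $\sigma_d\in\mathcal{L}$, the forced bottom move inserts the new $\mathcal{L}$-letter $\sigma_d$ into positions $0,\ldots,d-1$. The step ``bring an $\mathcal{L}^c$-symbol to an end, then show one cannot return'' is not carried out, and you fall back on the citation. That fallback is exactly what the paper does: it gives no argument beyond \cite[Prop.~5.10]{fougeronppnew}. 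As a self-contained direct proof, though, the proposal has a gap at condition~2).

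\textbf{The missing idea.} Describe a vertex by the unordered pair $\{\sigma_0,\sigma_d\}$ of its outgoing labels together with the queue $(\sigma_1,\ldots,\sigma_{d-1})$. This is the intrinsic description, because the graph has $(d+1)!/2$ vertices. Indeed, $v_\sigma$ and $v_{\sigma\cdot(0\,d)}$ have the same labeled outgoing edges and the same image $\Lambda^{d+1}_{\sigma\cdot(0\ldots d-1)}\sqcup\Lambda^{d+1}_{\sigma\cdot(0\ldots d)}$ under $\varphi^d_S$. So ``first'' versus ``last'' letter is not intrinsic, though your condition~1) path still works via representatives.

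In this description, following the edge labeled $x\in\{\sigma_0,\sigma_d\}$ sends $x$ to the back of the queue, keeps the other (winning) label in the pair, and brings $\sigma_1$ into the pair. Now look at $G_{\mathcal{L}}$.
\begin{itemize}
\item If the pair contains a label outside $\mathcal{L}$ and the pair is mixed, only the $\mathcal{L}$-label may lose.
\item If both labels lie outside $\mathcal{L}$, the winner is again outside $\mathcal{L}$.
\end{itemize}
In either case the next pair still meets $\{0,\ldots,d\}\setminus\mathcal{L}$. So the set of vertices whose pair meets $\{0,\ldots,d\}\setminus\mathcal{L}$ is forward invariant in $G_{\mathcal{L}}$.

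Conversely, at a vertex whose pair lies in $\mathcal{L}$, the queue contains the whole nonempty set $\{0,\ldots,d\}\setminus\mathcal{L}$. The queue advances by one at each step, so after at most $d-1$ steps a label outside $\mathcal{L}$ enters the pair. Hence a vertex with both outgoing labels in $\mathcal{L}$ lies on no cycle of $G_{\mathcal{L}}$, and its strongly connected component is a singleton $\{v\}$.

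For such a singleton, (b) holds. Either outgoing edge is labeled in $\mathcal{L}$, and it changes the pair, since $\sigma_1\neq\sigma_0,\sigma_d$ for $d\geq 2$. So that edge leaves $\{v\}$. For $d=1$ no vertex has both labels in a proper $\mathcal{L}$. Every other component satisfies (a). This settles condition~2) without analysing the large components at all.
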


\begin{corollary}
    The Selmer algorithm in dimension $d$ admits a unique invariant ergodic measure equivalent to the Lebesgue measure with the density
$$\displaystyle h(\textbf{x}) = \frac{1}{x_0\ldots x_d} \quad\text{for }\textbf{x}\in \mathbb{R}_+\cdot\Delta_c^d.$$
\end{corollary}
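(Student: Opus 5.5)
The plan is to combine three ingredients that are already in place: non-degeneracy of the Selmer simplicial system (the preceding proposition, \cite[Prop.~5.10]{fougeronppnew}), Fougeron's criterion \cite[Thm~1.1, 1.2]{fougeronpp}, and the explicit density from the Selmer proposition of Section~\ref{sec:examples}.

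\textbf{Step 1: a unique equivalent ergodic measure upstairs.} Since the simplicial system $G$ for the Selmer algorithm is non-degenerating, Fougeron's criterion gives that the win-lose induction on $G$ admits a unique ergodic invariant measure equivalent to Lebesgue. I will apply this in its projective form on $V\times\Delta^d$, where the measure is finite, and then pass to the pseudo-homogeneous version on $V\times\mathbb{R}_+^{d+1}$.

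\textbf{Step 2: identify that measure with $\mu_{\widetilde{T}}$.} The measure $\mu_{\widetilde{T}}$ built via the dual graph in Section~\ref{sec:measure} is invariant and pseudo-homogeneous. Its density is given by formula~(\ref{eq:2}) with the matrices $\Omega_v$ found in the Selmer proposition. That density is strictly positive and finite on the interior of every vertex cone, so $\mu_{\widetilde{T}}$ is equivalent to Lebesgue.

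Projecting to the simplex gives $\mu_T$, which is equivalent to Lebesgue and $T$-invariant. It is also finite: for the Selmer density it is integrable over $\Delta^d_{\text{center}}$, because on the centered region the coordinates cannot all degenerate. By uniqueness in Step~1, $\mu_T$ coincides up to a constant with Fougeron's ergodic measure, so $\mu_T$ is ergodic.

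\textbf{Step 3: transport to the algorithm.} The conjugacy $\varphi^d_S$, given by the linear bijections $P_{v_\sigma}$, sends $\widetilde{T}$ to the Selmer map $F$ on $\mathbb{R}_+\cdot\Delta^d_{\text{center}}$. Ergodicity and equivalence to Lebesgue are preserved under pushforward by such a bijection. The pushforward density was computed in the Selmer proposition as $1/(x_0\cdots x_d)$ up to a constant, which yields the stated formula.

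For uniqueness downstairs, let $\nu$ be any ergodic invariant measure for $F$ equivalent to Lebesgue. Pulling it back by the measurable bijection $\varphi^d_S$ gives an ergodic invariant measure for the win-lose induction that is equivalent to Lebesgue. By Step~1 that pullback equals ours up to a constant, hence so does $\nu$.

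\textbf{Main obstacle.} The delicate point is the passage between the homogeneous and projective settings. Fougeron's uniqueness is stated for the projective induction with finite measures. Equivalence to Lebesgue therefore has to be read on the simplex, and ergodicity lifted to pseudo-homogeneous measures on cones via the radial product structure. I would handle this by working throughout on $\Delta^d_{\text{center}}$ with $h_f$, and stating the cone density only as its pseudo-homogeneous extension.
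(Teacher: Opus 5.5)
Your proposal is correct and follows the paper's own route: the corollary combines the non-degeneracy of the Selmer simplicial system (cited from Fougeron) with Fougeron's criterion, the explicit density obtained from the dual graph, and transport through $\varphi^d_S$. There are two small tightenings. First, ergodicity of $\mu_T$ follows directly from $\mu_T\sim\Leb$, because ergodicity of a non-singular map depends only on the measure class, so you should not invoke uniqueness among ergodic measures before you know $\mu_T$ is ergodic. Second, your finiteness remark is loosely justified: the density does blow up on $\overline{\Delta^d_{\text{center}}}$ at the $d+1$ points where one coordinate vanishes, and integrability comes from the region pinching there (slices $\{x_{\min}=t\}$ of $(d-1)$-volume $O(t^{d-1})$ against density $O(1/t)$), which works only for $d\ge 2$; for $d=1$ the map is Farey and the measure is infinite.
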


\begin{proposition}
    The simplicial system for the Triangle algorithm in dimension $d$ is non-degenerating.
\end{proposition}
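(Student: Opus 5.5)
The plan is to verify the two conditions of Definition~\ref{def:nondeg} directly from the structure of the simplicial system $G$ built in Section~\ref{ssec:ss_triangle}. That system is a disjoint union of the $d!$ cycles $C_\sigma$. Since the conditions only concern paths and strongly connected components, and there are no edges between different cycles, it suffices to treat one cycle $C_\sigma$. Write $v_i:=v_{\sigma,\sigma_i}$, with indices taken mod $d+1$. The vertex $v_i$ has exactly two outgoing edges: the loop labeled $\sigma_i$ and the forward edge $v_i\to v_{i+1}$ labeled $\sigma_{i+1}$. In particular $l(v_{i,\text{out}})=\{\sigma_i,\sigma_{i+1}\}$.

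\emph{Condition 1).} From $v_i$, follow the forward edges once around the cycle, $v_i\to v_{i+1}\to\cdots\to v_{i+d+1}=v_i$. The labels read are $\sigma_{i+1},\dots,\sigma_{i+d+1}$, which is every element of $\{0,\dots,d\}$ because $\sigma$ is a permutation.

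\emph{Condition 2).} Fix $\emptyset\subsetneq\mathcal{L}\subsetneq\{0,\dots,d\}$ and determine which edges survive in $G_{\mathcal{L}}$.
\begin{itemize}
\item If $\sigma_i,\sigma_{i+1}\in\mathcal{L}$, both edges stay.
\item If exactly one of $\sigma_i,\sigma_{i+1}$ lies in $\mathcal{L}$, only the edge with that label stays.
\item If neither lies in $\mathcal{L}$, both edges stay.
\end{itemize}
Hence the forward edge $v_i\to v_{i+1}$ is deleted exactly when $\sigma_i\in\mathcal{L}$ and $\sigma_{i+1}\notin\mathcal{L}$.

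Since $\mathcal{L}$ is nonempty and proper, the cyclic sequence $\sigma_0,\dots,\sigma_d$ contains at least one index $i$ with $\sigma_i\in\mathcal{L}$ and $\sigma_{i+1}\notin\mathcal{L}$. So at least one forward edge of $C_\sigma$ is removed. The only cycles in $C_\sigma$ other than loops are the full turn around it, which now no longer exists in $G_{\mathcal{L}}$. Therefore every strongly connected component of $G_{\mathcal{L}}$ is a single vertex $\{v_i\}$, possibly with its loop.

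Now check the alternative (a)/(b) for each such component $\{v_i\}$.
\begin{itemize}
\item If at most one of $\sigma_i,\sigma_{i+1}$ lies in $\mathcal{L}$, then $|l(v_{i,\text{out}})\cap\mathcal{L}|\le 1$, so (a) holds.
\item Otherwise $\sigma_i,\sigma_{i+1}\in\mathcal{L}$. The forward edge $v_i\to v_{i+1}$ of $G$ is labeled $\sigma_{i+1}\in\mathcal{L}$. Its endpoint satisfies $v_{i+1}\neq v_i$, because the cycle has $d+1\ge2$ vertices. This single edge is a path in $G$ labeled in $\mathcal{L}$ leaving the component, so (b) holds.
\end{itemize}

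The only step needing real care is the claim that the strongly connected components are singletons, that is, that some forward edge is always removed. This is where properness and nonemptiness of $\mathcal{L}$ are used: a cyclic sequence containing both labels in $\mathcal{L}$ and labels outside $\mathcal{L}$ must have a transition from inside $\mathcal{L}$ to outside. I would make this explicit. With both conditions verified, the proposition follows.
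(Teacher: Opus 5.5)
Your proof is correct and takes the same route as the paper. You verify condition 1) with one full turn around $C_\sigma$, show that every strongly connected component of $G_{\mathcal{L}}$ is a single vertex, and then check (a) or (b) at each singleton using the forward edge.

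The only difference is cosmetic. The paper gets the singleton claim by contradiction, propagating a label outside $\mathcal{L}$ backwards around the cycle until $\mathcal{L}=\emptyset$. That is the contrapositive of your observation that a cyclic transition from $\mathcal{L}$ to its complement deletes a forward edge.
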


\begin{proof}
    The first condition holds by the construction of $G$: in any $C_{\sigma}$, in a cycle over all vertices, all labels appear. \\
    Consider the second condition. Any strongly connected component $\mathcal{C}$ of $G_{\mathcal{L}}$ is contained in $C_{\sigma}$ for some $\sigma\in S_{d+1}$. It contains either only this vertex or all vertices in $C_{\sigma}$. Suppose the latter is true; then all edges between different vertices in $C_{\sigma}$ are also present in $\mathcal{C}$. Consider such an edge with a label in $\{0\ldots d\} \diagdown \mathcal{L}$; then the loop from the same vertex also has a label in $\{0\ldots d\} \diagdown \mathcal{L}$. Following the incoming edges, we obtain that every label in $C_{\sigma}$ is in $\{0\ldots d\} \diagdown \mathcal{L}$, i.e., $\mathcal{L} = \emptyset$, a contradiction. \\
    Then any $\mathcal{C}$ contains just one vertex and the second condition holds: any vertex in $G$ has exactly $2$ outgoing edges; if $|l(v_{\text{out}}\cap\mathcal{L})| > 1$, then the path along any of these 2 edges is labeled in $\mathcal{L}$ and leaves $\mathcal{C}$.
\end{proof}

\begin{corollary}
    The Triangle algorithm in dimension $d$ admits a unique invariant ergodic measure equivalent to the Lebesgue measure with the density
    $$\displaystyle h(\textbf{x})=\frac{1}{x_0\ldots x_d}.$$
\end{corollary}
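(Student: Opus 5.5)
The corollary follows by combining the preceding proposition (non-degeneracy of the Triangle simplicial system) with Fougeron's criterion \cite[Thm~1.1, 1.2]{fougeronpp} and Proposition~\ref{prop:desnity_triangle}.

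First, by the preceding proposition and Fougeron's criterion, the linear win-lose induction $\widetilde{T}$ on $G$ admits a unique ergodic invariant measure equivalent to the Lebesgue measure on $V\times\mathbb{R}_+^{d+1}$. Equivalently, this holds for the projective induction $T$ on $V\times\Delta^d$; we work up to the pseudo-homogeneous identification of Section~\ref{sec:measure}. On the other hand, Section~\ref{sec:measure} together with the explicit solution $(\Omega_v)$ found in the proof of Proposition~\ref{prop:desnity_triangle} produces an invariant measure $\mu_{\widetilde{T}}$ with density $h_{\widetilde{T}}$ given by~(\ref{eq:2}). This density is strictly positive and finite on the interior of each cone, since the $\Omega_v$ are nonnegative with no zero column. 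Hence $\mu_{\widetilde{T}}$ is equivalent to Lebesgue measure, and by uniqueness it is the measure provided by Fougeron's theorem. In particular, it is ergodic.

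Second, I transport the result through the conjugacy $\varphi^d_{\text{Tr}}$. Its restrictions $P_{v_{\sigma,\sigma_i}}$ are unimodular linear bijections onto the cones $\Lambda^{d+1}_{\sigma\cdot(\sigma_{i-d}\ldots\sigma_i)}$, which partition $\mathbb{R}^{d+1}_+$ up to measure zero. Therefore $\varphi^d_{\text{Tr}}$ is a measurable isomorphism, it maps Lebesgue-equivalent measures to Lebesgue-equivalent measures, and it preserves ergodicity and the uniqueness statement. The pushforward density was computed in Proposition~\ref{prop:desnity_triangle} to be $1/(x_0\cdots x_d)$, up to the constant $1/(d+1)!$, which is irrelevant for the statement. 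For the accelerated version, the same argument applies via the first return map to $D$: an induced map of an ergodic conservative system on a positive-measure set is ergodic, with the restricted measure.

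The main subtlety is the passage between the linear and projective settings and the meaning of "unique". On the cone, the relevant measures are infinite and pseudo-homogeneous, so uniqueness has to be understood up to a scalar multiple and among pseudo-homogeneous measures, matching exactly the form in which Fougeron states his theorem for the projective induction. I would state the result for the projective Triangle map $f$ on $\Delta^d$. Its density is the restriction $h_T(\mathbf{x}) = h_{\widetilde{T}}(\mathbf{x})$ with $\mathbf{x}\in\Delta^d$, as derived at the end of Section~\ref{sec:measure}. The uniqueness there follows from Fougeron's theorem on $V\times\Delta^d$ via the projective conjugacy, which is induced by the same matrices. This in turn identifies the linear density $1/(x_0\cdots x_d)$ up to a constant.
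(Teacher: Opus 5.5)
Your route is the paper's route: non-degeneracy of the Triangle simplicial system, then Fougeron's criterion, then the explicit density pushed through $\varphi^d_{\text{Tr}}$. However, your first step is false for $d\geq 2$.

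\textbf{The gap.} The graph $G$ is not connected. It is the disjoint union of the $d!$ cycles $C_\sigma$, and no edge leaves a cycle. So each $C_\sigma\times\mathbb{R}_+^{d+1}$ is a $\widetilde{T}$-invariant set of positive Lebesgue measure whose complement also has positive measure. Two consequences follow:
\begin{itemize}
\item no measure equivalent to Lebesgue on all of $V\times\mathbb{R}_+^{d+1}$ (or $V\times\Delta^d$) is ergodic;
\item uniqueness fails, since the density can be rescaled independently on each cycle.
\end{itemize}
Hence your claims that ``$\widetilde{T}$ admits a unique ergodic invariant measure equivalent to the Lebesgue measure on $V\times\mathbb{R}_+^{d+1}$'' and ``by uniqueness it is the measure provided by Fougeron's theorem, in particular it is ergodic'' do not hold. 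Transporting them through $\varphi^d_{\text{Tr}}$ gives a false statement about $F$. Indeed, $\varphi^d_{\text{Tr}}(C_\sigma\times\mathbb{R}_+^{d+1})=\bigsqcup_i\Lambda^{d+1}_{\sigma\cdot(\sigma_{i-d}\ldots\sigma_i)}$ is the set of vectors whose coordinates are in the cyclic order of $\sigma$. The algorithm preserves cyclic order: an accelerated step turns the order $(\sigma_0,\sigma_1,\ldots,\sigma_d)$ into $(\sigma_1,\ldots,\sigma_d,\sigma_0)$. For $d=2$ you can check this on the listed matrices: the three vertices of $C_{(012)}$ are sent onto the cones of the orders $(0,1,2)$, $(1,2,0)$, $(2,0,1)$.

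\textbf{The fix.} Work one component at a time. This is also how the paper's ingredients are set up: the non-degeneracy proof and the computation of $(\Omega_v)$ both take place inside a single strongly connected cycle $C_\sigma$.
\begin{enumerate}
\item Apply Fougeron's criterion to $C_\sigma$.
\item Use your positivity argument to see that $h_{\widetilde{T}}$ restricted to $C_\sigma$ is equivalent to Lebesgue there. Conclude it is the unique (up to a scalar) ergodic invariant measure on $C_\sigma\times\mathbb{R}_+^{d+1}$.
\item Push it to the cyclic-order class of $\sigma$, where it equals $1/\bigl((d+1)!\,x_0\cdots x_d\bigr)$.
\item Coordinate permutations commute with $F$, act transitively on the $d!$ classes, and preserve $1/(x_0\cdots x_d)$. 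So the same holds on every class.
\end{enumerate}
The corollary is then correct in the following form: $1/(x_0\cdots x_d)$ is invariant on $\mathbb{R}_+^{d+1}$, and its restriction to each cyclic-order class is ergodic and is the unique invariant measure equivalent to Lebesgue on that class. Equivalently, the statement holds as written for the ordered algorithm, which is the quotient by this symmetry.

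The rest of your argument is fine: the unimodular bijections $P_v$ onto the cones, the linear/projective passage, and uniqueness up to scalars for these infinite measures. Your remark on the accelerated version belongs to the next corollary.
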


\begin{corollary}
    The accelerated version of the Triangle algorithm in dimension $d$ admits a unique invariant ergodic measure equivalent to the Lebesgue measure with the density
$$\displaystyle h(\textbf{x})=\frac{1}{(x_{\sigma_0}+x_{\sigma_d})x_{\sigma_0}\ldots x_{\sigma_{d-1}}}\quad\text{for }\textbf{x}\in\Lambda^{d+1}_{\sigma}.$$
\end{corollary}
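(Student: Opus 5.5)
The corollary for the accelerated Triangle algorithm follows from the non-degeneracy just established, together with the explicit density from Proposition~\ref{prop:desnity_triangle}. The accelerated algorithm $F_{\text{acc}}$ is conjugated to the first return map $\widetilde{T}_D$ of the linear win-lose induction to the set $D=\bigsqcup_{\sigma}D_{\sigma}$, via the bijection $\pi^d_{\text{Tr}_{\text{acc}}}$ built in Section~\ref{ssec:ss_triangle}. The plan is in three steps.

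\emph{Step 1: pass from the full induction to the first return map.} By the preceding proposition, the simplicial system for the Triangle algorithm is non-degenerating. Fougeron's criterion \cite[Thm~1.1, 1.2]{fougeronpp} then gives that the projective win-lose induction $T$ has a unique ergodic invariant measure equivalent to Lebesgue. The corresponding statement for $\widetilde{T}$ is meant in the pseudo-homogeneous sense, i.e. projectively. The set $D$ (or its projectivization) has positive Lebesgue measure in every vertex, since each piece $\mathbb{R}_+\cdot\Delta^d_{\sigma_{i+1}(\sigma_i)}$ is an open subcone. By ergodicity and conservativity, almost every orbit visits $D$, so the first return map is well defined almost everywhere.

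\emph{Step 2: transfer uniqueness and ergodicity.} I would invoke the standard correspondence between invariant measures of a map and of its first return map to a set that is visited almost surely:
\begin{itemize}
\item restricting an ergodic invariant measure $\mu$ to $D$ gives an ergodic $\widetilde{T}_D$-invariant measure;
\item conversely, any $\widetilde{T}_D$-invariant measure $\nu$ equivalent to Lebesgue on $D$ extends by the Kac tower construction to a $\widetilde{T}$-invariant measure equivalent to Lebesgue, because the orbit of $D$ covers everything up to a null set.
\end{itemize}
Uniqueness for $\widetilde{T}$ then forces uniqueness for $\widetilde{T}_D$. Pushing forward by the bijection $\pi^d_{\text{Tr}_{\text{acc}}}$, whose pieces are linear and invertible, preserves both equivalence to Lebesgue and ergodicity.

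\emph{Step 3: identify the density.} The measure with density $h_{\text{Tr}_{\text{acc}}}$ from Proposition~\ref{prop:desnity_triangle} is invariant and equivalent to Lebesgue, since the density is positive and finite on each $\Lambda^{d+1}_\sigma$. By the uniqueness in Step~2, it is therefore the unique ergodic one, up to normalization.

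\emph{Main obstacle.} The delicate point is the infinite-measure and homogeneous setting. The density is not integrable on the cone, so uniqueness has to be read projectively, with measures determined up to scaling. The Kac extension step also needs care, since the return time is unbounded because of the loops. I would handle this by working throughout on the projective simplices, where Fougeron's theorem is stated. There I would check that $\mu(D)>0$ and that the tower over $D$ reconstructs $\mu$; this uses only conservativity, which follows from ergodicity together with equivalence to Lebesgue.
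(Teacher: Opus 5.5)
Your route is the one the paper intends. The paper states this corollary without a separate proof, as a consequence of the non-degeneracy proposition, Fougeron's criterion and the density from Proposition~\ref{prop:desnity_triangle}. Your Step~2 usefully spells out the inducing argument (first return to $D$, tower construction) that the paper leaves implicit, and that step is fine. You do not even need ergodicity to see that $D$ is visited. At $v_{\sigma,\sigma_i}$ the loop subtracts the fixed positive coordinate $x_{\sigma_i}$ from $x_{\sigma_{i+1}}$, so every orbit off a null set enters $D$ after finitely many loops.

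There is, however, a genuine gap in Step~1, and the paper's own argument has the same gap. The simplicial system for the Triangle algorithm is not strongly connected: it is the disjoint union of the $d!$ cycles $C_\sigma$. The set of points lying over the vertices of any one $C_\sigma$ is $T$-invariant. Hence, for $d\geq 2$, no measure equivalent to Lebesgue on $V\times\Delta^d$ can be ergodic. Uniqueness also fails, since reweighting the restrictions to different cycles gives non-proportional invariant measures equivalent to Lebesgue. Fougeron's criterion can therefore only be applied to each strongly connected component, not to $T$ as a whole.

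The same thing is visible directly on the algorithm. For $\mathbf{x}\in\Lambda^{d+1}_\sigma$ the new coordinate $x_{\sigma_0}-x_{\sigma_1}-kx_{\sigma_d}$ lies in $(0,x_{\sigma_d})$, so $F_{\text{acc}}(\Lambda^{d+1}_\sigma)\subset\Lambda^{d+1}_{(\sigma_1,\ldots,\sigma_d,\sigma_0)}$. Consequently the union of the $d+1$ cones indexed by the cyclic shifts of $\sigma$ is invariant. As written, your Steps~1--3 establish only that $h$ is invariant on $\mathbb{R}^{d+1}_+$. The uniqueness and ergodicity claim is false on the full cone when $d\geq 2$.

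The repair is to run your three steps on a single cycle $C_\sigma$ with $D_\sigma$; the paper's non-degeneracy proof already works component by component. This shows that $h$, restricted to the union of cones over one cyclic class, is the unique ergodic invariant density equivalent to Lebesgue there. Equivalently, the statement holds for the ordered version of the algorithm. By the permutation symmetry, the formula is the same on every class.
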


\begin{remark}
    The ergodicity of the slow and accelerated Triangle algorithms was proved recently in \cite[$\mathsection$~6]{garrityerg}.
\end{remark}

\begin{proposition}
    The simplicial system for the Modified Triangle algorithm in dimension $d$ is non-degenerating.
\end{proposition}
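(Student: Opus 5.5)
We check the two conditions of Definition~\ref{def:nondeg} directly on the graph $G$ of Fig.~\ref{fig:ss_mtr}, in the same spirit as for the Triangle algorithm. Every vertex has exactly two outgoing edges. At $v_\sigma$ they are labeled $\sigma_0$ and $\sigma_d$. At $\text{int}^i_\sigma$ they are labeled $\sigma_0$ and $\sigma_i$. At $\text{int}^1_\sigma$ they are labeled $\sigma_0$ (exit) and $\sigma_d$ (loop).

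\emph{Condition 1.} From $v_\sigma$ we follow the horizontal path $v_\sigma\to\text{int}^{d-1}_\sigma\to\cdots\to\text{int}^1_\sigma$ and then exit with label $\sigma_0$. Along this path the labels $\sigma_d,\sigma_{d-1},\ldots,\sigma_2$ and $\sigma_0$ appear. The label $\sigma_1$ still has to lose. Exiting from $\text{int}^1_\sigma$ leads to a vertex $v_{\sigma\cdot(0\ldots d)}$ whose ordering has shifted, so $\sigma_1$ occupies a new position there. Iterating the descent from this new vertex, $\sigma_1$ eventually sits in a position $\sigma'_j$ with $j\ge 2$, where it is a label on the horizontal path. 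So from any $v_\sigma$ there is a path on which all labels lose. From an intermediate vertex we first exit to some $v_{\sigma'}$ and then apply the same argument. I will write this as a short induction on the cyclic shift of positions, checking which label sits at index $1$ after each exit.

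\emph{Condition 2.} Fix $\emptyset\subsetneq\mathcal{L}\subsetneq\{0,\ldots,d\}$ and a strongly connected component $\mathcal{C}$ of $G_{\mathcal{L}}$. Since out-degree is $2$, alternative (a) fails only at vertices where both outgoing labels lie in $\mathcal{L}$. At such a vertex both edges are kept in $G_{\mathcal{L}}$. My plan is to show that (b) holds at every vertex of $\mathcal{C}$ whenever (a) fails, by the following mechanism. Suppose some vertex $w\in\mathcal{C}$ has both labels in $\mathcal{L}$. One of its two edges leaves $\mathcal{C}$, or else $\mathcal{C}$ contains a cycle using both. In the first case, every vertex of $\mathcal{C}$ reaches $w$ inside $\mathcal{C}$. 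The step to watch is that the path to $w$ in $G$ must be labeled in $\mathcal{L}$. Edges in $G_{\mathcal{L}}$ out of a vertex with some label in $\mathcal{L}$ carry labels in $\mathcal{L}$, but vertices with no label in $\mathcal{L}$ keep both edges, and their labels lie outside $\mathcal{L}$. I therefore first show that such vertices cannot lie on a cycle of $\mathcal{C}$ together with $w$. The argument mimics the Triangle proof: following edges with labels outside $\mathcal{L}$ backwards along the horizontal chain forces more and more labels outside $\mathcal{L}$, eventually all of them, contradicting $\mathcal{L}\neq\emptyset$.

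\emph{Main obstacle.} The hard part is classifying the strongly connected components of $G_{\mathcal{L}}$. Unlike the Triangle case, $G$ is not a disjoint union of cycles. The exits $\sigma_0$ connect different permutations, and the loop at $\text{int}^1_\sigma$ is a one-vertex component whenever $\sigma_d\in\mathcal{L}$. I will treat that loop component separately. If $\sigma_0\in\mathcal{L}$ as well, then (a) fails and the exit edge labeled $\sigma_0$ leaves $\mathcal{C}$, so (b) holds. Otherwise only the loop is labeled in $\mathcal{L}$, so (a) holds. For larger components I expect to use the fact that the label $\sigma_0$ is common to all exits from the chain of $\sigma$. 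This lets me track the largest coordinate $\sigma_0$ under the cyclic permutation $(0\ldots i)$ and show that any cycle through several chains forces $\mathcal{L}$ to be everything or nothing. That would leave only components inside a single chain, where the chain argument above applies.
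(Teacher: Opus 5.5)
Your Condition~1 argument is fine; the direct exits $v_\sigma\to v_{\sigma\cdot(0\ldots d)}\to v_{\sigma\cdot(0\ldots d)^2}\to\cdots$ already make every label lose. The gap is in Condition~2.

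\textbf{The reduction lemma is false for $d\ge 3$.} You claim that a cycle of $G_{\mathcal{L}}$ through several chains forces $\mathcal{L}$ to be empty or everything, so that only single-chain components remain. Take $d=3$, $\mathcal{L}=\{3\}$ and $\tau\in\{(0,1,2,3),(1,2,0,3),(2,0,1,3)\}$; this set is stable under $\tau\mapsto\tau\cdot(012)$.
\begin{itemize}
\item At $v_\tau$ the labels are $\tau_0\notin\mathcal{L}$ and $\tau_3=3\in\mathcal{L}$, so the edge to $\text{int}^2_\tau$ survives in $G_{\mathcal{L}}$.
\item At $\text{int}^2_\tau$ both labels $\tau_0,\tau_2$ lie outside $\mathcal{L}$, so both edges survive, in particular the exit to $v_{\tau\cdot(012)}$.
\end{itemize}
These six edges form a cycle of $G_{\mathcal{L}}$ through three different chains. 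The same construction works whenever $\{0,\ldots,d\}\setminus\mathcal{L}$ has at least three elements. Here (a) happens to hold, so the proposition is not threatened, but your classification of components is.

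The example also refutes the backward-propagation mechanism you borrow from the Triangle proof. Edges labeled $0,1,2\notin\mathcal{L}$ lie on the cycle without forcing $3$ out of $\mathcal{L}$, because all vertices of a chain share the exit label $\sigma_0$. So your ``second case'' (both edges of $w$ inside $\mathcal{C}$) is left unhandled.

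\textbf{The missing idea is a monotone quantity.} This is the quantity $m(\sigma,\mathcal{L})$ behind the paper's proof: the length of the maximal terminal block $\sigma_{d-m+1},\ldots,\sigma_d$ of labels in $\mathcal{L}$. Leaving the chain of $\sigma$ along $G_{\mathcal{L}}$ means moving to $v_{\sigma\cdot(0\ldots i)}$ with $2\le i\le d$. That is, $\sigma_0$ is moved to position $i$ and $\sigma_1,\ldots,\sigma_i$ shift down by one.
\begin{itemize}
\item If $\sigma_0\notin\mathcal{L}$, the surviving exits are exactly those with $\sigma_i\notin\mathcal{L}$, and $m$ is unchanged.
\item If $\sigma_0\in\mathcal{L}$, one can only reach exits with $\sigma_{i+1},\ldots,\sigma_d\in\mathcal{L}$. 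So $\sigma_0$ joins the terminal block and $m$ strictly increases.
\end{itemize}
Chains are entered only at $v_\sigma$ and contain no cycle except the loop at $\text{int}^1_\sigma$. Hence no cycle of length at least two passes through a chain with $\sigma_0\in\mathcal{L}$. Two consequences finish the proof:
\begin{itemize}
\item A vertex where (a) fails has $\sigma_0\in\mathcal{L}$, so it is its own strongly connected component. Any of its non-loop edges is labeled in $\mathcal{L}$ and gives (b).
\item Every component with at least two vertices consists of vertices whose exit label lies outside $\mathcal{L}$, so (a) holds there. The remaining singletons satisfy (a) trivially.
\end{itemize}
So multi-chain components exist; what you need to show is that they are harmless, not that they are absent.
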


\begin{proof}
    We verify the conditions of Definition~\ref{def:nondeg}.\\
    Condition~$1$ holds by construction of $G$: for any vertex $v_{\sigma}$, in a cycle over vertices
    $$\displaystyle v_{\sigma}, v_{\sigma\cdot(01\ldots d)}, v_{\sigma\cdot(01\ldots d)^2},\ldots,v_{\sigma\cdot(01\ldots d)^d},v_{\sigma},$$
    all labels $\sigma_0,\sigma_1,\ldots,\sigma_d$ appear. \\
    Consider Condition~$2$. Suppose that the vertex $v_{\sigma}$ is in the strongly connected component $\mathcal{C}$ of $G_{\mathcal{L}}$ and consider the subgraph $G_{v_{\sigma}}$ corresponding to $v_{\sigma}$; see Fig.~\ref{fig:ss_mtr}. If $\sigma_0 \in \{0\ldots d\} \diagdown \mathcal{L}$, Condition~$2$(a) holds for all vertices
    $$\displaystyle v_{\sigma},\text{int}_{\sigma}^{1},\text{int}_{\sigma}^{2},\ldots,\text{int}_{\sigma}^{d-1}$$
    in $G_{v_{\sigma}}$ (since each of them has $2$ outgoing edges and one is labeled by $\sigma_0$).\\
    Suppose $\sigma_0 \in \mathcal{L}$. Consider the set
    $$\displaystyle m(\sigma, \mathcal{L}) = \text{max}\big\{i|\;\sigma([d-i+1,\ldots,d]) \subset \mathcal{L}\big\},\quad\quad 0 \leq m(\sigma,\mathcal{L}) \leq d-1,$$
    where $m(\sigma,\mathcal{L}) = 0$ if $\sigma_d \notin \mathcal{L}$. \\
    If $m(\sigma,\mathcal{L}) = d-1$, consider the path
    $$\displaystyle v_{\sigma}, \text{int}^{d-1}_{\sigma}, \text{int}^{d-2}_{\sigma}, \ldots, \text{int}^2_{\sigma}, v_{\sigma\cdot(012)},$$
    whose edges have labels
    $$\displaystyle \sigma_d,\sigma_{d-1},\ldots,\sigma_3,\sigma_0,$$
    all in $\mathcal{L}$. This path exits $\mathcal{C}$ since in $G_{\sigma\cdot(012)}$ all the edges labeled by $\sigma_1$ are deleted, and then there is no path in $G_{\mathcal{L}}$ from the vertex $v_{\sigma\cdot(012)}$ to any other non-intermediate vertex. Hence Condition~$2$(b) holds. \\
    If $m = m(\sigma,\mathcal{L}) < d-1$, consider the path
    $$\displaystyle v_{\sigma}, \text{int}^{d-1}_{\sigma}, \text{int}^{d-2}_{\sigma}, \ldots, \text{int}^{d-m}_{\sigma}, v_{\sigma\cdot(01\ldots d-m)},$$
    whose edges have labels
    $$\displaystyle \sigma_d,\sigma_{d-1},\ldots,\sigma_{d-m+1},\sigma_0,$$
    all in $\mathcal{L}$. This path exits $\mathcal{C}$ since $\mathcal{L} \neq \{0,1,\ldots, d\}$, but any path from $v_{\sigma}$ in $G_{\mathcal{L}}$ could end only at $v_{\sigma'}$ such that if $\sigma'_i = \sigma_0$ for some $i$, then $\sigma'_j \in \mathcal{L}$ for all $j > i$. Hence Condition~$2$(b) holds again.
\end{proof}

\begin{corollary}
    The Modified Triangle algorithm in any dimension $d$ admits a unique invariant ergodic measure equivalent to the Lebesgue measure.
\end{corollary}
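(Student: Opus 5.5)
The plan is to derive the corollary from the preceding proposition, which shows that the simplicial system $G$ of Section~\ref{sec:fractdomain} for the Modified Triangle algorithm is non-degenerating, combined with Fougeron's criterion \cite[Thm~1.1, 1.2]{fougeronpp}. That criterion gives the following: for a non-degenerating simplicial system, the projective win-lose induction $T$ on $V\times\Delta^d$ admits a unique ergodic invariant measure $\mu$ equivalent to the Lebesgue measure. So the first step is simply to apply this criterion to $G$.

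The second step transfers the statement from $T$ to its first return map $T_{V_{\text{perm}}}$ on $V_{\text{perm}}\times\Delta^d$. Condition~1 of Definition~\ref{def:nondeg}, together with [Fou$20$, Prop.~3.2], implies that $\mu$-almost every orbit visits $V_{\text{perm}}\times\Delta^d$ infinitely often; this set has positive $\mu$-measure because $\mu$ is equivalent to Lebesgue. Standard inducing facts then apply. The restriction $\mu|_{V_{\text{perm}}\times\Delta^d}$, after normalisation, is invariant and ergodic for $T_{V_{\text{perm}}}$. Conversely, any $T_{V_{\text{perm}}}$-invariant measure $\nu\ll\Leb$ extends to a $T$-invariant measure by the usual tower construction, summing $T^k_*\nu$ restricted to $\{n>k\}$. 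Absolute continuity is preserved because each branch $T_e$ is a projective linear bijection.

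There is one point to check. We need $\int n\,d\nu<\infty$, or we must allow $\sigma$-finite measures. Since the loop at $\text{int}^1_\sigma$ makes return times unbounded, I would phrase uniqueness inside the class of measures equivalent to Lebesgue, where Fougeron's uniqueness statement already covers $\sigma$-finite measures. Then two distinct equivalent ergodic measures for the induced map would give two distinct ones for $T$, a contradiction.

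Finally, I would push everything forward by the conjugacy $\varphi^d_{\text{MTr}}$, which is given by the same matrices $P_{v_\sigma}$ as in Brun's case. Each $P_{v_\sigma}$ is a unimodular linear map sending $\{v_\sigma\}\times\mathbb{R}_+^{d+1}$ onto $\Lambda^{d+1}_\sigma$, and the pieces $\Lambda^{d+1}_\sigma$ partition $\mathbb{R}_+^{d+1}$ up to a null set. So $\varphi^d_{\text{MTr}}$ is a bijection mod $0$ that preserves Lebesgue-null sets, also after projectivisation. Conjugacy preserves invariance, ergodicity, equivalence to Lebesgue and uniqueness, which yields the claim for the Modified Triangle algorithm $f$.

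The main obstacle is the inducing step, specifically making sure Fougeron's theorem is applied at the level where finiteness versus $\sigma$-finiteness is handled correctly given the unbounded loops at $\text{int}^1_\sigma$. If needed, I would instead apply the criterion directly to a first return on a subsimplex given by a positive path, as in the remark following Definition~\ref{def:nondeg}.
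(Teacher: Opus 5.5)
Your proposal is correct and follows the paper's route: the corollary comes from the preceding non-degeneracy proposition combined with Fougeron's criterion \cite[Thm~1.1, 1.2]{fougeronpp}, and your inducing step to $V_{\text{perm}}\times\Delta^d$ and push-forward by $\varphi^d_{\text{MTr}}$ make explicit a transfer that the paper leaves implicit. The only slip is ``after normalisation'', which presupposes $\mu(V_{\text{perm}}\times\Delta^d)<\infty$, and that is not established; simply drop it, since inducing works for conservative ergodic $\sigma$-finite measures, and note that the return to $V_{\text{perm}}$ follows directly from the graph (for almost every point the loop at $\text{int}^1_\sigma$ is left after finitely many steps), so Prop.~3.2 is not needed.
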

\medskip

\printbibliography
\end{document}